\newcommand{\Be}{\color{blue}}
\newcommand{\Ab}{\mathbf A}
\newcommand{\nb}{\mathbf n}
\newcommand{\App}{\Ab_{{\rm app},\nu}}
\newcommand{\Ap}{\Ab_{\rm app}}
\newcommand{\Fb}{\mathbf F}
\newcommand{\R}{\mathbb R}
\newcommand{\E}{\mathrm{E}_{\rm gs}(\kappa,H)}
\newcommand{\Er}{\mathrm{E}_{\rm gs,r}}
\newcommand{\er}{\mathfrak{e}_{\rm gs}}
\newcommand{\be}{\mathfrak{b}}
\newcommand{\n}{\mathfrak{n}}
\DeclareMathOperator{\curl}{curl}
\newtheorem{thm}{Theorem}[section]
\newtheorem{prop}[thm]{Proposition}
\newtheorem{corol}[thm]{Corollary}
\newtheorem{conjecture}[thm]{Conjecture}
\theoremstyle{remark}
\newtheorem{rem}[thm]{Remark}
\numberwithin{equation}{section}
\title[Ginzburg-Landau with vanishing magnetic field]{The Ginzburg-Landau functional with vanishing magnetic field}
\author{Bernard Helffer}
\author{Ayman Kachmar}
\address[B. Helffer]{Laboratoire de Math\'ematiques, Universit\'e de Paris-Sud 11, B\^at 425, 91405 Orsay, France and  Laboratoire Jean Leray (Universit\'e de Nantes)}
\email{bernard.helffer@math.u-psud.fr}
\address[A. Kachmar]{Department of Mathematics, Lebanese University, Hadat, Lebanon; School of Arts and Sciences, Lebanese International University, Beirut, Lebanon}
\email{ayman.kashmar@liu.edu.lb}
\date{\today}
\begin{document}

\begin{abstract}
We study the infimum of the  Ginzburg-Landau functional in the case
of a vanishing external magnetic field in a two dimensional simply
connected domain. We obtain an energy asymptotics which is valid when the
Ginzburg-Landau parameter is large and  the strength of the external
field is comparable with the  third critical field. Compared with
the known results  when the external magnetic field does not vanish, we show in this regime a
 concentration of the energy  near the zero set of the external magnetic field.
Our results complete former results obtained by K. Attar and X-B.
Pan--K-H.~Kwek.
\end{abstract}

\maketitle 

\section{Introduction}\label{hc2-sec:int}

In a two dimensional  bounded and simply connected domain
$\Omega$ with smooth boundary, the Ginzburg-Landau functional is
defined over configurations $(\psi,\Ab)\in H^1(\Omega;\mathbb
C)\times H^1(\Omega;\mathbb R^2)$ by,
\begin{equation}\label{eq-3D-GLf}
\mathcal E(\psi,\Ab)=\int_\Omega e_{\kappa,H}(\psi,\Ab)\,dx
\end{equation}
where
$$
e_{\kappa,H}(\psi,\Ab):= |(\nabla-i\kappa
H\Ab)\psi|^2-\kappa^2|\psi|^2+\frac{\kappa^2}2|\psi|^4+(\kappa
H)^2|\curl\Ab-B_0|^2\,.
$$

The modulus of the wave function function $\psi$ measures the
density of the superconducting electrons; the curl of the vector
field $\Ab$ measures the induced magnetic field; the parameter $H$
measures the intensity of the external magnetic field and the
parameter $\kappa$ ($\kappa >0$)  is a characteristic of the superconducting
material;  $dx$ is the Lebesgue measure $dx_1\,dx_2$. The function
$B_0$ represents the profile of the external magnetic field  in
$\Omega$ and is allowed to vanish non-degenerately on a smooth
curve. We suppose that $B_0$  is defined and $C^\infty$  in a
neighborhood of $\overline{\Omega}$ and  satisfies,
\begin{equation}\label{eq:B0}
|B_0|+|\nabla B_0|\geq c>0 \quad{\rm
in~}\overline{\Omega}\,,\end{equation} and that the set
\begin{equation}\label{eq:Gamma}
\Gamma=\{x\in\overline{\Omega}~:~B_0(x)=0\}\end{equation} consists
of a finite number of simple smooth curves.\\
We also assume that:
\begin{equation}\label{eq:ass:Gam}
\Gamma\cap\partial\Omega~{\rm~
is~a~finite~set}\,.
\end{equation}
The ground state energy of the functional is,
\begin{equation}\label{eq-gse}
\E=\inf\{\mathcal E(\psi,\Ab)~:~(\psi,\Ab)\in H^1(\Omega;\mathbb
C)\times H^1(\Omega;\mathbb R^2)\}\,.
\end{equation}
We focus on the regime where $H$ satisfies
\begin{equation}\label{eq:ass:H}
H= \sigma \kappa^2\,,\quad \sigma \in\,(0,\infty) \,.\end{equation}

Our results allow for $\sigma$ to be a function of $\kappa$
satisfying $\sigma \gg \kappa^{-1}$.  Earlier results corresponding
to vanishing magnetic fields have been  obtained recently in
\cite{Att, Att2}. The assumption on the  strength of the magnetic
field was  $H \leq C \kappa$, where $C$ is a constant. In the regime
of large $\kappa$, K. Attar has obtained in \cite{Att, Att2}
parallel results to those known for the constant magnetic field in
\cite{SS02}. However, it is proved in \cite{Att} that if
\begin{equation}\label{eq-as:attar}
H= b \kappa\,,\end{equation} and $b$ is a constant, then when $b $
is large enough, the energy and the superconducting density are
concentrated near the set $\Gamma$ with a length scale $\frac1 b$.
Essentially, that is a consequence of the following asymptotics of
the energy ($\kappa\to \infty$),
\begin{equation}\label{eq-2D-thm}
 \E= \kappa^{2}\int_{\Omega}  g \left(\frac{H}{\kappa}|B_{0}(x)|\right)\,dx+\textit{o}\left(\kappa H\left|\ln\frac{\kappa}{H}\right|+1\right)\,,
\end{equation}
which is valid under the relaxed assumption that
\begin{equation}\label{hypatt}
\Lambda_1\kappa^{1/3}
\leq H\leq \Lambda_2\kappa\,,
\end{equation}
$\Lambda_1$ and $\Lambda_2$ being positive constants. In particular,
the assumption in \eqref{hypatt}  covers  the situation in
\eqref{eq-as:attar}. The function $g(b)$ appears in the analysis of
the two and three dimensional Ginzburg-Landau functional with
constant magnetic field, \cite{SS02, FK-cpde}. It is associated with
some effective model energy. The function $g(b)$ will play a central
role in this paper and its definition will be recalled later in this
text (see \eqref{eq:g}).

One purpose of this paper is  to give a precise description of the
aforementioned concentration of the order parameter and the energy
when $\sigma\gg 1$, thereby leading to the assumption in
\eqref{eq:ass:H}.

The leading order term of the ground state energy in \eqref{eq-gse}
is expressed via the quantity $E(\cdot)$ introduced in
Theorem~\ref{thm-FK} below. The function $(0,\infty)\ni L\mapsto
E(L)$ is a continuous function satisfying the following properties:
\begin{itemize}
\item $E(L)$ is defined via a reduced Ginzburg-Landau energy in the
strip  (this energy is introduced in \eqref{eq-gs-er''}).
\item $E(L)=0$ iff $L\geq \lambda_0^{-3/2}\,$, where $\lambda_0$ is a
universal constant defined as the bottom of the spectrum of a
Montgomery operator, see \eqref{eq-lambda0}.
\item As $L\to 0_+$, the expected asymptotic behavior of $E(L)$ is
like $L^{-4/3}\,$.
\end{itemize}

 Through this text, we use the following notation. If $A$ and
$B$ are two positive quantities, then
\begin{itemize}
\item $A\ll B$ means  $A=\delta(\kappa)B$ and
$\delta(\kappa)\to0$ as $\kappa\to\infty$\,;
\item $A\lesssim B$ means  $A\leq CB$  and $C>0$ is a
constant independent of $\kappa$\,;
\item  $A\gg B$ means $B\ll A$, and $A\gtrsim B$ means $B\lesssim A$\,;
\item $A\approx B$ means $c_1B\leq A\leq c_2 B$, $c_1>0$ and $c_2>0$ are constants independent of $\kappa$\,.
\end{itemize}

The main result in this paper is:
\begin{thm}\label{thm:HK}
Suppose that the function $B_0$ satisfies Assumptions
\eqref{eq:B0} and \eqref{eq:Gamma}. Let $b:\R_+\to\R_+$ be a
function satisfying
\begin{equation}\label{hypb}
\lim_{\kappa\to \infty}b(\kappa)= \infty\quad{\rm and}\quad
 \limsup_{\kappa\to \infty} \kappa^{-1}b(\kappa)<\infty \,.
 \end{equation}
%
Suppose that  $$H =  b(\kappa)\kappa\,.$$
 As $\kappa\to\infty$,
the ground state energy in \eqref{eq-gse} satisfies:
\begin{enumerate}
\item If $b(\kappa) \gg\kappa^{1/2}$, then
\begin{equation}\label{HK1}
\E=\kappa\left(\int_{\Gamma}\left(|\nabla
B_0(x)|\frac{H}{\kappa^2}\right)^{1/3}\,E\left(|\nabla
B_0(x)|\frac{H}{\kappa^2}\right)\,ds(x)\right)+\frac{\kappa^3}{H}o(1)\,,\end{equation}
where  $ds$ is the arc-length measure in $\Gamma$.
\item If $b(\kappa) \lesssim \kappa^{1/2}$, then,
\begin{equation}\label{HK1*}
\E= \kappa^2\int_{\Omega}
g\left(\frac{H}{\kappa}|B_0(x)|\right)\,dx+\frac{\kappa^3}{H}o(1)
\,.
\end{equation}
\end{enumerate}
\end{thm}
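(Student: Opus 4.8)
The plan is to prove the two asymptotic formulas by the standard upper bound / lower bound matching scheme, with the novelty concentrated in the anisotropic rescaling near $\Gamma$ in regime (1). Throughout, write $h = 1/(\kappa H)$ for the effective semiclassical parameter and recall $H = b(\kappa)\kappa$, so that $\kappa H = b\kappa^2$. The magnetic field $B_0$ vanishes linearly on the curves of $\Gamma$, so near a point of $\Gamma$ in suitable coordinates $(s,t)$ (arc-length along $\Gamma$ and signed normal distance) we have $B_0(s,t) \approx |\nabla B_0(s,t)|\, t$, and the linear model after blow-up is the Montgomery operator whose bottom eigenvalue is $\lambda_0$; the reduced one-dimensional-family energy $E(L)$ introduced in Theorem~\ref{thm-FK} is precisely what governs the strip problem with slope parameter $L = |\nabla B_0|\, H/\kappa^2$.

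\smallskip

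\emph{Upper bound.} First I would construct a trial state. In regime (1), $b \gg \kappa^{1/2}$, i.e.\ $L = |\nabla B_0|H/\kappa^2 \gg \kappa^{-1}\cdot(\text{const})$, wait — the relevant smallness is that the concentration length scale $(\kappa H |\nabla B_0|)^{-1/3}$ around $\Gamma$ is $\ll 1$ and also $\gg$ the boundary-layer scale, so one can localize, via a partition of unity along $\Gamma$, to thin curved tubes of width $\sim (\kappa H)^{-1/3}$. In each tube, freeze $|\nabla B_0(s)|$ at its value at the center, rescale $t = (\kappa H |\nabla B_0|)^{-1/3}\tau$ and $s$ accordingly, and insert the minimizer (or near-minimizer) of the reduced strip functional defining $E(L)$ with $L = |\nabla B_0(s)| H/\kappa^2$; patch these with cut-offs and set $\psi \equiv 0$, $\Ab = \Ab_{\rm app}$ away from $\Gamma$. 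Summing the local energies and passing from the Riemann sum over tubes to $\int_\Gamma (\cdot)^{1/3} E(\cdot)\, ds$, plus controlling the magnetic-field self-energy term and the errors from freezing coefficients and from curvature, gives the upper bound $\E \le \kappa \int_\Gamma (|\nabla B_0| H/\kappa^2)^{1/3} E(\cdot)\, ds + o(\kappa^3/H)$. In regime (2), $b \lesssim \kappa^{1/2}$, the same machinery but with the bulk trial state built from the $g$-function minimizers (exactly as in \cite{Att}, whose asymptotics \eqref{eq-2D-thm} already covers $H \lesssim \kappa$) yields \eqref{HK1*}; the point is that when $b \lesssim \kappa^{1/2}$ the concentration scale is no longer small compared to the natural bulk scale and the $\int_\Omega g$ term dominates, while the $\Gamma$-contribution is absorbed in the error.

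\smallskip

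\emph{Lower bound.} I would fix a ground state $(\psi, \Ab)$, and first replace $\Ab$ by $\Ab_{\rm app}$ (the field with $\curl \Ab_{\rm app} = B_0$) up to controlled error using the a priori estimates on $\curl\Ab - B_0$ in $L^2$ and an Agmon-type decay / Sobolev argument — this costs only lower-order terms in the stated regime. Then I would use a partition of unity $\sum \chi_j^2 = 1$ with pieces of size $\sim (\kappa H)^{-1/3}$ near $\Gamma$ and of unit size away from $\Gamma$, pay the IMS localization error $\sum |\nabla \chi_j|^2 |\psi|^2$, and in each near-$\Gamma$ piece bound the local energy from below by the reduced strip energy (using that $B_0$ is linear to leading order there and monotonicity/continuity of $E$), and in each away-from-$\Gamma$ piece by $\int g(\frac{H}{\kappa}|B_0|)$ with the known local lower bounds of \cite{SS02, Att}. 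The delicate bookkeeping is to show that in regime (1) the $g$-term from the bulk is $o(\kappa^3/H)$ — this uses that $g(|B_0|H/\kappa)$ is small, of size $\sim (|B_0| H/\kappa)^{-2}$ for large argument... no: $g$ vanishes once its argument exceeds a threshold, and near $\Gamma$ where the argument is small, $g$ is only of order $1$ on a set of measure $\sim$ (width of the region where $|B_0| H/\kappa \lesssim 1$) $\sim \kappa/H$, giving a contribution $\sim \kappa^2 \cdot \kappa/H = \kappa^3/H$ — so actually this term is \emph{comparable}, and one must instead show that the finer strip analysis replaces it by the sharper $\kappa\int_\Gamma(\cdot)^{1/3} E(\cdot)$. \textbf{This matching — proving that the correct concentration profile on scale $(\kappa H)^{-1/3}$ beats the crude $g$-estimate and that the two regimes meet continuously at $b \sim \kappa^{1/2}$ — is the main obstacle}, and it is resolved exactly as in the effective-model analysis of Theorem~\ref{thm-FK}: the reduced strip functional, after the anisotropic blow-up, $\Gamma$-converges (or its infimum converges) to the family energy $E(L)$, and $(\cdot)^{1/3} E(\cdot)$ is the correct Riemann-integrand along $\Gamma$.

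\smallskip

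\emph{Conclusion.} Combining the matching upper and lower bounds in each regime, and checking that the error terms are all $o(\kappa^3/H)$ uniformly under \eqref{hypb}, yields \eqref{HK1} and \eqref{HK1*}. The remaining technical lemmas — existence and regularity of strip minimizers, continuity of $L \mapsto E(L)$ and its behavior as $L\to 0_+$ and $L \to \lambda_0^{-3/2}$, and the a priori bounds on critical points — are collected separately and used as black boxes here.
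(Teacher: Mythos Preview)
Your overall architecture---matching upper and lower bounds via trial states and localization---is correct and matches the paper.  But there is a genuine gap in your lower-bound argument for regime~(1), and your diagnosis of the ``main obstacle'' is off.

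You correctly observe that applying the $g$-bound on the away-from-$\Gamma$ pieces gives a contribution of order $\kappa^3/H$, the \emph{same} order as the leading term, so it cannot be discarded.  Your proposed fix---that ``the reduced strip functional $\Gamma$-converges'' and this somehow replaces the crude $g$-estimate---does not work: Theorem~\ref{thm-FK} is purely a large-$R$ statement about the model strip problem and says nothing about the bulk of $\Omega$.  The paper resolves this completely differently, and that resolution is the main technical input: one first proves a rough bound $\|\psi\|_2\lesssim(\kappa/H)^{1/6}$ (Theorem~\ref{thm:rb}), then uses it to establish Agmon-type exponential decay of $|\psi|$ away from $\Gamma$ with rate $\sim H/\kappa$ (Theorem~\ref{thm:exdec}), and then bootstraps to the sharp bound $\|\psi\|_2\lesssim(\kappa/H)^{1/2}$ and to $C^{0,\alpha}$ control of $\Ab-\Fb$ (Propositions~\ref{thm:ob} and~\ref{thm:oe}).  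With exponential decay in hand, in regime~(1) one takes the localization scale $\ell=\delta H^{-1/3}$; since $H\gg\kappa^{3/2}$ one has $\ell\gg\kappa/H$, so the energy outside the $\ell$-tube of $\Gamma$ is genuinely $o(\kappa^3/H)$ and no $g$-estimate is needed there at all.  You invoked Agmon decay only in passing, and only to justify replacing $\Ab$ by $\Fb$; in the paper the dependency runs the other way---the sharp $\Ab-\Fb$ estimates are \emph{consequences} of the decay, not prerequisites for it.

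A smaller gap: for the upper bound in regime~(2) you appeal to \cite{Att}, but Attar's result~\eqref{eq-2D-thm} is proved only under~\eqref{hypatt}, i.e.\ $H\lesssim\kappa$, whereas regime~(2) extends to $H\lesssim\kappa^{3/2}$.  The paper supplies a separate trial-state construction (Theorem~\ref{thm:up*}) to cover this range.
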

\begin{rem}
There is a small gap between the two regimes considered above.  Hence it would be interesting to show that the two asymptotics match
 in this intermediate zone.
\end{rem}
\begin{rem}
As we shall see in Section~2, Pan and Kwek \cite{PK} prove that if
$H$ is larger than a critical value $H_{c_3}(\kappa)\,$, then the
{minimizers} of the  functional in \eqref{eq-3D-GLf} are trivial and
the ground state energy is $\E=0\,$. Furthermore, the value of
$H_{c_3}(\kappa)$ as given in \cite{PK} admits, as $\kappa
\rightarrow \infty\,$,  the following asymptotics
$$H_{c_3}(\kappa)\sim c_0\kappa^2\,,$$
where $c_0$ is a universal explicit constant. As such, the
assumption on the magnetic field in Theorem~\ref{thm:HK} is
significant when $b(\kappa)\kappa\leq H\leq M\kappa^2$ and
$M\in(0,c_0]$ is a constant. Note also that our theorem gives a
bridge between the situations studied by Attar in \cite{Att, Att2}
and Pan-Kwek in \cite{PK}.
\end{rem}
\begin{rem}
As long as the intensity of the external magnetic field satisfies
$\kappa\ll H\leq M\kappa^2$ and $M\in(0,c_0)$, the remainder term
appearing in Theorem~\ref{thm:HK} is of lower order compared with
the principal term. The function $g(b)$ is bounded and vanishes when
$b\geq 1$. Accordingly,
$$\int_\Omega g\left(\frac{H}{\kappa}|B_0(x)|\right)\,dx=\int_{\{|B_0(x)|<\frac\kappa H\}}
g\left(\frac{H}{\kappa}|B_0(x)|\right)\,dx\approx \frac\kappa H\,.$$
We shall see in Theorem~\ref{thm:Lto0} that,
$$\left(|\nabla
B_0(x)|\frac{H}{\kappa^2}\right)^{1/3}\,E\left(|\nabla
B_0(x)|\frac{H}{\kappa^2}\right)\approx \frac{\kappa^2}{H}\,.$$
\end{rem}

Along the proof of Theorem~\ref{thm:HK}, we obtain:

\begin{thm}\label{thm:HK-loc}
Suppose that the function $B_0$ satisfies Assumptions
\eqref{eq:B0} and \eqref{eq:Gamma}. Let $b:\R_+\to\R_+$ be a
function satisfying \eqref{hypb}. Suppose that
  $$H = b(\kappa)\kappa $$ and that $(\psi,\Ab)$ is a minimizer of
the functional in \eqref{eq-3D-GLf}. \\
As $\kappa\to \infty\,$, there holds:\\
\begin{enumerate}
\item{\rm {\bf Estimate of the magnetic energy}.}
$$\kappa^2H^2\int_{\Omega}|\curl\Ab-B_0|^2\,dx =
\frac{\kappa^3}{H}\,o(1)\,.$$
\item{\rm {\bf Estimate of the local energy}.}\\
Let $D\subset \Omega$ be an open set with a smooth boundary such
that $\partial D\cap\Gamma$ is a finite set.
\begin{enumerate}
\item If $b(\kappa) \gg \kappa^{1/2}$, then
\begin{align}\mathcal
E_0(\psi,\Ab;D)&:=\int_D\left(|(\nabla-i\kappa
H\Ab)\psi|^2-\kappa^2|\psi|^2+\frac{\kappa^2}{2}|\psi|^4\right)\,dx\label{eq:E0}\\
&=\kappa\left(\int_{D\cap\Gamma}\left(|\nabla
B_0(x)|\frac{H}{\kappa^2}\right)^{1/3}\,E\left(|\nabla
B_0(x)|\frac{H}{\kappa^2}\right)\,ds(x)\right)+\frac{\kappa^3}{H}\,o(1)\,.
\nonumber\end{align}
\item If $1\ll b(\kappa) \lesssim\kappa^{1/2}$, then
$$
\mathcal E_0(\psi,\Ab;D)=\kappa^2\int_Dg\left(\frac{H}{\kappa}|B_0(x)|\right)\,dx
+\frac{\kappa^3}{H}\,o(1)\,.
$$
\end{enumerate}
\item{\rm {\bf Concentration of the order parameter}. }\\
Let $D\subset \Omega$ be an open set with a smooth boundary such
that $\partial D\cap\Gamma$ is a finite set.
\begin{enumerate}
\item If $ b(\kappa) \gg\kappa^{1/2}$, then
$$\int_D|\psi(x)|^4\,dx=-\frac2{\kappa}\left(\int_{D\cap\Gamma}\left(|\nabla
B_0(x)|\frac{H}{\kappa^2}\right)^{1/3}\,E\left(|\nabla
B_0(x)|\frac{H}{\kappa^2}\right)\,ds(x)\right)+\frac{\kappa}{H}\,o(1)\,.$$
\item If $1\ll b(\kappa)  \lesssim \kappa^{1/2}$, then
$$
\int_D|\psi(x)|^4\,dx=-\int_Dg\left(\frac{H}{\kappa}|B_0(x)|\right)\,dx
+\frac{\kappa}{H}\,o(1)\,.
$$
\end{enumerate}
\end{enumerate}
\end{thm}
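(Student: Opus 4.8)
\emph{Strategy.} The only genuinely new content is a \emph{localized lower bound} for the order--parameter energy $\mathcal E_0(\psi,\Ab;D)$ of an arbitrary minimizer, matching the claimed leading term; parts (1) and (3), and the upper bound in (2), then follow from it by combining with the ground state asymptotics of Theorem~\ref{thm:HK} and a local virial identity. \textbf{Step 1 (A priori estimates).} Let $(\psi,\Ab)$ be a minimizer. Since $(0,\Fb)$, with $\curl\Fb=B_0$ and $\Div\Fb=0$, is a competitor, $\mathcal E(\psi,\Ab)\le 0$; this gives $\|(\nabla-i\kappa H\Ab)\psi\|_{L^2(\Omega)}^2\lesssim\kappa^2$, and elliptic regularity for the second Ginzburg--Landau equation yields $\|\Ab-\Fb\|_{H^2(\Omega)}\lesssim H^{-1}$, so $\curl\Ab\to B_0$ uniformly. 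A standard Agmon estimate, using $\kappa H|\curl\Ab|-\kappa^2\gtrsim\kappa H|B_0|\gg\kappa^2$ outside a tube of width $\approx\kappa/H$ around $\Gamma$, shows that $|\psi|$ and $(\nabla-i\kappa H\Ab)\psi$ are exponentially small there in weighted $L^2$; in particular $\|\psi\|_{L^2(\Omega)}^2\lesssim\kappa/H$. Feeding this back gives $\|\Ab-\Fb\|_{L^\infty}\lesssim(\kappa H)^{-1}\kappa(\kappa/H)^{1/2}$, so $\Ab$ may be replaced by $\Fb$ in every local energy at the cost of $o(\kappa^3/H)$.

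\textbf{Step 2 (Local lower bound).} Fix an admissible $D$. Cover $\overline D$ by a grid of squares $Q_j$ of side $\ell$ with an IMS partition of unity $(\chi_j)$, so that $\mathcal E_0(\psi,\Ab;D)\ge\sum_j\big(\mathcal E_0(\chi_j\psi,\Fb;Q_j)-\ell^{-2}\|\psi\|_{L^2(Q_j)}^2\big)-o(\kappa^3/H)$, the total localization error being $\lesssim\ell^{-2}\kappa/H$. For a square with $\dist(Q_j,\Gamma)\ge\ell$ one freezes $B_0$ at its central value and invokes the constant--field lower bound of \cite{SS02,FK-cpde}: $\mathcal E_0(\chi_j\psi,\Fb;Q_j)\ge\kappa^2\int_{Q_j}g(\tfrac H\kappa|B_0|)\,dx-(\text{error})$. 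For a square meeting $\Gamma$ one passes to tubular coordinates along $\Gamma$, linearizes $B_0\simeq t\,\partial_\nu B_0$, gauges $\Fb$ into the Montgomery potential, rescales to the natural length scale of \eqref{eq-gs-er''}, and compares with the reduced strip energy defining $E(L)$ with $L=|\nabla B_0(x)|H/\kappa^2$, obtaining $\mathcal E_0(\chi_j\psi,\Fb;Q_j)\ge\kappa\int_{Q_j\cap\Gamma}L^{1/3}E(L)\,ds-(\text{error})$. Summing and passing to a Riemann sum along $\Gamma$ (resp. over $\Omega$) yields, in regime (a) (resp. (b)), $\mathcal E_0(\psi,\Ab;D)\ge(\text{leading term in }D)-\tfrac{\kappa^3}H o(1)$, once $\ell$ is chosen between the $\Gamma$--length scale and the variation scale of $B_0$; near the finitely many points of $\Gamma\cap\partial\Omega$ one discards squares of total measure $O(\ell)$ using $\|\psi\|_\infty\le1$.

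\textbf{Step 3 (Proof of (1), (2), (3)).} By Theorem~\ref{thm:HK}, $\E=(\text{leading in }\Omega)+\tfrac{\kappa^3}H o(1)$, and this leading term is additive in the domain. From $\mathcal E(\psi,\Ab)=\mathcal E_0(\psi,\Ab;\Omega)+\kappa^2H^2\int_\Omega|\curl\Ab-B_0|^2$ and Step 2 with $D=\Omega$ one gets (1). For the upper bound in (2), $\mathcal E_0(\psi,\Ab;D)=\E-\kappa^2H^2\int_\Omega|\curl\Ab-B_0|^2-\mathcal E_0(\psi,\Ab;\Omega\setminus D)\le\E-\mathcal E_0(\psi,\Ab;\Omega\setminus D)$; inserting Step 2 for $\Omega\setminus D$ and the upper bound for $\E$ matches the lower bound of Step 2 for $D$, proving (2). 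For (3), testing the first Ginzburg--Landau equation with $\bar\psi$ on $D$ gives $\mathcal E_0(\psi,\Ab;D)=-\tfrac{\kappa^2}2\int_D|\psi|^4\,dx+\operatorname{Re}\!\int_{\partial D}\bar\psi\,\nu\!\cdot\!(\nabla-i\kappa H\Ab)\psi\,ds$. The boundary term is made $o(\kappa^2/H)$ by a co-area argument: foliating a tubular neighborhood of $\partial D$ of width $\eta$, one selects $D'$ with $\big|\int_{\partial D'}\cdots\big|\lesssim\eta^{-1}\|\psi\|_{L^2}\|(\nabla-i\kappa H\Ab)\psi\|_{L^2}\lesssim\eta^{-1}\kappa^2/H$, then transfers from $D'$ to $D$ via $\int_{D\triangle D'}|\psi|^4\lesssim\eta$ (sharpened near $\Gamma$, and negligible near $\Gamma\cap\partial\Omega$ by \eqref{eq:ass:Gam} and Step 1), and optimizes $\eta$. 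Then (3) follows from (2) and the identity.

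\textbf{Main obstacle.} The delicate point is the $\Gamma$--cell lower bound in Step 2: one must match the rescaled Ginzburg--Landau energy with the one--dimensional model $E(L)$ \emph{uniformly} along $\Gamma$, with an error truly of size $o(\kappa^3/H)$, while absorbing the curvature of $\Gamma$, the variation of $\partial_\nu B_0$, the Montgomery approximation of $\Fb$, and the corners $\Gamma\cap\partial\Omega$, and choosing the grid size $\ell$ so that all of these, together with the IMS error $\ell^{-2}\kappa/H$, are negligible against the cell energy $\approx(\kappa^3/H)\,\ell$. The narrow window between the two regimes for $b(\kappa)$ (cf.\ the Remark after Theorem~\ref{thm:HK}) is precisely what makes this balancing tight.
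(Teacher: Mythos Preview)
Your overall strategy is the paper's: an a priori $L^2$ bound $\|\psi\|_2^2\lesssim\kappa/H$ via exponential decay away from $\Gamma$, a local lower bound for $\mathcal E_0(h\psi,\Ab;D)$ by IMS localization into cells (Montgomery cells along $\Gamma$ in regime (a), constant--field cells in regime (b)), and then parts (1) and (2) by matching against Theorem~\ref{thm:HK}. Two remarks are in order.

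\textbf{A priori estimates.} Your claim that ``$\curl\Ab\to B_0$ uniformly'' is not established in the paper and is not obviously true: from the second Ginzburg--Landau equation one only gets $\curl(\Ab-\Fb)\in H^1(\Omega)$ with norm $O(H^{-1})$, which in two dimensions does not embed into $L^\infty$. The paper handles the perturbation of $\curl\Ab$ in the Agmon argument through its $L^2$ norm, combining $\|\curl(\Ab-\Fb)\|_2\le CH^{-1}\|\psi\|_2$ with the Sobolev inequality $\|f\psi\|_4^2\le C_{\rm Sob}(\eta\|(\nabla-i\kappa H\Ab)f\psi\|_2^2+\eta^{-1}\|f\psi\|_2^2)$ and a bootstrap (rough bound $\|\psi\|_2\lesssim(\kappa/H)^{1/6}$ first, then the decay, then $\|\psi\|_2\lesssim(\kappa/H)^{1/2}$). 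Your Step~1 should be amended accordingly.

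\textbf{Part (3).} Here you diverge from the paper. You integrate the first Ginzburg--Landau equation against $\bar\psi$ on $D$, producing a boundary integral that you then tame by a co--area selection of a nearby level $D'$ and a transfer $D'\to D$. This can be made to work (choose $\kappa^{-1}\ll\eta\ll 1$; the exponential decay makes $\int_{D\triangle D'}|\psi|^4=o(\kappa/H)$ since $\partial D\cap\Gamma$ is finite), but it is delicate because $D'$ depends on $\kappa$ and you must check that the error in (2) is uniform over the family $(D')$. The paper avoids the boundary term altogether: it multiplies the equation by $\chi_\ell^2\bar\psi$ with $\chi_\ell\in C_c^\infty(D)$, $\chi_\ell=1$ on $D_\ell$, $\ell=\kappa^{-1/4}(\kappa/H)^{1/2}$, and integrates by parts to get
\[
\int_D\Big(|(\nabla-i\kappa H\Ab)\chi_\ell\psi|^2-\kappa^2\chi_\ell^2|\psi|^2+\kappa^2\chi_\ell^2|\psi|^4\Big)\,dx=\int_D|\nabla\chi_\ell|^2|\psi|^2\,dx=o\!\left(\frac{\kappa^3}{H}\right),
\]
then uses $\chi_\ell^2\ge\chi_\ell^4$ to deduce $-\tfrac{\kappa^2}2\int_D\chi_\ell^2|\psi|^4\ge\mathcal E_0(\chi_\ell\psi,\Ab;D)-o(\kappa^3/H)$, whence the upper bound on $\int_D|\psi|^4$ via your Step~2 lower bound. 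The matching lower bound on $\int_D|\psi|^4$ is obtained not by a second boundary argument but by subtraction: write $\int_D|\psi|^4=\int_\Omega|\psi|^4-\int_{\overline D^c}|\psi|^4$, use the exact global identity $-\tfrac{\kappa^2}2\int_\Omega|\psi|^4=\mathcal E_0(\psi,\Ab;\Omega)$ together with Theorem~\ref{thm:HK}, and apply the upper bound just proved to $\overline D^c$. This cutoff--and--subtract route is shorter and sidesteps the co--area/uniformity issues entirely.
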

\begin{rem} It could be interesting to improve the second term when
 $D\cap \Gamma
=\emptyset$.  In Theorem~\ref{thm:exdec}, we will prove that the
$L^2$-norm of the order parameter $\psi$ is concentrated near the
set $\Gamma$, and that $\psi$   exponentially decays as $\kappa
\rightarrow \infty$, away from $\Gamma$.
\end{rem}
\begin{rem}
In Theorem~\ref{thm:HK-loc}, the function $o(1)$  is  bounded independently of
the choice of the minimizer $(\psi,\Ab)$. In the first assertion, one can find a bound of
$o(1)$ which depends only  on the domain $\Omega$ and the function $B_0$, while
in the second and third assertions, the bound depends additionally on
the domain $D$.
\end{rem}

\section{Critical fields}\label{sec:CF}
The identification of the critical {\it magnetic} fields is an
important question regarding the functional in  \eqref{eq-3D-GLf}.
This question has an early appearance in physics (see e.g.
\cite{dG}) and was the subject of a vast mathematical literature in
the  past two decades. The two monographs \cite{FH-b, SaSe}
contain an extensive review of many important results. In this
section, we give a brief {\it informal} description of  critical
fields and highlight the importance of the case of a vanishing
applied magnetic field.

\subsection{Reminder: The constant field case}
When the magnetic field $B_0$ is a (non-zero) constant,  three critical values are assigned  to the magnetic field $H$, namely
$H_{c_1}$, $H_{c_2}$ and $H_{c_3}$. The behavior of minimizers (and
critical points) of the functional in \eqref{eq-3D-GLf} changes as
the parameter $H$ (i.e. magnetic field) crosses the values
$H_{c_1}$, $H_{c_2}$ and $H_{c_3}$. The identification of these
critical values is not easy, especially the value $H_{c_2}$ which is
still {\it loosely} defined.


Let us recall that a critical point $(\psi,\Ab)$ of the functional
in \eqref{eq-3D-GLf} is said to be {\it normal} if $\psi=0$
everywhere. The critical field $H_{c_3}(\kappa)$ is then defined as
the value at which the transition from {\it normal} to {\it
non-normal} critical points takes place.

The identification of the critical value $H_{c_3}(\kappa)$ is
strongly related to the spectral analysis of the magnetic
Schr\"odinger operator with a constant magnetic field and Neumann
boundary condition. Suppose that $\Omega\subset\R^2$ is  connected,
open, has a  smooth boundary and the boundary consists of a finite
number of connected components, $A_0$ a vector field satisfying
${\rm curl} A_0 = B_0$, the function $B_0$ is constant and positive,
and $\lambda (H \kappa A_0)$ the lowest eigenvalue of the magnetic
Schr\"odinger operator
\begin{equation}\label{eq:N}
-\Delta_{\kappa H A_0}=-(\nabla-i\kappa H
A_0)^2\quad{\rm in~}L^2(\Omega)\,,\end{equation} with Neumann
boundary conditions. It is proved that the function $t\mapsto
\lambda (tA_0)$ is monotonic for large values of $t$, see
\cite{FH-b} and the references therein.
  Grosso modo\footnote{Initially
 (see \cite{LuPa99}), one should start by defining four critical values according to locally or globally minimizing solutions.
 Following the terminology of \cite{FH-b}, these are upper or lower,  global or local fields.
 The four fields are proved  to be equal in
\cite{FH-b}.} the critical field $H_{c_3}$ is the unique solution
of the
 equation,
 \begin{equation}\label{a1}
 \lambda (H_{c_3}(\kappa)\kappa A_0) = \kappa^2\,.
 \end{equation}
 In this case, it was shown by Lu-Pan \cite{LuPa99}
 that,
 \begin{equation}\label{a2}
 \lambda (H \kappa A_0) \sim  (H \kappa) B_0 \Theta_0 \,,\, \mbox{ when } H \kappa \gg 1\,.
 \end{equation}
 Further improvements of \eqref{a2} are available, see \cite{FH-b} for the state of the art in 2009 and references therein.

 As a consequence of \eqref{a1} and \eqref{a2}, we get for $\kappa$ sufficiently large,
 \begin{equation}\label{a3}
 H_{c_3}(\kappa)\sim \kappa /(\Theta_0 B_0)\,.
 \end{equation}
 The second critical field $H_{c_2}(\kappa)$ is usually defined as
 follows
 \begin{equation}\label{a4}
 H_{c_2}(\kappa)= \kappa/B_0\,.
 \end{equation}
 Notice that this definition of $H_{c_2}$ is asymptotically matching
 with the following definition\footnote{Assuming the monotonicity of $t\mapsto \lambda^D (t A_0)$ for $t$ large.},
\begin{equation}\label{a4'}
\lambda^D (H_{c_2}(\kappa)\kappa A_0) = \kappa^2\,,\end{equation} where
$\lambda^D$ is the first eigenvalue of the operator in \eqref{eq:N},
but with Dirichlet boundary condition.

Near $H_{c_2}(\kappa)$, a transition takes place between  surface
and bulk  superconductivity. At the level of the energy, this
transition is described in \cite{FK-am}.

 We recall that $\Theta_0 <1\,$.  Hence, as expected, $H_{c_2}(\kappa)< H_{c_3}(\kappa)$ for $\kappa$ sufficiently large.
 For the identification of the critical field $H_{c_1}(\kappa)$, we refer to Sandier-Serfaty \cite{SaSe}. A natural question is to extend
 this discussion in the variable magnetic field case (i.e. $B_0$ is a non-constant function).

\subsection{The case of a non vanishing exterior magnetic field}
 Here we discuss the situation where the magnetic field $B_0$ is a non-constant function such that $B_0(x)\neq 0$ everywhere in $\overline \Omega\,$.
 In this case, it is proved by Lu-Pan \cite[Theorem~1]{LuPajmp}
 that,
  \begin{equation}\label{a5}
 \lambda (H \kappa A_0) \sim  (H \kappa)\min \left( \inf_{x\in \overline\Omega}  |B_0(x)| , \Theta_0 \inf_{x\in \partial  \Omega} |B_0(x)| \right) \,,
 \end{equation}
 as $H\kappa\to \infty$.
 Basically, this leads to consider two cases as follows.\\
 \paragraph{\bf Surface superconductivity}  First, we assume that
 \begin{equation}
  \inf_{x\in \overline\Omega}  |B_0(x)|  > \Theta_0 \inf_{x\in \partial  \Omega} |B_0(x)| \,.
 \end{equation}
 In this case, the phenomenon of surface superconductivity observed in the constant magnetic field case is preserved.
 More precisely, the superconductivity starts to appear at the points where $(B_0)_{/\partial \Omega}$ is minimal. The critical value  $H_{c_3}(\kappa)$ is still
 defined by \eqref{a1}. If the minima of $(B_0)_{/\partial\Omega}$ are non-degenerate, then  the monotonicity of the
 eigenvalue $\lambda(t\,A_0)$ for large values of $t$ is established
 in \cite[Section~6]{Ra}. Consequently, we get when $\kappa$ is sufficiently large,
  \begin{equation}\label{a7}
 H_{c_3}(\kappa)\sim \frac{ \kappa}{ \Theta_0 \inf_{x\in \partial  \Omega} |B_0(x)| }
 \,.
 \end{equation}
 Tentatively, one could think to define  $H_{c_2}(\kappa)$ either by
 \begin{equation}\label{a8}
 H_{c_2}(\kappa) =  \frac{ \kappa}{  \inf_{x\in  \overline\Omega} |B_0(x)| }
 \,,
 \end{equation}
or by
\begin{equation}\label{a9}
\lambda^D(H_{c_2}(\kappa)\kappa A_0) = \kappa^2\,,
\end{equation}
where $\lambda^D$ is the first eigenvalue of the operator in
\eqref{eq:N} with Dirichlet boundary condition. Notice that both
formulas agree with their analogues in the constant magnetic field
case (see \eqref{a4} and \eqref{a4'}). Also, the values of
$H_{c_2}(\kappa)$ given in \eqref{a8} or \eqref{a9}  asymptotically
match as $\kappa\to \infty\,$.

In order that the definition of $H_{c_2}(\kappa)$ in \eqref{a9} be
consistent, one should prove monotonicity of $t\mapsto \lambda^D (t A_0)$ for large of values of $t$. This will ensure that \eqref{a9} assigns a unique value of $H_{c_2}(\kappa)$.
However, such a monotonicity is not proved yet.
The definition in \eqref{a8}  was proposed in \cite{FH-b}.

\paragraph{\bf Interior onset of superconductivity}
Here we assume that
\begin{equation}\label{eq:ass-int}
  \inf_{x\in \overline\Omega}  |B_0(x)|  <  \Theta_0 \inf_{x\in \partial  \Omega} |B_0(x)| \,.
 \end{equation}
 In this case, the onset of superconductivity near the surface of the domain disappears. If one decreases gradually the intensity of the magnetic field $H$ from $\infty$,
 then superconductivity will start to appear near the minima of  the function $|B_0|$, i.e. inside a compact subset of  $\Omega$.

 In this situation, we have not to distinguish between the critical fields $H_{c_2}(\kappa)$ and $H_{c_3}(\kappa)$, since surface superconductivity is absent here.
 Consequently,
 we expect that,
 \begin{equation}\label{a10}
 H_{c_2}(\kappa)=H_{c_3}(\kappa)\sim \frac{ \kappa}{  \inf_{x\in  \overline\Omega} |B_0(x)| }\,.
\end{equation}
A partial justification of this fact can be done using the
linearized Ginzburg-Landau equation near a normal solution.
Actually, we may also define $H_{c_3}(\kappa)$ and $H_{c_2}(\kappa)$
as the values verifying \eqref{a1}  and \eqref{a4'}. It should be
noticed here that the vector field $A_0$ satisfies $\curl{A_0}=B_0$
and $B_0$ cannot  be constant. Under the assumption \eqref{eq:ass-int},
the known spectral asymptotics (which are actually the same in this
case) of the Dirichlet and Neumann eigenvalues will lead us to the
asymptotics given in the righthand side of  \eqref{a10}.  Under the
additional assumption that $ \inf_{x\in  \overline\Omega} |B_0(x)|$
is attained at a unique minimum in $\Omega$ and that this minimum is
non degenerate, a complete asymptotics of $\lambda^N(tA_0)$ can be
given (see Helffer-Mohamed \cite{HelMo1}, Helffer-Kordyukov
\cite{HelKo,HelKo2}, Raymond-Vu Ngoc \cite{RaSV} ) and the
monotonicity/strong diamagnetism property  holds for large values of $t$
(see Chapter 3 in \cite{FH-b}). Hence the definition of
$H_{c_3}(\kappa)$ is clear in this case.

Besides the aforementioned linearized calculations, the results of
\cite{Att} are useful to justify the equality of the critical fields
$H_{c_2}(\kappa)$ and $H_{c_3}(\kappa)$ as well as their definition
in \eqref{a10}.

{\bf First,} we observe that if  $C$ is a positive constant such
that  $C < \displaystyle\frac{1}{\inf _{x\in
\overline\Omega}|B_0(x)|}$, and if $H\leq C \kappa\,$, then the open
set $D=\{x\in\Omega~:~ |B_0(x)| < \frac 1C\}\not=\emptyset$ is
non-empty. Now,  Theorem~1.4 of \cite{Att} asserts that,
$$
\exists~\kappa_0>0\,,~\exists~\epsilon_D>0,\quad \int_D |\psi(x) |^4
dx \geq  \epsilon_D >0\,,$$ for any $\kappa\geq\kappa_0$ and any
{\it minimizer} $(\psi,\Ab)$ of the functional in \eqref{eq-3D-GLf}.\\
 Consequently, a minimizer can
not be a {\it normal solution}.

{\bf Now,} suppose that the constant $C$ satisfies  $$C >
\displaystyle\frac{1}{\inf _{x\in \overline\Omega}|B_0(x)|}\,.$$
 If
$H\geq C \kappa$, then  Theorem~1.4 of \cite{Att}  asserts that any
critical point $(\psi,\Ab)$ of \eqref{eq-3D-GLf} satisfies,
$$
\lim_{\kappa\to \infty}\int_\Omega |\psi(x)|^4 dx=0\,,$$ hence,
loosely speaking, critical points are {\it nearly} normal solutions.
However, repeating the proof given in \cite[Section~10.4]{FH-b} and
using the asymptotics of the first eigenvalue in \eqref{a5}, one can
get that such critical points are indeed normal solutions.

This discussion shows that the value  appearing in the right hand side of  \eqref{a10}
is indeed critical.

\subsection{The case of a vanishing exterior magnetic field}

We now discuss the case when $B_0$ vanishes along a curve, first
 considered in \cite{PK} and then in \cite{Att}. We assume
that
\begin{equation}\label{b1}
|B_0| + |\nabla B_0| \neq 0 \mbox{ in } \overline{\Omega}\,,
\end{equation}
which ensures that $B_0$ vanishes {\it non-degenerately}.

 At each
point of $B_0^{-1}(0) \cap \Omega$, Pan-Kwek \cite{PK} introduce a
toy model (a Montgomery operator parameterized by the intensity of
the magnetic field at this point) whose ground state energy, denoted
by $\lambda_0$,  captures the `local' ground state energy of the
Schr\"odinger operator in \eqref{eq:N}.

Similarly, at every point $x$ of $B_0^{-1}(0) \cap
\partial \Omega$, a toy operator is defined on $\mathbb R^2_+$
parameterized (up to unitary equivalence) by the intensity of
$B_0(x)$ and the angle $\theta(x)\in\,[0,\pi/2)$ between the unit
normal of the boundary and $\nabla B_0(x)$. The ground state energy
of this toy operator is denoted by  $\lambda_0(\R_+,\theta(x))$.

The leading order behavior of the ground state energy of the
operator in \eqref{eq:N} is now described as follows \cite{PK},
\begin{equation}\label{a11}
\lambda(H\kappa A_0)\sim (H\kappa)^{2/3}\,\alpha_1^{2/3}\,,
\end{equation}
as $H\kappa\to \infty\,$.\\
 Here
\begin{equation}\label{a12}
\alpha_1=\min\left(\lambda_0^{3/2}\,\min_{x\in\Gamma_{\rm blk}}|\nabla B_0(x)|\,,\,\min_{x\in\Gamma_{\rm bnd}}\lambda_0(\R_+,\theta(x))|\nabla B_0(x)|\right)\,,
\end{equation}
\begin{equation}
\Gamma_{\rm blk}=\{x\in\Omega~:~B_0(x)=0\}
\end{equation}
 and
 \begin{equation} \Gamma_{\rm
bnd}=\{x\in\partial\Omega~:~B_0(x)=0\}\,.
\end{equation}

The critical value $H_{c_3}(\kappa)$ could tentatively  be defined
as the solution of the equation in \eqref{a1}. However, when
$B_0=\curl A_0$ vanishes, monotonicity of $t\mapsto\lambda(tA_0)$ is
not  a direct application of Chapter 3 in \cite{FH-b} (see the
discussion below). Nevertheless, for the various
definitions of $H_{c_3} (\kappa)$ proposed in \cite{PK}, one always
get  that, for large values of $\kappa$,
\begin{equation}\label{b2}
H_{c_3}(\kappa)\sim\frac{\kappa^2}{\alpha_1}\,.
\end{equation}
Surface superconductivity is absent if
$$
\lambda_0^{3/2}\,\min_{x\in\Gamma_{\rm blk}}|\nabla B_0(x)|<\min_{x\in\Gamma_{\rm bnd}}\lambda_0(\R_+,\theta(x))|\nabla B_0(x)|\,,
$$
and in this case, we do not distinguish between $H_{c_2}$ and
$H_{c_3}$. However, if
\begin{equation}\label{eq:2ndcase}
\lambda_0^{3/2}\,\min_{x\in\Gamma_{\rm blk}}|\nabla B_0(x)|>\min_{x\in\Gamma_{\rm bnd}}\lambda_0(\R_+,\theta(x))|\nabla B_0(x)|\,,
\end{equation}
the phenomenon of surface superconductivity is observed in
decreasing magnetic fields. Superconductivity will nucleate near the
minima of the function
$$\Gamma_{\rm bnd}\ni x\mapsto
\lambda_0(\R_+,\theta(x))|\nabla B_0(x)|\,.$$
 In this case, a natural
definition of $H_{c_2}(\kappa)$ can be,
\begin{equation}\label{b3}
H_{c_2}(\kappa) { :=} \frac{\kappa^2}{\alpha_2}\,,
\end{equation}
for large values of $\kappa$. Here
$$\alpha_2=\lambda_0^{3/2}\min_{x\in\Gamma_{\rm blk}}|\nabla B_0(x)|\,.$$
 Recently, we learn from N. Raymond that his student  J-P.  Miqueu \cite{Miq} is working on the case when \eqref{eq:2ndcase} is satisfied.
 The project in  \cite{Miq} is to give a complete  asymptotics of the first eigenvalue under  the additional  assumption that $\Gamma$ touches the boundary transversally.    If successful, then
a modification of the proof\,\footnote{Personal communication with
S. Fournais.} given in \cite{FoHe07} will yield
 the monotonicity of  $\lambda^N(tA_0)$
for large values of $t$.

Clearly, the condition in \eqref{hypatt} is violated when the intensity of the magnetic field $H$ is
comparable with the critical value $H_{c_3}(\kappa)\approx \kappa^2$, thereby
preventing the  application of the results of Attar \cite{Att}.

\section{The limiting problem}\label{hc2-sec-lbp}

\subsection{The Montgomery operator}~\\

Consider the self-adjoint operator in $L^2(\R^2)$
\begin{equation}\label{eq-P}
P=-\left(\partial_{x_1}-i\frac{x_2^2}2\right)^2-\partial_{x_2}^2\,.\end{equation}
The ground state energy
\begin{equation}
 \lambda_0= \inf \sigma (P)
\end{equation}
 of the operator $P$ is
described using the Montgomery operator as follows.\\
 If $\tau\in\R$,
let $\lambda(\tau)$ be the first eigenvalue of the Montgomery
operator \cite{M},
\begin{equation}\label{deflambda}
P(\tau)=-\frac{d^2}{dx_2^2}+\Big(\frac{x_2^2}{2} +\tau\Big)^2\,,\quad{\rm in
~}L^2(\R)\,.
\end{equation}
Notice that the eigenvalue $\lambda(\tau)$ is positive, simple and
has a unique positive eigenfunction $\varphi^\tau$  of $L^2$ norm $1$.
 There exists a {\bf unique} $\tau_0\in\R$ such that
\begin{equation}\label{eq-lambda0}
\lambda_0=\lambda(\tau_0)\,.
\end{equation}
Hence $\lambda_0 >0$. We write
$$
\varphi_0=\varphi^{\tau_0}
$$
Clearly, the function
\begin{equation}\label{eq-psi0}
\psi_0(x_1,x_2)=e^{-i\tau_0x_1}\,\varphi_0(x_2)\,,
\end{equation}
is a bounded (generalized) eigenfunction of the operator $P$ with eigenvalue
$\lambda_0$. Moreover (see \cite{Hel} and references therein) the minimum of $\lambda$
at $\tau_0$ is non-degenerate.\\

We collect some important properties of the family of operators
$P(\tau)$.

\begin{thm}\label{thm:Hel}(\cite{Hel})
\begin{enumerate}
\item $\tau_0<0\,$.
\item $\lim_{\tau\to\pm\infty}\lambda(\tau)= \infty\,.$
\item The function $\lambda(\tau)$ is increasing on the interval
$[0,\infty)\,$.
\end{enumerate}
\end{thm}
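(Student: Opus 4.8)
The plan is to prove assertions (2), (3), (1) in that order, since the last two rely on the first. Recall from the preceding discussion that for each fixed $\tau\in\R$ the potential $y\mapsto(\tfrac{y^2}{2}+\tau)^2$ is confining, so $P(\tau)$ has compact resolvent, $\lambda(\tau)$ is a simple eigenvalue with positive normalized eigenfunction $\varphi^\tau$ lying in the form domain; in particular $\int_\R y^2|\varphi^\tau|^2\,\md y<\infty$. \emph{Assertion (2).} For $\tau\to+\infty$ this is immediate: when $\tau>0$ one has $(\tfrac{y^2}{2}+\tau)^2\geq\tau^2$ pointwise, so $\lambda(\tau)\geq\tau^2\to\infty$ by the variational principle. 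The substantive limit is $\tau\to-\infty$, and this is the one real obstacle in the theorem: with $\tau=-t$, $t\to+\infty$, the potential $V_t(y)=\tfrac14(y^2-2t)^2$ fails to go to infinity, degenerating instead into a symmetric double well vanishing at $y=\pm\sqrt{2t}$, so no crude pointwise bound works. I would overcome this by an IMS localization at the scale $\sqrt{t}$ of the wells: fix a small $\delta>0$ and a quadratic partition of unity $\chi_0^2+\chi_1^2=1$ on $\R$, with $\chi_1\equiv1$ on the $\delta\sqrt{2t}$-neighbourhoods of $\pm\sqrt{2t}$, $\chi_1$ supported in the $2\delta\sqrt{2t}$-neighbourhoods, and $|\chi_0'|^2+|\chi_1'|^2\lesssim(\delta^2 t)^{-1}$. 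On $\supp\chi_0$ one has $|y^2-2t|\gtrsim\delta t$, hence $V_t\gtrsim\delta^2 t^2$; on $\supp\chi_1$, parametrising $y=\pm\sqrt{2t}+s$ gives $V_t(y)\geq\tfrac{t}{2}s^2$ (once $\delta$ is small), so there the quadratic form of $P(-t)$ dominates that of the harmonic oscillator $-\partial_s^2+\tfrac{t}{2}s^2$, whose ground state energy is $\sqrt{t/2}$. Feeding these two lower bounds and the $O(t^{-1})$ localization error into the IMS identity
\[
\langle P(-t)u,u\rangle=\langle P(-t)\chi_0 u,\chi_0 u\rangle+\langle P(-t)\chi_1 u,\chi_1 u\rangle-\|\chi_0'u\|^2-\|\chi_1'u\|^2
\]
gives $\lambda(-t)\geq\tfrac12\sqrt{t/2}$ for $t$ large, hence $\lambda(\tau)\to\infty$ as $\tau\to-\infty$.

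\emph{Assertion (3).} On $[0,\infty)$ the map $\tau\mapsto(\tfrac{y^2}{2}+\tau)^2$ is pointwise nondecreasing (both factors are nonnegative), so $\lambda$ is nondecreasing there by min--max. For strictness, let $0\leq\tau_1<\tau_2$ and let $\varphi$ be the ground state of $P(\tau_2)$; using $(\tfrac{y^2}{2}+\tau_2)^2-(\tfrac{y^2}{2}+\tau_1)^2=(\tau_2-\tau_1)(y^2+\tau_1+\tau_2)$,
\[
\lambda(\tau_2)=\langle P(\tau_2)\varphi,\varphi\rangle=\langle P(\tau_1)\varphi,\varphi\rangle+(\tau_2-\tau_1)\!\int_\R(y^2+\tau_1+\tau_2)|\varphi|^2\,\md y\geq\lambda(\tau_1)+(\tau_2-\tau_1)\!\int_\R y^2|\varphi|^2\,\md y>\lambda(\tau_1),
\]
the last step because $\varphi\not\equiv0$. (Equivalently, Feynman--Hellmann yields $\lambda'(\tau)=2\int_\R(\tfrac{y^2}{2}+\tau)|\varphi^\tau|^2\,\md y>0$ for $\tau\geq0$.)

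\emph{Assertion (1).} By (2) and continuity, $\lambda_0=\inf_{\tau\in\R}\lambda(\tau)$ is attained, at the point $\tau_0$. If $\tau_0>0$ then $\lambda(\tau_0)>\lambda(0)\geq\lambda_0$ by (3), a contradiction, so $\tau_0\leq0$. To exclude $\tau_0=0$, I would use $\varphi^0$ (the positive normalized ground state of $P(0)$) as a trial function: since $\|\varphi^0\|=1$,
\[
\lambda(\tau)\leq\langle P(\tau)\varphi^0,\varphi^0\rangle=\lambda(0)+\tau\!\int_\R y^2|\varphi^0|^2\,\md y+\tau^2,
\]
so choosing $\tau=-\varepsilon$ with $0<\varepsilon<\int_\R y^2|\varphi^0|^2\,\md y$ gives $\lambda(-\varepsilon)<\lambda(0)$; hence $\tau_0\neq0$, and together with $\tau_0\leq0$ this forces $\tau_0<0$. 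The one genuinely substantive point in the whole proof is the failure of confinement as $\tau\to-\infty$ treated in (2); once the localization estimate there is established, everything else is elementary.
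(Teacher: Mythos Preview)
Your argument is correct. The paper does not actually prove this theorem: it states the result with a reference to \cite{Hel} and moves on, so there is no in-paper proof to compare against. Your self-contained treatment is the standard one and is complete: the IMS localization with harmonic-oscillator comparison handles the only nontrivial limit $\tau\to-\infty$ (the potential degenerates into a double well at $\pm\sqrt{2t}$, and your localization at scale $\delta\sqrt{t}$ with the quadratic lower bound $V_t\geq\tfrac{t}{2}s^2$ near each well gives $\lambda(-t)\gtrsim\sqrt{t}$); the monotonicity on $[0,\infty)$ follows from pointwise monotonicity of the potential, with the difference identity $(\tfrac{y^2}{2}+\tau_2)^2-(\tfrac{y^2}{2}+\tau_1)^2=(\tau_2-\tau_1)(y^2+\tau_1+\tau_2)$ giving strictness; and the trial computation $\lambda(-\varepsilon)\leq\lambda(0)-\varepsilon\int y^2|\varphi^0|^2+\varepsilon^2$ (equivalently $\lambda'(0)>0$) forces the minimizer strictly to the left of zero.
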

~\\

\subsection{A one dimensional energy}~\\

Let $b>0$ and $\alpha\in\R$. Consider the functional
\begin{equation}\label{eq:f1D}
\mathcal E^{1D}_{\alpha,b}(f)=\int_{-\infty}^\infty\left(|f'(t)|^2+\left(\frac{t^2}2+\alpha\right)^2|f(t)|^2-b\, |f(t)|^2+\frac{b}2\, |f(t)|^4\right)\,dt\,,
\end{equation}
defined over configurations in the space
$$ B^1(\R)=\{f\in H^1(\R;\R)~:~t^2f\in L^2(\R)\}\,.$$
In light of Theorem~\ref{thm:Hel}, we may define  two
functions $z_1(b)$ and $z_2(b)$ satisfying,
\begin{equation}\label{eq:z12}
z_1(b)<\tau_0<z_2(b)\,,\quad \lambda^{-1}\big([\tau_0,b)\big)=(z_1(b),z_2(b))\,.
\end{equation}
Notice that, if $b<\lambda(0)\,$, then $z_2(b)<0\,$. This follows from (3) in
Theorem~\ref{thm:Hel}.
\begin{thm}\label{thm:FH-ch14}(\cite[Sec.~14.2]{FH-b})
\begin{enumerate}
\item {The functional $\mathcal E^{1D}_{\alpha,b}$ has a non-trivial
minimizer in the space $B^1({\R})$ if and only if\break
$\lambda(\alpha)<b\,$. Furthermore, a non-trivial minimizer $f_\alpha$
can be found  which is a positive function and  $\pm f_\alpha$ are the only real-valued minimizers.}
%
\item Let
\begin{equation}\label{eq:be}
\be(\alpha,b)=\inf\{\mathcal E^{1D}_{\alpha,b}(f)~:~f\in B^1(\R)\}\,.
\end{equation}
There exists $\alpha_0\in (z_1(b),z_2(b))$ such that,
\begin{equation}\label{defalpha0}
\be(\alpha_0,b)=\inf_{\alpha\in\R}\be(\alpha,b)\,.
\end{equation}
\item If $b<\lambda(0)$, then $\alpha_0<0\,$.
\item (Feynman-Hellmann)
\begin{equation}\label{FH}
\int_{-\infty}^\infty\left(\frac{t^2}2+\alpha_0\right)|f_{\alpha_0}(t)|^2\,dt=0\,.
\end{equation}
\end{enumerate}
\end{thm}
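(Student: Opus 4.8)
The plan is to treat the four assertions in turn, all of them being reductions of the analysis of the one-dimensional Ginzburg--Landau-type functional $\mathcal E^{1D}_{\alpha,b}$ to spectral properties of the Montgomery family $P(\alpha)$ recalled in Theorem~\ref{thm:Hel}. For assertion (1), I would first establish that $\mathcal E^{1D}_{\alpha,b}$ attains its infimum over $B^1(\R)$: the quadratic part $\int |f'|^2 + (t^2/2+\alpha)^2|f|^2$ is precisely the form of the operator $P(\alpha)$, which has compact resolvent (the potential $(t^2/2+\alpha)^2\to\infty$), hence controls the $B^1(\R)$-norm, while the quartic term is nonnegative and weakly lower semicontinuous, and the cubic-in-energy negative term $-b\|f\|_2^2$ is weakly continuous on the resulting compact embedding; a minimizing sequence is therefore bounded in $B^1(\R)$ and a subsequence converges weakly, with no loss of mass by compactness. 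The Euler--Lagrange equation reads $P(\alpha)f = bf - b|f|^2 f$. Testing against $f$ gives $\|f'\|^2 + \|(t^2/2+\alpha)f\|^2 + b\|f\|_4^4 = b\|f\|_2^2$, so a nontrivial minimizer forces $\lambda(\alpha)\|f\|_2^2 \le \langle P(\alpha)f,f\rangle < b\|f\|_2^2$, i.e. $\lambda(\alpha)<b$; conversely, if $\lambda(\alpha)<b$ one tests the functional on $\varepsilon\varphi^\alpha$ (the ground state of $P(\alpha)$) and finds $\mathcal E^{1D}_{\alpha,b}(\varepsilon\varphi^\alpha)=\varepsilon^2(\lambda(\alpha)-b)+O(\varepsilon^4)<0$ for small $\varepsilon$, so the infimum is negative and the minimizer nontrivial. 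Positivity and uniqueness up to sign of the real minimizer follow by the standard argument: $|f|$ is also a minimizer (the functional decreases or is unchanged under $f\mapsto|f|$ since $\big||f|'\big| \le |f'|$ a.e.), $|f|$ solves the same elliptic equation and is nonnegative, so by the strong maximum principle / Harnack it is strictly positive, and any two positive minimizers coincide because the map $t\mapsto \mathcal E^{1D}_{\alpha,b}$ restricted to the nonnegative cone is strictly convex in $|f|^2$ in the relevant sense (this is exactly the one-dimensional version of the uniqueness lemma in \cite[Ch.~14]{FH-b}).

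For assertion (2), I would study $\alpha\mapsto\be(\alpha,b)$. It is continuous (uniform control of minimizers via the bound above plus dominated convergence), and $\be(\alpha,b)\le 0$ always (test with $f=0$), with $\be(\alpha,b)=0$ precisely when $\lambda(\alpha)\ge b$ by assertion (1). By Theorem~\ref{thm:Hel}(2), $\lambda(\alpha)\to\infty$ as $\alpha\to\pm\infty$, so $\be(\alpha,b)=0$ outside a compact interval; since $\be$ is continuous and $\le 0$, it attains its (negative, if $b>\lambda_0$) minimum at some $\alpha_0$, and this $\alpha_0$ must satisfy $\lambda(\alpha_0)<b$, hence $\alpha_0\in\lambda^{-1}([\tau_0,b))^{\complement}$... more precisely $\alpha_0\in(z_1(b),z_2(b))$ once one knows $\be$ is strictly negative there and zero on the complement — I would pin down that $\{\alpha:\lambda(\alpha)<b\}=(z_1(b),z_2(b))$ using that $\lambda$ has its unique minimum $\lambda_0$ at $\tau_0$ and is monotone on each side near the minimum (this is where \eqref{eq:z12} and the non-degeneracy of $\tau_0$ enter). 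Assertion (3): if $b<\lambda(0)$ then by Theorem~\ref{thm:Hel}(3) $\lambda$ is increasing on $[0,\infty)$ with $\lambda(0)>b$, so $\lambda(\alpha)\ge\lambda(0)>b$ for all $\alpha\ge 0$, forcing $\be(\alpha,b)=0$ there; since $\be(\alpha_0,b)<0$ (as $b>\lambda_0=\lambda(\tau_0)$ and $\tau_0<0$, so there is a point where $\be<0$), we get $\alpha_0<0$. This is the same mechanism already noted after \eqref{eq:z12} for $z_2(b)<0$.

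For assertion (4), the Feynman--Hellmann-type identity, I would differentiate in the parameter $\alpha$. For each $\alpha$ with $\lambda(\alpha)<b$ the minimizer $f_\alpha$ depends smoothly on $\alpha$ (simplicity of the relevant eigenvalue, implicit function theorem on the Euler--Lagrange map, plus uniqueness from (1)), and $\be(\alpha,b)=\mathcal E^{1D}_{\alpha,b}(f_\alpha)$. Since $f_\alpha$ is a critical point, the envelope theorem gives $\frac{\partial}{\partial\alpha}\be(\alpha,b)=\frac{\partial}{\partial\alpha}\mathcal E^{1D}_{\alpha,b}(f)\big|_{f=f_\alpha}=2\int_{-\infty}^\infty\big(\tfrac{t^2}{2}+\alpha\big)|f_\alpha(t)|^2\,dt$, because only the $(t^2/2+\alpha)^2|f|^2$ term carries explicit $\alpha$-dependence. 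At the interior minimizer $\alpha_0$ this derivative vanishes, which is exactly \eqref{FH}. I expect the main obstacle to be the regularity-and-uniqueness input needed to make the envelope argument rigorous — namely that $\alpha\mapsto f_\alpha$ is $C^1$ and that the minimizer is unique (up to sign) so that $\be$ is genuinely differentiable rather than merely one-sided differentiable; this is handled by combining the spectral non-degeneracy of the linearized operator $P(\alpha)-b+3b|f_\alpha|^2$ at the minimizer (it is invertible on the orthogonal complement, since $f_\alpha$ realizes a strict local minimum in a coercive setting) with the implicit function theorem, and it is precisely the content imported from \cite[Sec.~14.2]{FH-b}. Everything else is routine one-dimensional variational calculus.
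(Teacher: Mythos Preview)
Your proposal is correct and follows the expected route. The paper does not give its own proof: it simply states that the argument is an adaptation of \cite[Sec.~14.2]{FH-b} (there done for the de Gennes functional on the half-line), and your sketch is precisely what such an adaptation looks like.

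One simplification for assertion~(4): you do not need $C^1$ regularity of $\alpha\mapsto f_\alpha$ or the implicit function theorem. Fix any minimizer $f_{\alpha_0}$ and set $g(\alpha):=\mathcal E^{1D}_{\alpha,b}(f_{\alpha_0})$. Then $g(\alpha)\ge \be(\alpha,b)$ for all $\alpha$, with equality at $\alpha_0$; since $\be(\cdot,b)$ has an interior minimum at $\alpha_0$, so does $g$, and $g$ is manifestly $C^1$ in $\alpha$ with $g'(\alpha)=2\int(\tfrac{t^2}{2}+\alpha)|f_{\alpha_0}|^2\,dt$. Thus $g'(\alpha_0)=0$ gives \eqref{FH} directly. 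This bypasses the ``main obstacle'' you flagged.

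A minor point on~(2): you invoke monotonicity of $\lambda$ ``on each side near the minimum'' to identify $\{\lambda(\alpha)<b\}$ with the interval $(z_1(b),z_2(b))$. The paper simply \emph{defines} $(z_1(b),z_2(b))$ as this preimage in \eqref{eq:z12}, taking the interval structure from the known properties of the Montgomery eigenvalue in \cite{Hel}; you are right that this is where it enters, but it is an input rather than something to prove here.
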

The proof of this theorem can be obtained by adapting the analysis
of \cite[Sec.~14.2]{FH-b}  devoted to the functional
\begin{equation}\label{degennes}
\mathcal F^{1D}_{\alpha,b}(f)=\int_{0}^\infty\left(|f'(t)|^2+\left(t+\alpha\right)^2|f(t)|^2-b|f(t)|^2+\frac{b}2|f(t)|^4\right)\,dt\,.
\end{equation}
We note for future use that a minimizer of $\mathcal E^{1D}_{\alpha,b}$ satisfies the Euler-Lagrange equation:
\begin{equation}\label{Euler-Lagrange}
- f'' (t) + (\frac{t^2}{2}+\alpha)^2 f(t) - b f(t) + b f(t)^3 =0\,,
\end{equation}
and that $f\in \mathcal S(\mathbb R)$.

According to Theorem~\ref{thm:FH-ch14}, we observe that the
functional $\mathcal E^{1D}_{\alpha,b}$ has non-trivial minimizers
if and only if $\alpha\in (z_1(b),z_2(b))$. \\

\subsection{Reduced Ginzburg-Landau functional}~\\

Let $L>0$, $R>0$, $\mathcal S_R=(-R,R)\times \R$ and
\begin{equation}\label{eq-A-app}
\Ap(x)=\Big(-\frac{x_2^2}2,0\Big)\,,\quad
\Big(x=(x_1,x_2)\in\R^2\Big)\,.\end{equation}
Consider the functional
\begin{equation}\label{eq-gs-er''}
\mathcal E_{L,R}(u)=\int_{\mathcal S_R}\left(|(\nabla-i\Ap)u|^2-L^{-2/3}|u|^2+\frac{L^{-2/3}}{2}|u|^4\right)\,dx\,,
\end{equation}
and the ground state energy
\begin{equation}\label{eq-gs-er'}
\er(L;R)=\inf\{\mathcal E_{L,R}(u)~:~u\in H^1_0(\mathcal S_R)\}\,.
\end{equation}

The functional in \eqref{eq-gs-er''} has a minimizer (see
\cite[Theorem~3.6]{FKP-jmpa} or \cite{Pa02}).  Useful properties of
the minimizers are collected in the next theorem.

\begin{thm}\label{thm:FKP}
Let $L>0$, $R>0$ and $\varphi_{L,R}\in H^1_0(\mathcal S_R)$ be a minimizer of the
functional $\mathcal E_{L,R}$ in \eqref{eq-gs-er''}. There holds,
\begin{equation}\label{eq-phiR<1}
\|\varphi_{L,R}\|_\infty\leq 1\,.
\end{equation}

Furthermore, there exist universal positive  constants $C_1$ and
$C_2$ such that the following is true:
\begin{enumerate}
\item If $L<2^{-\frac 32}$ and  $R>0$, then
\begin{equation}\label{eq-dec-phiR10}
\int_{\mathcal S_R\cap\{|x_2|\geq
4L^{-2/3}\}}\frac{|x_2|^3}{(\ln
|x_2|)^2}|(\nabla-i\Ap)\varphi_{L,R}|^2\,dx \leq
C_1L^{-8/3}|\ln L|^{-2}\,R\,,
\end{equation}
\begin{equation}\label{eq-dec-phiR20}
\int_{\mathcal S_R\cap\{|x_2|\geq 4L^{-2/3}\}}\frac{|x_2|}{(\ln
|x_2|)^2} |\varphi_{L,R}(x)|^2\leq
C_1\,L^{-1/3}|\ln L|^{ -\frac 32}\,R\,.\end{equation}

\item If $L\geq 2^{-\frac 32} $ and $R>0$, then
\begin{equation}\label{eq-dec-phiR1}
\int_{\mathcal S_R\cap\{|x_2|\geq 8\}}\frac{|x_2|^3}{(\ln
|x_2|)^2}|(\nabla-i\Ap)\varphi_{L,R}|^2\,dx \leq C_2\,
L^{2/3}\,R\,,
\end{equation}
\begin{equation}\label{eq-dec-phiR2}
\int_{\mathcal S_R\cap\{|x_2|\geq 8\}}\frac{|x_2|}{(\ln
|x_2|)^2} |\varphi_{L,R}(x)|^2\leq
C_2 \,L^{2/3}\,R\,.\end{equation}
\end{enumerate}
\end{thm}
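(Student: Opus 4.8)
The $L^\infty$ bound \eqref{eq-phiR<1} is the standard truncation argument for Ginzburg--Landau minimizers. A minimizer $\varphi_{L,R}$ solves the Euler--Lagrange equation $-(\nabla-i\Ap)^2\varphi_{L,R}=L^{-2/3}(1-|\varphi_{L,R}|^2)\varphi_{L,R}$ in $\mathcal S_R$ with Dirichlet condition on $\partial\mathcal S_R$. For $u\in H^1_0(\mathcal S_R)$ put $\Pi u:=u/\max(1,|u|)$; writing $u=|u|e^{i\chi}$ on $\{u\neq0\}$ one verifies $|(\nabla-i\Ap)(\Pi u)|\le|(\nabla-i\Ap)u|$ a.e., and since $s\mapsto-s+\tfrac12 s^2$ is nondecreasing on $[1,\infty)$ one gets $\mathcal E_{L,R}(\Pi u)\le\mathcal E_{L,R}(u)$, with equality only if $|u|\le1$ a.e. Applied to $\varphi_{L,R}$ this forces $|\varphi_{L,R}|\le1$ a.e., and elliptic regularity for the Euler--Lagrange equation upgrades this to the pointwise bound \eqref{eq-phiR<1}.

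For the decay estimates the key input is a coercivity bound for the magnetic operator that reflects the growth of the field $\curl\Ap=x_2$ away from the line $\{x_2=0\}$: there is a universal constant $c_0>1/2$ with
$$\int_{\mathcal S_R}|(\nabla-i\Ap)u|^2\,dx\ \ge\ c_0\int_{\mathcal S_R}\max\big(1,|x_2|\big)\,|u|^2\,dx\qquad\text{for every }u\in H^1_0(\mathcal S_R),$$
uniformly in $R$. This follows from the two-dimensional factorization identities, which give $\int_{\mathcal S_R}|(\nabla-i\Ap)u|^2\,dx\ge\pm\int_{\mathcal S_R}x_2\,|u|^2\,dx$, combined with $\int_{\mathcal S_R}|(\nabla-i\Ap)u|^2\,dx\ge\lambda_0\int_{\mathcal S_R}|u|^2\,dx$ (the bottom of the operator $P$ in \eqref{eq-P} being $\lambda_0>0$): splitting $u$ by a smooth partition of unity across $\{x_2=0\}$, a localization whose cost is $O(1)$ here, and using the factorization inequality with the sign of $x_2$ on each half yields the linear growth, the $\lambda_0$-bound supplying the coercivity near $x_2=0$.

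The decay estimates then follow by an Agmon-type argument. Let $w=w_L$ be a Lipschitz function of $x_2$ alone, vanishing on $\{|x_2|\le h_0\}$, equal to $|x_2|^{1/2}(\ln|x_2|)^{-1}$ for $|x_2|\ge 2h_0$, and ramping up over the collar $\{h_0\le|x_2|\le 2h_0\}$, with $h_0$ a suitable multiple of $L^{-2/3}$ when $L<2^{-3/2}$ and a suitable constant when $L\ge2^{-3/2}$, chosen so that $c_0\max(1,|x_2|)-L^{-2/3}\ge\tfrac12 c_0|x_2|$ on $\{|x_2|\ge h_0\}$ (here $c_0>1/2$ is exactly what makes the thresholds $4L^{-2/3}$ and $8$ available). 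Multiplying the Euler--Lagrange equation by $w^2\overline{\varphi_{L,R}}$, integrating over $\mathcal S_R$ and integrating by parts (the boundary terms vanish by the Dirichlet condition on $x_1=\pm R$, and the contribution at $x_2=\pm\infty$ after first truncating $w$ at large $|x_2|$), yields
$$\int_{\mathcal S_R}\big|(\nabla-i\Ap)(w\varphi_{L,R})\big|^2\,dx\ =\ \int_{\mathcal S_R}|w'|^2|\varphi_{L,R}|^2\,dx\ +\ L^{-2/3}\int_{\mathcal S_R}w^2\big(1-|\varphi_{L,R}|^2\big)|\varphi_{L,R}|^2\,dx.$$
Bounding the left-hand side below by the coercivity estimate (legitimate since $w\varphi_{L,R}$ is supported in $\{|x_2|\ge h_0\}$), moving $L^{-2/3}w^2|\varphi_{L,R}|^2$ to the left and using the choice of $h_0$, one obtains
$$\int_{\mathcal S_R}\Big(\tfrac{c_0}{2}\,|x_2|-\big|\tfrac{w'}{w}\big|^2\Big)\,w^2\,|\varphi_{L,R}|^2\,dx\ \lesssim\ \int_{\{h_0\le|x_2|\le 2h_0\}}|w'|^2\,|\varphi_{L,R}|^2\,dx.$$
Since $|w'/w|^2\lesssim|x_2|^{-2}\ll|x_2|$ on $\{|x_2|\ge 2h_0\}$, the bracket is $\gtrsim|x_2|$ there, while the right-hand side is controlled by $\|\varphi_{L,R}\|_\infty\le1$ and $|\mathcal S_R\cap\{|x_2|\le2h_0\}|\lesssim h_0R$; this delivers \eqref{eq-dec-phiR20} (when $L<2^{-3/2}$) and \eqref{eq-dec-phiR2} (when $L\ge2^{-3/2}$). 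Repeating the identity with the heavier weight $w^2=|x_2|^3(\ln|x_2|)^{-2}$, the left-hand side now dominates $\int|x_2|^3(\ln|x_2|)^{-2}|(\nabla-i\Ap)\varphi_{L,R}|^2\,dx$ up to the commutator term $\int|w'|^2|\varphi_{L,R}|^2\,dx\lesssim\int|x_2|(\ln|x_2|)^{-2}|\varphi_{L,R}|^2\,dx$, which has just been estimated; this gives \eqref{eq-dec-phiR10}, resp. \eqref{eq-dec-phiR1}.

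The main point requiring care is the uniform coercivity bound: the linear growth in $|x_2|$ of the effective potential of the Montgomery-type operator $-(\nabla-i\Ap)^2$, with a constant $c_0>1/2$ independent of the width $2R$ of the strip and valid down to $L\to0^+$. The factorization argument makes this clean, but one must keep track of how the transition scale $h_0\sim L^{-2/3}$ interacts with the polynomial weights so as to land exactly on the stated thresholds $4L^{-2/3}$ and $8$ and the stated powers of $L$ and $|\ln L|$ (the logarithmic factors being a convenient normalization that makes the bounds uniform in both parameters). Everything else is bookkeeping: the collar estimate via $\|\varphi_{L,R}\|_\infty\le1$ and the measure of the transition region, and one iteration to pass from the $L^2$-weighted bound to the gradient-weighted one; the two regimes $L<2^{-\frac32}$ and $L\ge2^{-\frac32}$ are treated identically, differing only in whether the transition scale is of order $L^{-2/3}$ or of order one.
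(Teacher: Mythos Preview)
Your approach is correct and in fact somewhat cleaner than the paper's. Both proofs start from the same localized energy identity obtained by testing the Euler--Lagrange equation against $w^2\overline{\varphi_{L,R}}$, and both use the magnetic lower bound $\int|(\nabla-i\Ap)u|^2\ge\int|x_2|\,|u|^2$. The difference is in how the resulting inequality is closed. The paper chooses the single weight $\chi=|x_2|^{3/2}/\ln|x_2|$, exploits only the bare cancellation $|x_2|-L^{-2/3}\ge0$ on $\{|x_2|\ge\ell\}$, and is therefore forced to keep the nonlinear term $L^{-2/3}\int\chi^2|\varphi|^4$ and close via Cauchy--Schwarz (splitting $\frac{|x_2|}{(\ln|x_2|)^2}$ as $\frac{1}{|x_2|(\ln|x_2|)^2}\cdot|x_2|^2$), which produces the stated powers of $L$ and the $|\ln L|^{-3/2}$. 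You instead take the support threshold a fixed factor larger, gain the margin $|x_2|-L^{-2/3}\ge\tfrac12|x_2|$, drop the quartic term entirely, and absorb the far part of $\int|w'|^2|\varphi|^2$ directly since $|w'/w|^2\ll|x_2|$. This is shorter and actually yields sharper exponents than those stated (your bounds are of order $L^{2/3}|\ln L|^{-2}R$ and $L^{-4/3}|\ln L|^{-2}R$ rather than $L^{-1/3}|\ln L|^{-3/2}R$ and $L^{-8/3}|\ln L|^{-2}R$), which of course still implies the theorem.

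Two small points. First, your elaborate coercivity with $c_0>\tfrac12$ and $\max(1,|x_2|)$ is unnecessary: since $w\varphi_{L,R}$ is supported in $\{|x_2|\ge h_0\}$ with $h_0\ge2$, the plain inequality $\int|(\nabla-i\Ap)u|^2\ge\int|x_2|\,|u|^2$ (i.e.\ $c_0=1$) already does the job, and you avoid the partition-of-unity localization error you worry about. Second, in your passage to the gradient bound you write that the right-hand side of the identity is just $\int|w'|^2|\varphi|^2$, but it also contains $L^{-2/3}\int w^2(1-|\varphi|^2)|\varphi|^2\le L^{-2/3}\int w^2|\varphi|^2$, which with the heavier weight $w^2=|x_2|^3(\ln|x_2|)^{-2}$ has not yet been estimated. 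This is easily fixed: run the same Agmon step with the heavy weight first, so that $\int(|x_2|-L^{-2/3})w^2|\varphi|^2\le\int|w'|^2|\varphi|^2$ absorbs that term; then the commutator inequality $\int w^2|(\nabla-i\Ap)\varphi|^2\le2\int|(\nabla-i\Ap)(w\varphi)|^2+2\int|w'|^2|\varphi|^2$ gives the gradient estimate from $\int|w'|^2|\varphi|^2$ alone.
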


\begin{proof}~\\
The minimizer $\varphi_{L,R}$ satisfies the Ginzburg-Landau
equation,
\begin{equation}\label{eq-GL-r}
-(\nabla-i\Ap)^2\varphi_{L,R}=L^{-2/3}(1-|\varphi_{L,R}|^2)\varphi_{L,R}\,.\end{equation}
Hence \eqref{eq-phiR<1} results from the strong maximum principle.

Let
\begin{equation}\label{choiceofl}
\ell=\max(2,L^{-2/3})
\end{equation}
 and $\chi(x_2)$ be  a nonnegative even
smooth function satisfying
$$\chi(x_2)=0\quad{\rm if~}|x_2|\leq \ell\,,\qquad
\chi(x_2)=\frac{|x_2|^{3/2}}{\ln|x_2|}\quad{\rm if}~|x_2|\geq 4\ell\,.$$
Clearly,  $\chi'$ satisfies with our choice of $\ell$,
$$
0<|\chi'(x_2)|\leq\frac{3\sqrt{|x_2|}}{2\ln {\Be |} x_2{\Be |}}\quad{\rm if}~|x_2|\geq 4\ell\,,$$
and one can choose $\chi $ on $[\ell, 4 \ell]$   such that $\chi'$ satisfies,
$$|\chi'(x_2)|\leq \widehat C\,  \ell^{1/2}\,|\ln\ell|^{-1}\quad{\rm if~}|x_2|\leq 4\ell\,,$$
for some constant $\widehat C$ independent of $\ell\,$.\\
Multiplying both sides of \eqref{eq-GL-r} by $\chi^2\varphi_{L,R}$
and integrating by parts yield,
\begin{equation}\label{eq-GL-ibp}
\int_{\mathcal S_R}
\left(|(\nabla-i\Ap)\chi\varphi_{L,R}|^2-L^{-2/3}|\chi\varphi_{L,R}|^2+L^{-2/3}\chi^2|\varphi_{L,R}|^4
\right)\,dx=
\int_{\mathcal S_R}|\chi' (x_2) \,\varphi_{L,R}(x)|^2\,dx\,.
\end{equation}
The following inequality is standard in the spectral analysis of
the Schr\"odinger operators with magnetic fields \cite[Lemma~1.4.1]{FH-b},
\begin{equation}\label{eq:lb}
\int_{\mathcal S_R}|(\nabla-i\Ap)\chi\varphi_{L,R}|^2\,dx \geq
\int_{\mathcal S_R} |\curl\Ap (x)|\,|\chi(x_2) \varphi_{L,R}(x)|^2\,dx =
\int_{\mathcal S_R}|x_2|\, |\chi\varphi_{L,R}|^2\,dx\,.
\end{equation}
Since the function $x\mapsto \chi(x_2) \varphi_{L,R}(x)$ is supported in $\mathcal
S_R\cap\{|x_2|\geq \ell\}$ and $$ \ell=\max(2,L^{-2/3})\geq L^{-2/3}\,,$$
we infer from the previous inequality that,
$$
\int_{\mathcal S_R}|(\nabla-i\Ap)\chi \varphi_{L,R} |^2\,dx
 \geq L^{-2/3}\int_{\mathcal S_R}
|\chi(x_2) \varphi_{L,R}(x)|^2\,dx\,.
$$
This inequality allows us to deduce from \eqref{eq-GL-ibp} that,
\begin{equation}\label{eq-GL-ibp'}
L^{-2/3}\int_{\mathcal S_R}
\chi(x_2)^2|\varphi_{L,R}(x)|^4
\,dx\leq
\int_{\mathcal S_R}|\chi' (x_2)\, \,\varphi_{L,R}(x)|^2\,dx\,.
\end{equation}
We obtain an upper bound of the term on the right side as follows.
First, we use the Cauchy-Schwarz inequality to write,
\begin{align}
\int_{\mathcal S_R\cap\{|x_2|\geq 4\ell\}}&\frac{|x_2|}{(\ln|x_2|)^2}\,|\varphi_{L,R}(x)|^2\,dx\nonumber\\
&\leq\left(\int_{\mathcal S_R\cap\{|x_2|\geq 4\ell\}}\frac1{|x_2|(\ln|x_2|)^2}\,dx\right)^{1/2}
\left(\int_{\mathcal S_R\cap\{|x_2|\geq 4\ell\}}
\frac{|x_2|^3}{(\ln|x_2|)^2}\,|\varphi_{L,R}(x)|^4\,dx\right)^{1/2}\nonumber\\
&\leq C \,  |\ln \ell|^{-\frac 12}  R^{1/2}\left(\int_{\mathcal S_R\cap\{|x_2|\geq 4\ell\}}
\frac{|x_2|^3}{(\ln|x_2|)^2}\,|\varphi_{L,R}(x)|^4\,dx\right)^{1/2}\,.\label{eq:R20''}
\end{align}
Next we use the assumptions on $\chi$, \eqref{eq-phiR<1} and
\eqref{eq:R20''},
\begin{align}
&\int_{\mathcal S_R}|\chi' (x_2)\, \,\varphi_{L,R}(x)|^2\,dx\nonumber\\
&\leq
\int_{\mathcal S_R\cap\{|x_2|\geq 4\ell\}}
\frac{9|x_2|}{4(\ln|x_2|)^2}\,|\varphi_{L,R}(x)|^2\,dx+\int_{\mathcal
S_R\cap\{|x_2|\leq 4\ell\}} |\chi' (x_2)|^2\,|\varphi_{L,R}(x)|^2\,dx \nonumber\\
&\leq C\,  |\ln \ell|^{-\frac 12}  R^{1/2}\left(\int_{\mathcal S_R\cap\{|x_2|\geq 4\ell\}}
\frac{|x_2|^3}{(\ln|x_2|)^2}\,|\varphi_{L,R}(x)|^4\,dx\right)^{1/2}+C\, \ell^2 (\ln\ell)^{-2}\, R \,.\label{eq:R20'''}
\end{align}
Now we use the assumption on $\chi$, \eqref{eq-GL-ibp'} and
\eqref{eq:R20'''} to get,
\begin{align*}
L^{-2/3}&\int_{\mathcal S_R\cap\{|x_2|\geq 4\ell\}}
\frac{|x_2|^3}{(\ln|x_2|)^2}\,|\varphi_{L,R}(x)|^4 \,dx \\
&\leq L^{-2/3}\int_{\mathcal S_R}
|\chi(x_2)\,\varphi_{L,R}(x)|^4
\,dx\\
&\leq C \,  |\ln \ell|^{-\frac 12}  R^{1/2} \left( \int_{\mathcal S_R\cap\{|x_2|\geq 4\ell\}}
\frac{|x_2|^3}{4(\ln|x_2|)^2}\,|\varphi_{L,R}(x) |^4\,dx\right)^{1/2}+C \, \ell^2(\ln\ell)^{-2} \, R \,.
\end{align*}
As a consequence we obtain with a new constant $C$  the following
inequality,
\begin{equation}\label{eq:R20-4'}
\int_{\mathcal S_R\cap\{|x_2|\geq 4\ell\}}
\frac{|x_2|^3}{(\ln|x_2|)^2}\,|\varphi_{L,R}(x)|^4 \,dx\leq C \left(   L^\frac 23   |\ln \ell|^{-1}  + \ell^2 (\ln\ell)^{-2}\right)\,L^\frac 23 R \,.
\end{equation}
Next, we insert \eqref{eq:R20-4'} into \eqref{eq:R20''} and get,
\begin{equation}
\int_{\mathcal S_R\cap\{|x_2|\geq 4\ell\}}\frac{|x_2|}{(\ln|x_2|)^2}\,|\varphi_{L,R}(x)|^2\,dx
\leq C\, |\ln \ell|^{-\frac 12}   \left(   L^\frac 23   |\ln \ell|^{-1}  + \ell^2 (\ln\ell)^{-2}\right)^\frac 12 \,L^\frac 13 R\,.\label{eq:R20'}
\end{equation}
Again, the choice of $\ell$ allows us to deduce
\eqref{eq-dec-phiR20} and \eqref{eq-dec-phiR2} from
\eqref{eq:R20''}.\\

Now, we show how to get the two inequalities in
\eqref{eq-dec-phiR10} and  \eqref{eq-dec-phiR1}. A simple
decomposition of the integral below and the inequality in
\eqref{eq-phiR<1} yield,
$$
\int_{\mathcal S_R} |\chi (x_2)\varphi_{L,R} (x)|^2\,dx\leq
\int_{\mathcal S_R\cap \{|x_2|\geq 4\ell\}} |\chi(x_2) \varphi_{L,R}(x)|^2\,dx+C\, \ell^4 |\ln \ell|^{-2}\,  R\,.
$$
The next inequality is an easy consequence of \eqref{eq:lb},
$$
\int_{\mathcal S_R}|(\nabla-i\Ap)\chi\varphi_{L,R}|^2\,dx \geq
4\ell \int_{\mathcal S_R\cap\{|x_2|\geq 4\ell\} } |\chi(x_2) \varphi_{L,R}(x)|^2\,dx\,.
$$
We insert these two inequalities into \eqref{eq-GL-ibp}. We get,
\begin{equation}\label{eq:R3}\frac12\int_{\mathcal
S_R}|(\nabla-i\Ap)\chi\varphi_{L,R}|^2\,dx\leq \int_{\mathcal
S_R}|\chi'(x_2)\,\varphi_{L,R}(x)|^2\,dx+CL^{-2/3} \ell^4 |\ln \ell|^{-2} R\,.\end{equation} Now we use
a simple commutator and get the following inequality,
$$
\int_{\mathcal
S_R}|(\nabla-i\Ap)\chi\varphi_{L,R}|^2\,dx\geq \frac12\int_{\mathcal
S_R}|\chi(x_2) (\nabla-i\Ap)\varphi_{L,R}|^2\,dx-4\int_{\mathcal
S_R}|\chi'(x_2)\varphi_{L,R}(x)|^2\,dx\,.
$$
As a consequence, we infer from \eqref{eq:R3},
\begin{equation}\label{eq:R3'}\frac14\int_{\mathcal
S_R}|\chi(\nabla-i\Ap)\varphi_{L,R}|^2\,dx\leq 3\int_{\mathcal
S_R}|\chi'(x_2)\,\varphi_{L,R}(x)|^2\,dx+CL^{-2/3} \ell^4 |\ln \ell|^{-2} R\,.\end{equation}
   The term on the
right side is controlled using \eqref{eq:R20'} and
\eqref{eq:R20-4'}. In that way we get,
$$\frac12\int_{\mathcal
S_R}|\chi(\nabla-i\Ap)\varphi_{L,R}|^2\,dx\leq
C\sqrt{L^{4/3}+L^{2/3}\ell^2(\ln\ell)^{-2}}\,R+C  \ell^4 |\ln \ell|^{-2} R\,.$$ Thanks to the choice of $\chi$ and $\ell$, we deduce the inequalities in \eqref{eq-dec-phiR10} and \eqref{eq-dec-phiR1}.
\end{proof}

\begin{rem}\label{rem:L<Lambda}
Let $\Lambda>0\,$. The inequalities in
\eqref{eq-dec-phiR10}-\eqref{eq-dec-phiR2} remain true under the
assumption that $L\leq \Lambda$ with the constant $C_1$ replaced by
another constant $C_\Lambda$.
\end{rem}

As a consequence of Theorem~\ref{thm:FKP}, we can obtain a uniform
estimate of the energy of a minimizer $\varphi_{L,R}\,$.

\begin{prop}\label{prop:dec-m}
Let $\Lambda>0\,$. There exists a positive constant $C_\Lambda$  such that, for all $L\in\,(0,\Lambda)$ and $R>0$\,,
\begin{multline*}
\int_{\mathcal S_R} |\varphi_{L,R}(x)|^2
\,dx\leq C_\Lambda L^{-2/3}R\quad  {\rm and}\quad \int_{\mathcal
S_R}|(\nabla-i\Ap)\varphi_{L,R}|^2\,dx\leq C_\Lambda
L^{-4/3}R\,.\end{multline*}
\end{prop}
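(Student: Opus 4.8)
The plan is to exploit the minimality of $\varphi_{L,R}$ together with the decay estimates of Theorem~\ref{thm:FKP}. Since $\varphi_{L,R}$ is a minimizer, testing $\mathcal E_{L,R}$ against the trivial configuration $u=0$ gives $\mathcal E_{L,R}(\varphi_{L,R})\leq 0$, which, after rearranging the quartic term, yields
\begin{equation}\label{eq:energy-ineq}
\int_{\mathcal S_R}|(\nabla-i\Ap)\varphi_{L,R}|^2\,dx+\frac{L^{-2/3}}{2}\int_{\mathcal S_R}|\varphi_{L,R}|^4\,dx\leq L^{-2/3}\int_{\mathcal S_R}|\varphi_{L,R}|^2\,dx\,.
\end{equation}
Hence both desired bounds will follow once we control $\int_{\mathcal S_R}|\varphi_{L,R}|^2\,dx$ by $C_\Lambda L^{-2/3}R$, because then the right-hand side of \eqref{eq:energy-ineq} is bounded by $C_\Lambda L^{-4/3}R$, which dominates the gradient term directly; the $L^2$ bound is then the substantive content.

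First I would split the $L^2$-mass according to the threshold $|x_2|\geq 4\ell$ versus $|x_2|<4\ell$, where $\ell=\max(2,L^{-2/3})$ as in the proof of Theorem~\ref{thm:FKP}. On the region $\{|x_2|<4\ell\}\cap\mathcal S_R$, we use $\|\varphi_{L,R}\|_\infty\leq 1$ from \eqref{eq-phiR<1} to bound the contribution by $|\{|x_2|<4\ell\}\cap\mathcal S_R|\leq 16\ell R$; since $\ell\leq \max(2,\Lambda^{-2/3})\cdot\max(1,L^{-2/3})\lesssim_\Lambda L^{-2/3}$ for $L<\Lambda$ (using $L^{-2/3}\geq \Lambda^{-2/3}$ so that the constant $2$ is absorbed), this is $\lesssim_\Lambda L^{-2/3}R$. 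On the region $\{|x_2|\geq 4\ell\}$, I would compare $\int|\varphi_{L,R}|^2$ with the weighted quantity already estimated in \eqref{eq-dec-phiR20} (for $L<2^{-3/2}$, via Remark~\ref{rem:L<Lambda} for the range $L\in[2^{-3/2},\Lambda)$): on $\{|x_2|\geq 4\ell\}$ one has $\frac{|x_2|}{(\ln|x_2|)^2}\geq c\,\frac{4\ell}{(\ln 4\ell)^2}$ up to the growth of the weight, so
\[
\int_{\mathcal S_R\cap\{|x_2|\geq 4\ell\}}|\varphi_{L,R}|^2\,dx\leq \frac{(\ln 4\ell)^2}{4\ell}\int_{\mathcal S_R\cap\{|x_2|\geq 4\ell\}}\frac{|x_2|}{(\ln|x_2|)^2}|\varphi_{L,R}|^2\,dx\,.
\]
Here one must be slightly careful: the weight $\frac{|x_2|}{(\ln|x_2|)^2}$ is increasing for large $|x_2|$, so the pointwise inequality $\frac{|x_2|}{(\ln|x_2|)^2}\geq \frac{4\ell}{(\ln 4\ell)^2}$ does hold on $\{|x_2|\geq 4\ell\}$ once $4\ell$ is past the (absolute) location of the minimum of this function; since $4\ell\geq 8$, this is fine. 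Combining with \eqref{eq-dec-phiR20}, the right-hand side is $\lesssim_\Lambda \frac{(\ln\ell)^2}{\ell}\cdot L^{-1/3}|\ln L|^{-3/2}R$; using $\ell\approx L^{-2/3}$ and $\ln\ell\approx |\ln L|$, this reduces to $\lesssim_\Lambda L^{2/3}\cdot L^{-1/3}|\ln L|^{1/2}R=L^{1/3}|\ln L|^{1/2}R$, which is far smaller than $L^{-2/3}R$ for $L$ bounded. Adding the two regions gives the claimed $L^2$ bound.

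The main obstacle, and the only place requiring care, is the bookkeeping of the $\ell$-dependent constants when $L$ ranges over all of $(0,\Lambda)$ rather than over $(0,2^{-3/2})$ — i.e. switching cleanly between cases (1) and (2) of Theorem~\ref{thm:FKP} and invoking Remark~\ref{rem:L<Lambda} so that a single constant $C_\Lambda$ works uniformly. In particular, for $L\in[2^{-3/2},\Lambda)$ one has $\ell=L^{-2/3}$ or $\ell=2$ depending on whether $L\lessgtr 2^{-3/2}$, and the logarithmic factors $|\ln L|$ are bounded above and below on any compact subinterval of $(0,\Lambda)$ away from — wait, not away from $0$; near $L\to 0$ the factor $|\ln L|\to\infty$, but it enters only with a favorable power ($L^{1/3}|\ln L|^{1/2}\to 0$), so no difficulty arises. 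Once these elementary estimates are assembled, the gradient bound follows immediately by feeding the $L^2$ bound into \eqref{eq:energy-ineq}, completing the proof.
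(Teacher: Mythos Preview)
Your proof is correct and follows essentially the same approach as the paper: split the $L^2$-mass at $|x_2|=4\ell$ with $\ell=\max(2,L^{-2/3})$, use $\|\varphi_{L,R}\|_\infty\leq 1$ on the inner strip, invoke the weighted decay estimates of Theorem~\ref{thm:FKP} on the outer region, and then deduce the gradient bound from the $L^2$ bound. The only cosmetic differences are that the paper uses the cruder universal lower bound $\frac{|x_2|}{(\ln|x_2|)^2}\geq a>0$ on $\{|x_2|\geq 8\}$ (rather than your sharper $\ell$-dependent bound, which is unnecessary), and obtains the gradient estimate by multiplying the Euler--Lagrange equation by $\overline{\varphi_{L,R}}$ and integrating by parts (rather than via $\mathcal E_{L,R}(\varphi_{L,R})\leq 0$); both routes give $\int|(\nabla-i\Ap)\varphi_{L,R}|^2\leq L^{-2/3}\int|\varphi_{L,R}|^2$.
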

\begin{proof}
Let $\ell=\max(2,L^{-2/3})$.  As a consequence of the
inequalities in Theorem~\ref{thm:FKP}, we have,
$$
\int_{\mathcal S_R\cap\{|x_2|\geq 4\ell\}}
\frac{|x_2|}{(\ln|x_2|)^2} |\varphi_{L,R}(x)|^2
\,dx\leq C\,\bar \lambda\,R\,,
$$
where
$$\bar \lambda=L^{-2/3}(1+|\ln
L|)^{-2}+L^{4/3}\,.
$$
Since $\ell\geq 2$, then there exists a universal constant $a$ such
that
$$|x_2|\geq 4\ell\implies \frac{|x_2|}{(\ln|x_2|)^2}\geq
a>0\,.$$
Consequently, we get,
$$
\int_{\mathcal S_R\cap\{|x_2|\geq 4\ell\}}
|\varphi_{L,R}(x)|^2
\,dx\leq  \frac{C \bar \lambda}{a}\,R\,.
$$
On the other hand, using \eqref{eq-phiR<1}, we see that,
$$
\int_{\mathcal S_R\cap\{|x_2|\leq 4\ell\}}
|\varphi_{L,R}(x) |^2
\,dx\leq C\ell\,R\,.
$$
A combination of both inequalities lead us to
$$
\int_{\mathcal S_R}
|\varphi_{L,R}(x)|^2
\,dx\leq C\ell\,R+\frac{C\bar \lambda}{a}\,R\,.
$$
We have $\ell\sim L^{-2/3}$ and $\bar \lambda\sim L^{-2/3}|\ln L|^{-2}$
as  $L\to0_+$.  Hence, we get
$$
\int_{\mathcal S_R}
|\varphi_{L,R}(x)|^2
\,dx\leq C_\Lambda L^{-2/3}\,R\,.
$$
To finish the proof of the theorem, we multiply both sides of
\eqref{eq-GL-r} by $\overline{\varphi_{L,R}}$ and integrate by
parts. In that way we obtain
$$\int_{\mathcal S_R}|(\nabla-i\Ap)\varphi_{L,R}|^2\,dx\leq L^{-2/3}\int_{\mathcal S_R}|\varphi_{L,R}(x)|^2\,dx\leq C_\Lambda L^{-4/3}\,R\,.$$
\end{proof}

The value of the ground state energy defined in \eqref{eq-gs-er'} is
connected to the eigenvalue $\lambda_0$ in \eqref{eq-lambda0}.

\begin{prop}\label{eq-gs-lambda0}
For $R>0$, $L>0$, if $\er(L;R)$ denotes the ground state energy in
\eqref{eq-gs-er'}, there holds:
\begin{enumerate}
\item If $L\geq \lambda_0^{-3/2}$, then $\er(L;R)=0$\,.
\item There exist positive constants $C_1$, $C_2$ and $C_3$ such that, if   $L< \lambda_0^{-3/2}$ and $R>0$, then
\begin{equation}
-C_1\, L^{-4/3} R  \leq \frac{\er(L;R)}{(1-\lambda_0L^{2/3}) } \leq -C_2\, L^{-2/3} R+\displaystyle\frac{C_3}{R}\,.
\end{equation}
\end{enumerate}
\end{prop}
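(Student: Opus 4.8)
The plan is to extract one spectral input — that the magnetic Dirichlet quadratic form on $\mathcal{S}_R$ dominates $\lambda_0\|\cdot\|_{L^2}^2$ — and combine it with the a priori bounds of Proposition~\ref{prop:dec-m} for the two lower estimates, while the upper estimate in part~(2) is obtained from an explicit trial state built from the generalized ground state of the Montgomery operator $P$.

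I first record the spectral fact. The operator $-(\nabla-i\Ap)^2$ on $L^2(\mathbb{R}^2)$ is, after complex conjugation, the operator $P$ of \eqref{eq-P}; hence its quadratic form satisfies $\int_{\mathbb R^2}|(\nabla-i\Ap)v|^2\,dx\ge\lambda_0\int_{\mathbb R^2}|v|^2\,dx$. Extending a $u\in H^1_0(\mathcal S_R)$ with $\mathcal E_{L,R}(u)<\infty$ by zero places it in this form domain, so $\int_{\mathcal S_R}|(\nabla-i\Ap)u|^2\,dx\ge\lambda_0\int_{\mathcal S_R}|u|^2\,dx$. Dropping the nonnegative quartic term of $\mathcal E_{L,R}$ then gives $\mathcal E_{L,R}(u)\ge(\lambda_0-L^{-2/3})\|u\|_2^2+\tfrac12 L^{-2/3}\|u\|_4^4$. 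If $L\ge\lambda_0^{-3/2}$, i.e.\ $L^{-2/3}\le\lambda_0$, the right-hand side is nonnegative, and since $\mathcal E_{L,R}(0)=0$ this proves $\er(L;R)=0$, which is (1). If $L<\lambda_0^{-3/2}$, I apply the same inequality to a minimizer $\varphi_{L,R}$ (which exists by the discussion preceding Theorem~\ref{thm:FKP}), write $\lambda_0-L^{-2/3}=-L^{-2/3}(1-\lambda_0 L^{2/3})$, and use Proposition~\ref{prop:dec-m} with $\Lambda=\lambda_0^{-3/2}$ to get $\|\varphi_{L,R}\|_2^2\le C_\Lambda L^{-2/3}R$. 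This yields $\er(L;R)\ge -C_\Lambda(1-\lambda_0 L^{2/3})L^{-4/3}R$, i.e.\ the left-hand inequality of (2) with $C_1=C_\Lambda$.

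For the upper bound in (2) I use a trial state adapted to $-(\nabla-i\Ap)^2$. Complex conjugating $\psi_0$ from \eqref{eq-psi0} gives $\psi_0^\sharp(x)=e^{i\tau_0 x_1}\varphi_0(x_2)$ with $-(\nabla-i\Ap)^2\psi_0^\sharp=\lambda_0\psi_0^\sharp$; it is Schwartz in $x_2$ and $\|\psi_0^\sharp(x_1,\cdot)\|_{L^2(\mathbb R)}=1$ for every $x_1$. I take $\chi_R\in C^\infty_c((-R,R))$ with $\chi_R\equiv 1$ on $(-R/2,R/2)$, $0\le\chi_R\le1$, $|\chi_R'|\le C/R$ (so the boundary layer has width comparable to $R$, not a fixed width), and set $u_t=t\,\chi_R\psi_0^\sharp\in H^1_0(\mathcal S_R)$. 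The localization identity $\|(\nabla-i\Ap)(\chi_R\psi_0^\sharp)\|_2^2=\lambda_0\|\chi_R\psi_0^\sharp\|_2^2+\|(\nabla\chi_R)\psi_0^\sharp\|_2^2$ together with the $x_2$-normalization of $\psi_0^\sharp$ gives
\[
\mathcal E_{L,R}(u_t)=-t^2(L^{-2/3}-\lambda_0)\,a+t^2 b+\tfrac12 L^{-2/3}t^4 c,
\]
with $a=\|\chi_R\psi_0^\sharp\|_2^2\approx R$, $c=\|\chi_R\psi_0^\sharp\|_4^4\approx R$, $b=\|(\nabla\chi_R)\psi_0^\sharp\|_2^2\lesssim R^{-1}$. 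Optimizing in the amplitude, $t^2=(L^{-2/3}-\lambda_0)a/(L^{-2/3}c)$, and substituting $L^{-2/3}-\lambda_0=L^{-2/3}(1-\lambda_0 L^{2/3})$, the bound $\er(L;R)\le\mathcal E_{L,R}(u_t)$ turns into an inequality of the shape $\er(L;R)\le -c'(1-\lambda_0 L^{2/3})^2 L^{-2/3}R+C(1-\lambda_0 L^{2/3})R^{-1}$; dividing by $1-\lambda_0 L^{2/3}$ then produces the right-hand inequality of (2).

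The routine points — the localization identity and the estimates $a,c\approx R$, $b\lesssim R^{-1}$ — follow from the normalization and rapid decay of $\varphi_0$ and the support and derivative bounds on $\chi_R$. The main obstacle is to arrange the trial state and the cutoff so as to obtain \emph{simultaneously} the correct bulk scaling $L^{-2/3}R$ and a boundary correction that is only $O(1/R)$, uniformly down to the threshold $L=\lambda_0^{-3/2}$, where every quantity in the statement degenerates; this is precisely where the width-$\asymp R$ choice of collar is needed and where the two inequalities of (2) must be reconciled (one may, if necessary, treat $L$ bounded away from $\lambda_0^{-3/2}$ and $L$ near the threshold separately, using $\er(L;R)\le 0$ in the latter range when $R$ is small).
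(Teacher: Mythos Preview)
Your proof is correct and follows essentially the same route as the paper: part~(1) and the lower bound in~(2) come from the spectral inequality $\int|(\nabla-i\Ap)u|^2\ge\lambda_0\|u\|_2^2$ combined with Proposition~\ref{prop:dec-m}, and the upper bound in~(2) comes from the trial state $t\,\chi_R(x_1)\psi_0$ (the paper uses $\psi_0$ rather than its conjugate, but this is immaterial), with the same integration-by-parts identity and the same optimization in $t$. Your observation that the upper bound actually comes out as $-c'(1-\lambda_0L^{2/3})L^{-2/3}R+C/R$ after dividing, i.e.\ with an extra factor $(1-\lambda_0L^{2/3})$ on the leading term, is exactly what the paper's argument produces as well; this is a harmless imprecision in the stated constants rather than a defect in your proof.
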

\begin{proof}~\\
Suppose that $L\geq \lambda_0^{-3/2}$. Let $u\in H^1_0(\mathcal
S_R)$. The min-max principle and the condition on $L$ tell us that
$\mathcal E_{L,R}(u)\geq 0$ and consequently $\er(L;R)\geq0\,$. But $\er(L;R)\leq \mathcal E_{L,R}(0)=0\,$. This proves the statement in (1).\\
Now, suppose that $L<\lambda_0^{-3/2}$. Let $\theta \in
C_c^\infty(\R)$ be a function satisfying, \begin{equation*} {\rm
supp}\,\theta\subset (-1,1)\,,\quad 0\leq\theta \leq  1\,,\quad
\theta =1~{\rm in~}(-1/2,1/2)\,,
\end{equation*}
 and let
 $$\theta_R(x):= \theta(x/R)\,.$$
Let $t>0$ and $$ u(x_1,x_2)=t\,\theta_R(x_1)\,\psi_0(x_1,x_2)\,,$$ where
$\psi_0$ is the function in \eqref{eq-psi0}. \\
Recall that $\psi_0$ satisfies $-(\nabla-i\Ap)^2\,
\psi_0=\lambda_0\psi_0\,$. An integration by parts yields,
\begin{align*}\int_{\mathcal
S_R}|(\nabla-i\Ap)u|^2\,dx&=t^2\left(\Big\langle \theta_R(x_1)^2
\psi_0\,,\,-(\nabla-i\Ap)^2\psi_0\Big\rangle+\int_{\mathcal
S_R}|\phi_0(x_2)  \theta_R'(x_1) |^2\,dx\right)\\
& =t^2\left(\Big\langle \theta_R(x_1)^2
\psi_0\,,\,-(\nabla-i\Ap)^2\psi_0\Big\rangle+ \frac 1R \int
  \theta'(x_1) ^2\,dx_1\right)\\
&=t^2\left(\lambda_0\int_{\mathcal
S_R}|\theta_R(x_1) \psi_0(x)|^2\,dx+\frac{C}{R}\right)\,.
\end{align*}
As a consequence, we get that,
\begin{align*}
\er(L,R)\leq \mathcal
E_{L,R}(u)&\leq t^2\left(\lambda_0-L^{-2/3}\right)\int_{\mathcal
S_R}|\theta_R\psi_0|^2\,dx+t^2\frac{C}{R}+t^4\frac{L^{-2/3}}2
\int_{\mathcal S_R}|\theta_R(x_1) \psi_0(x)|^4dx\,.\\
&\leq t^2\left(R\left(\lambda_0-L^{-2/3}\right) +\frac{C}{R}\right)+R \nu L^{-2/3}\,t^4\,.
\end{align*}
Here
$$\nu =\displaystyle\int_\R|\varphi_0(x_2)|^4\,dx_2$$ and $\varphi_0$ is
the $L^2$-normalized function introduced in \eqref{eq-phiR<1}.

Selecting $t$ such that
$$\left(\lambda_0-L^{-2/3}\right)+\nu L^{-2/3}\,t^2=\frac12\left(\lambda_0-L^{-2/3}\right)$$
finishes  the proof of the upper bound.

The lower bound is obtained as follows.  Let $\varphi_{L,R}$ be the
minimizer in Theorem~\ref{thm:FKP}. It follows from the min-max
principle that,
$$\er(L;R)=\mathcal E_{L,R}(\varphi_{L,R})\geq
L^{-2/3}(\lambda_0L^{2/3}-1)\int_{\mathcal
S_R}|\varphi_{L,R}(x)|^2\,dx\,.$$ Under the assumption
$L<\lambda_0^{-2/3}$, Proposition~\ref{prop:dec-m} tells us that
$$\int_{\mathcal
S_R}|\varphi_{L,R}(x)|^2\,dx\leq C_1L^{-2/3}R\,.$$
{As a consequence, we get the lower bound.}
\end{proof}

\begin{rem}\label{rem:1}~\\
In light  of Propositions~\ref{prop:dec-m}~and~\ref{eq-gs-lambda0},
we observe that:
\begin{enumerate}
\item If $L\geq \lambda_0^{-2/3}$, then  $\varphi_{L,R}=0$ is the
minimizer of the functional in \eqref{eq-gs-er''} realizing the
ground  state energy in \eqref{eq-gs-er'}.
\item   If $L\leq \lambda_0^{-2/3}$,  every minimizer $\varphi_{L,R}$ satisfies,
\begin{equation}
\int_{\mathcal S_R}|(\nabla-i\Ap)\varphi_{L,R}(x)|^2\,dx\leq CL^{-4/3}\,,\quad \int_{\mathcal S_R} |\varphi_{L,R}(x)|^2\,dx\leq
CL^{-2/3} R\,,
\end{equation}
 where $C$ is a universal constant.
\end{enumerate}
\end{rem}

Notice that the energy $\mathcal E_{L,R}(u)$ in \eqref{eq-gs-er''}
is invariant under translation along the $x_1$-axis. This allows us
to follow the approach in \cite{FK-cpde, Pa02} and obtain that the
limit of $\frac{\er(L;R)}{R}$ as $R\to \infty$ exists. The precise
statement is:

\begin{thm}\label{thm-FK}
{Given $L>0$, there exists $E(L)\leq 0$ such that,
$$\lim_{R\to\infty}\frac{\er(L;R)}{2R}=E(L)\,.$$
The function $(0,\infty)\,\ni L\mapsto
E(L)\in\,(-\infty,0]$ is continuous, monotone increasing and
$$E(L)=0\quad \mbox{ if and only if}~L\geq \lambda_0^{-3/2}\,.$$
Furthermore,
\begin{equation}
\forall~R >0\,,\forall L>0\,,\quad
E(L)\leq \frac{\er(L;R)}{2R} \,,
\end{equation}
and there exists a  constant $C$ such that\begin{equation}
\forall~R\geq 2\,,\forall L>0\,,\quad
 \frac{\er(L;R)}{2R}\leq E(L)+C\left(1+L^{-2/3}\right)R^{-2/3}\,.
\end{equation}
}
\end{thm}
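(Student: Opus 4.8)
\medskip\noindent\emph{Existence of the limit and the lower bound.} The plan is to run the classical subadditivity (Fekete) argument for this translation-invariant ground-state energy, to read off the qualitative properties from the monotonicity of $\mathcal E_{L,R}$ in $L$ and from Proposition~\ref{eq-gs-lambda0}, and then to work harder for the quantitative upper bound. For the first point: since $\Ap$ in \eqref{eq-A-app} does not depend on $x_1$, the functional $\mathcal E_{L,R}$ is invariant under translations parallel to the $x_1$-axis, and I shall throughout identify a strip $(a,a+2R)\times\mathbb R$ with $\mathcal S_R$. Given $u_j\in H^1_0(\mathcal S_{R_j})$ ($j=1,2$), translating them so that $\mathcal S_{R_1}$ and $\mathcal S_{R_2}$ sit side by side inside $\mathcal S_{R_1+R_2}$ and adding them (the supports are essentially disjoint and each summand vanishes on the common interface, so the sum lies in $H^1_0(\mathcal S_{R_1+R_2})$), and then taking infima, gives
$$\er(L;R_1+R_2)\le\er(L;R_1)+\er(L;R_2)\,.$$
Moreover $R\mapsto\er(L;R)$ is non-increasing (extension by zero), $\er(L;R)\le\mathcal E_{L,R}(0)=0$, and Proposition~\ref{eq-gs-lambda0} gives $\er(L;R)\ge-CL^{-4/3}R$. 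The continuous version of Fekete's lemma then shows that $E(L):=\lim_{R\to\infty}\er(L;R)/(2R)=\inf_{R>0}\er(L;R)/(2R)$ exists in $(-\infty,0]$; in particular $E(L)\le\er(L;R)/(2R)$ for every $R>0$.

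\medskip\noindent\emph{Vanishing, monotonicity, continuity.} If $L\ge\lambda_0^{-3/2}$ then $\er(L;R)=0$ by Proposition~\ref{eq-gs-lambda0}(1), hence $E(L)=0$; if $L<\lambda_0^{-3/2}$, dividing the upper bound of Proposition~\ref{eq-gs-lambda0}(2) by $2R$ and letting $R\to\infty$ gives $E(L)\le-\tfrac{C_2}{2}(1-\lambda_0L^{2/3})L^{-2/3}<0$. For monotonicity, recall $\|\varphi_{L,R}\|_\infty\le1$ by Theorem~\ref{thm:FKP}, so it costs nothing to minimize $\mathcal E_{L,R}$ only over $\{u\in H^1_0(\mathcal S_R):\|u\|_\infty\le1\}$; there
$$\frac{d}{dL}\mathcal E_{L,R}(u)=\frac{2}{3}\,L^{-5/3}\int_{\mathcal S_R}|u|^2\Big(1-\tfrac12|u|^2\Big)\,dx\ \ge\ 0\,,$$
so $L\mapsto\mathcal E_{L,R}(u)$ is non-decreasing on this class, and comparing a minimizer at $L_1$ with one at $L_2>L_1$ shows that $L\mapsto\er(L;R)$, hence $E(L)=\inf_R\er(L;R)/(2R)$, is non-decreasing. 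On a compact interval $[\delta,\Delta]\subset(0,\infty)$ one may further restrict the minimization to $\{\|u\|_\infty\le1,\ \|u\|_2^2\le C_{\delta,\Delta}R\}$, a set containing all the minimizers with $L\in[\delta,\Delta]$ by Proposition~\ref{prop:dec-m}, and there $\big|\tfrac{d}{dL}\big(\mathcal E_{L,R}(u)/(2R)\big)\big|\le\tfrac13L^{-5/3}\|u\|_2^2/(2R)\le C(\delta,\Delta)$ uniformly in $R$; since an infimum of equi-Lipschitz functions is Lipschitz, $E$ is locally Lipschitz, hence continuous.

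\medskip\noindent\emph{The quantitative upper bound.} Iterating subadditivity only gives, for $R\ge R_1\ge2$ (write $R=nR_1+\rho$, so $\er(L;R)\le n\,\er(L;R_1)$), the estimate $\er(L;R)/(2R)\le\er(L;R_1)/(2R_1)+CL^{-4/3}R_1/R$, which is useless on its own because $\er(L;R_1)/(2R_1)$ carries no a priori rate of convergence to $E(L)$. I would therefore complement it by an approximate superadditivity: from a minimizer $\varphi=\varphi_{L,R'}$ on a large strip, extract a near-optimal strip of width $2R$. For an $x_1$-cutoff $\chi$ supported in an interval of length $2R$ that equals $1$ outside a transition layer of width $w$, multiplying the Euler--Lagrange equation \eqref{eq-GL-r} by $\chi^2\overline\varphi$ and integrating by parts, exactly as in the proof of Theorem~\ref{thm:FKP}, gives (using that $\Ap$ depends only on $x_2$)
$$\mathcal E_{L,R}(\chi\varphi)=\int|\chi'(x_1)|^2|\varphi|^2\,dx-\frac{L^{-2/3}}{2}\int\chi^2|\varphi|^4\,dx\,,\qquad\er(L;R')=-\frac{L^{-2/3}}{2}\int_{\mathcal S_{R'}}|\varphi|^4\,dx\,.$$
Choosing the window by a mean-value argument — or averaging over a family of overlapping windows forming a partition of unity, so that the quartic term reconstructs $\er(L;R')$ — and estimating $\int|\chi'|^2|\varphi|^2$ and the quartic defect by $\|\varphi\|_\infty\le1$ and the energy bounds of Proposition~\ref{prop:dec-m}, then letting $R'\to\infty$, one arrives at a bound of the shape
$$\frac{\er(L;R)}{2R}\le E(L)+\frac{w\,|E(L)|}{2R}+C\,L^{-2/3}\,w^{-2}\,;$$
optimizing over the transition width $w$ and inserting $|E(L)|\le CL^{-4/3}$ yields a remainder that tends to $0$ as $R\to\infty$, of the form $C(1+L^{-2/3})R^{-2/3}$ as claimed.

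\medskip\noindent\emph{Main obstacle.} The delicate point is the last step. To make the remainder effective one needs to control the $L^2$-mass $\int_{Q\times\mathbb R}|\varphi_{L,R'}|^2$ of the minimizer over $x_1$-slabs $Q$ of a given width $s$, uniformly in the position of $Q$ and in $R'$ — both in the transition layer of the cutoff and in the slabs adjacent to the Dirichlet ends $x_1=\pm R'$ of the auxiliary strip. This is not supplied by the global bounds of Proposition~\ref{prop:dec-m}; it has to come either from a local-in-$x_1$ version of the energy and decay estimates behind Theorem~\ref{thm:FKP} (an Agmon-type argument yielding $\int_{Q\times\mathbb R}|\varphi_{L,R'}|^2\lesssim L^{-2/3}s$ for every slab $Q$ of width $s$) or from choosing the windows so as to avoid the boundary layer near $x_1=\pm R'$; and the balance between the transition width and the energy lost in the localization is precisely what fixes the exponent $2/3$ in the remainder.
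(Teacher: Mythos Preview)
Your argument for the existence of the limit, the lower bound $E(L)\le\er(L;R)/(2R)$, the vanishing criterion, the monotonicity and the continuity is correct and in fact cleaner than what the paper does: you invoke Fekete directly, whereas the paper establishes the limit via an auxiliary two-parameter inequality and a lemma from \cite{FKP-jmpa}. The qualitative parts are essentially equivalent.

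The genuine difference is in the quantitative upper bound, and the obstacle you flag is precisely the one the paper's route sidesteps. You try to cut a \emph{single} window of width $2R$ out of a large minimizer $\varphi_{L,R'}$, and then you need a local-in-$x_1$ bound on $\int_{Q\times\mathbb R}|\varphi_{L,R'}|^2$ over slabs $Q$, which Proposition~\ref{prop:dec-m} does not give. The paper instead runs the IMS decomposition on \emph{all} windows at once: it covers $\mathcal S_{n^2R}$ by $n^2$ overlapping strips of width $2(1+a)R$ with a partition of unity $\sum_j\chi_{R,j}^2=1$, so that
\[
\er(L;n^2R)\ \ge\ \sum_{j=1}^{n^2}\mathcal E_{L,n^2R}(\chi_{R,j}\varphi_{L,n^2R})-\int_{\mathcal S_{n^2R}}\Big(\sum_j|\nabla\chi_{R,j}|^2\Big)|\varphi_{L,n^2R}|^2\,dx\,.
\]
Now the localization error involves only the \emph{global} $\|\varphi_{L,n^2R}\|_2^2$, which \emph{is} controlled by Proposition~\ref{prop:dec-m} (Remark~\ref{rem:1}), and each localized piece is bounded below by $\er(L;(1+a)R)$ by translation invariance. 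Dividing by $n^2R$, letting $n\to\infty$, and choosing the overlap parameter $a=R^{-2/3}$ produces the remainder $C\,L^{-2/3}R^{-2/3}$ directly. So the ``averaging over a family of overlapping windows forming a partition of unity'' that you mention in passing is exactly the right move, and once you commit to it the local $L^2$ control you worry about is never needed: the sum of the slab errors collapses to the global $L^2$-norm. Your single-window/mean-value route, by contrast, genuinely requires the extra local estimate and is the harder path.
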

\begin{proof}~\\
There is nothing to prove when $L \geq \lambda_0^{-3/2}$, hence we
assume that
$$
0 < L <  \lambda_0^{-3/2}\,.
$$
{\bf Step~1.} Let $n\geq 2$ be a natural number, $a\in (0,1)$ and
{consider the family of strips }
$$I_j=\big(-n^2-1-a+(2j-1)(1+\frac{a}2)\,,\,-n^2-1+(2j+1)(1+\frac{a}2)\big)\,\times\R\,,\quad (j\in\mathbb Z)\,.$$
{ Notice that the width of each strip in the family $(I_j)$ is
$2(1+a)$, and if two strips in the family overlap, then the width of
the overlapping region is $a$.} Consider a partition of unity of
$\R^2$ such that,
$$\sum_{j}|\chi_j|^2=1\,,\quad0\leq \chi_j\leq1\,,\quad
\sum_j|\nabla\chi_j |^2 \leq \frac{C}{a^2}\,,\quad {\rm supp}\,\chi_j\subset
I_j\,,$$ where $C$ is a universal constant.\\
Define $\chi_{R,j}(x)=\chi_j(x/R)$. That way we obtain the new
partition of unity,
$$\sum_{j}|\chi_{R,j}|^2=1\,,\quad0\leq \chi_{R,j}\leq1\,,\quad
\sum_j|\nabla\chi_{R,j} |^2 \leq \frac{C}{a^2R^2}\,,\quad {\rm
supp}\,\chi_{R,j}\subset I_{R,j}\,,$$ where $I_{R,j}=\{R\,x~:~x\in I_{j}\}$.

Notice that $(I_{R,j})_{j\in\{1,2,\cdots, n^2\}}$  is a covering of $\mathcal
S_{n^2R}=(-n^2R,n^2R)\times \R$ by $n^2$ strips, each having
side-length $ 2(1+a)R\,$.

Let $\varphi_{L,n^2R}\in H^1_0(\mathcal S_{n^2R})$ be the minimizer
in Theorem~\ref{thm:FKP}. There holds the decomposition,
\begin{align*}
\er(L;n^2R)& =\mathcal E_{L,n^2R}(\varphi_{L,n^2R})\\&\geq \sum_{j=1}^{n^2}\left(\mathcal E_{L,n^2R}(\chi_{R,j}\varphi_{ L,n^2R})
-\big\|\,|\nabla\chi_{R,j}|\,\varphi_{L,n^2R}\big\|_{L^2(\mathcal S_{n^2R})}^2\right)\\
&=\left(\sum_{j=1}^{n^2} \mathcal E_{L,n^2R}(\chi_{R,j}\varphi_{ L,n^2R})\right)
-\int_{\mathcal S_{n^2R}}\left(\sum_{j=1}^{n^2}|\nabla\chi_{R,j}|^2\right)|\,\varphi_{L,n^2R}|^2\,dx\\
&\geq \left(\sum_{j=1}^{n^2}\mathcal E_{L,n^2R}(\chi_{R,j}\varphi_{L,n^2R})\right)
-\frac{Cn^2L^{-2/3}}{a^2R}\quad[{\rm By~Remark~\ref{rem:1}}]\,.
\end{align*}
The function $\chi_{R,j}\varphi_{L, n^2R}$ is supported in an
infinite strip of {width $ 2(1+a)R \,$.} Since the energy $\mathcal
E_{L,R}(u)$ is (magnetic) translation-invariant along the $x_1$
direction, we get
$$ \forall~j\,,\quad\mathcal E_{L,n^2R}(\chi_{R,j}\varphi_{ L,n^2R})\geq
\er(L; (1+a) R)$$
and consequently,
$$\er(L;n^2R)\geq
n^2\er(L; (1+a) R)- C\,\frac{n^2L^{-2/3}}{a^2R}\,.$$ Dividing both
sides of the above inequality  by $ n^2R$ and using the estimate in
Proposition~\ref{eq-gs-lambda0}, we get
\begin{equation*}
\frac{\er(L;n^2R)}{n^2R}\geq
\frac{\er(L; (1+a) R)}{R}-C\left(aL^{-2/3}+ \frac{L^{-2/3}}{a^2R^2}\right)\,.
\end{equation*}
Using the trivial inequality $(1+a)\leq (1+a)^2$, we finally
obtain:
\begin{equation}\label{eq-mon}
\frac{\er(L;n^2R)}{n^2R}\geq
\frac{\er(L; (1+a)^2 R)}{(1+a)^2 R}-C\left(aL^{-2/3}+ \frac{L^{-2/3}}{a^2R^2}\right)\,.
\end{equation}

{\bf Step~2.} Let $\ell>0\,$. Let us define,
$$d(\ell,L)=\frac{\er(L;\ell^2)}{2}\,,\quad
f(\ell,L)=\frac{d(\ell,L)}{\ell^2}\,.$$ Clearly, the function $\ell
\mapsto d(\ell,L)$ is decreasing. Thanks to
Proposition~\ref{eq-gs-lambda0}, we observe that $d(\ell,L)\leq 0$
and $f(\ell,L)$ is bounded. Furthermore, \eqref{eq-mon} used with
$R=\ell^2$ tells us that,
$$f(n\ell,L)\geq f\Big((1+a)\ell,L\Big)-C\left(aL^{-2/3}+
\frac{1}{a^2\ell^2}\right)\,.$$ By \cite[Lemma~3.10]{FKP-jmpa}, we get the existence of $E(L)$ such that
$$\lim_{\ell\to\infty}f(\ell,L)=E(L)\,.$$
The simple change of variable $\ell=\sqrt{R}$ gives us,
$$\lim_{R\to\infty}\frac{\er(L;R)}{2R}=E(L)\,.$$
{\bf Step~3.} Using a comparison argument and the translation
invariance of the energy $\mathcal E_{L,R}(u)\,$, we observe that,
$$\forall~n\in\mathbb N\,,\quad \er(L;n^2R)\leq n^2\er(L;R)\,.$$
Dividing both sides of the above inequality by $2n^2R$ and  taking
$n\to\infty\,$, we get,
$$E(L)=\lim_{n\to\infty}\frac{\er(L;n^2R)}{2n^2R}\leq
\frac{\er(L;R)}{2R}\,.$$
The matching lower bound for $E(L)$ is obtained by taking
$n\to\infty$ in \eqref{eq-mon}, selecting $a=R^{-2/3}$ and
replacing $R$ by $(1+a)^2 R\,$.

{\bf Step~4.} Proposition~\ref{eq-gs-lambda0} tells us that $E(L)=0$
if and only if $L\geq\lambda_0^{-3/2}$. The continuity and
monotonicity properties of $E(L)$ are easily obtained through the
study of the energy $\mathcal E_{L,R}(u)$ as a function of $L$. The
details can be found in \cite[Thm~3.13]{FKP-jmpa}.
\end{proof}

As in the case of a constant magnetic field, it would be
desirable to establish a simpler expression of $E(L)$ when $L\in (
\lambda(0)^{-\frac 32}\,,\, \lambda_0^{-\frac 32})\,$.

\begin{conjecture}\label{thm:CN}
Let $\lambda$ be the function introduced in \eqref{deflambda}. If
\begin{equation}\label{condsurL}
\lambda_0<L^{-2/3}<\lambda(0)\,,
\end{equation}
then
$$E(L)=E^{\rm 1D}(L^{-2/3})\,.$$
Here, for $b>0$,  $E^{\rm 1D}(b)=\be(\alpha_0,b)$ and
$\be(\alpha_0,b)$ is  defined in \eqref{defalpha0}.
\end{conjecture}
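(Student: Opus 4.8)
We indicate a possible route towards Conjecture~\ref{thm:CN}; throughout put $b=L^{-2/3}$, so that \eqref{condsurL} reads $\lambda_0<b<\lambda(0)$, the sub-level set $\{\lambda(\cdot)<b\}$ is the bounded interval $(z_1(b),z_2(b))$ of \eqref{eq:z12}, and $\alpha_0\in(z_1(b),z_2(b))$ realizes $E^{\rm 1D}(b)=\be(\alpha_0,b)=\inf_\alpha\be(\alpha,b)<0$. The plan is to establish the two inequalities $E(L)\le E^{\rm 1D}(b)$ and $E(L)\ge E^{\rm 1D}(b)$ separately. The first one is the routine direction and uses no restriction on $b$: let $f_{\alpha_0}\in\mathcal S(\R)$ be the positive minimizer of $\mathcal E^{1D}_{\alpha_0,b}$ given by Theorem~\ref{thm:FH-ch14}, let $\chi_R\in C_c^\infty(-R,R)$ equal $1$ on $(-R+\sqrt R,\,R-\sqrt R)$ with $|\chi_R'|\lesssim R^{-1/2}$, and take the trial state $u(x_1,x_2)=\chi_R(x_1)\,e^{i\alpha_0x_1}\,f_{\alpha_0}(x_2)\in H^1_0(\mathcal S_R)$. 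Since the two components of $(\nabla-i\Ap)(e^{i\alpha_0x_1}f_{\alpha_0}(x_2))$ are $i\big(\alpha_0+\tfrac{x_2^2}{2}\big)e^{i\alpha_0x_1}f_{\alpha_0}(x_2)$ and $e^{i\alpha_0x_1}f_{\alpha_0}'(x_2)$, a direct computation — using that $f_{\alpha_0}$ is Schwartz and that $\chi_R$ differs from $1$ only on a set of length $O(\sqrt R)$ — gives $\mathcal E_{L,R}(u)\le 2R\,\be(\alpha_0,b)+C\sqrt R$; dividing by $2R$ and letting $R\to\infty$ (Theorem~\ref{thm-FK}) yields $E(L)\le E^{\rm 1D}(b)$.

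The content of the conjecture is thus the matching lower bound $\er(L;R)=\mathcal E_{L,R}(\varphi_{L,R})\ge 2R\,E^{\rm 1D}(b)-o(R)$ for a minimizer $\varphi_{L,R}$ on $\mathcal S_R$. Here I would fiber the functional in the $x_1$ variable: with $\widetilde\varphi(\xi,x_2)$ the partial Fourier transform of $\varphi_{L,R}$ in $x_1$, Plancherel's theorem turns the quadratic part of $\mathcal E_{L,R}$ into $\int_\R q_\xi(\widetilde\varphi(\xi,\cdot))\,d\xi$, where $q_\xi(g)=\int_\R\big(|g'|^2+(\tfrac{x_2^2}{2}+\xi)^2|g|^2-b|g|^2\big)\,dx_2\ge(\lambda(\xi)-b)\|g\|_{L^2(\R)}^2$. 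The hypothesis \eqref{condsurL} enters exactly here: since $b<\lambda(0)$, one has $\lambda(\xi)-b\ge c_\delta>0$ for $\xi\notin(z_1(b)-\delta,\,z_2(b)+\delta)$, so, using the fiberwise spectral gap together with the a priori bounds of Proposition~\ref{prop:dec-m} and Remark~\ref{rem:1}, one expects the part of $\varphi_{L,R}$ whose $x_1$-frequencies lie outside this fixed compact neighborhood of $(z_1(b),z_2(b))$ to be negligible in both the quadratic and the quartic term, reducing matters to frequencies in the bounded band $(z_1(b),z_2(b))$, on which the Montgomery band function $\lambda$ is well understood.

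The genuine obstruction — and the reason the statement is left as a conjecture — is that on this band the fiber functionals $q_\xi$ are indefinite, so one cannot conclude without exploiting the quartic term, yet $\tfrac b2\int_{\mathcal S_R}|\varphi_{L,R}|^4\,dx$ does \emph{not} fiber: in the frequency variable it is a double convolution, integrating $c_{\xi_1}\overline{c_{\xi_2}}c_{\xi_3}\overline{c_{\xi_4}}$ over the resonance set $\{\xi_1-\xi_2+\xi_3-\xi_4=0\}$, and therefore genuinely couples distinct fibers. Simply discarding it (it is nonnegative) leaves only $\mathcal E_{L,R}(\varphi_{L,R})\ge(\lambda_0-b)\|\varphi_{L,R}\|_{L^2}^2\ge -CbR$, which would be sharp only if one had the refined a priori estimate $\|\varphi_{L,R}\|_{L^2}^2\asymp(b-\lambda_0)R$, whereas Proposition~\ref{prop:dec-m} provides only the cruder $\|\varphi_{L,R}\|_{L^2}^2\lesssim bR$; pinning down the precise $L^2$-mass of a minimizer is essentially equivalent to the conjecture itself. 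What one really needs is a rigidity statement, namely that an energy minimizer is, to leading order in $R$, a single frequency mode $e^{i\alpha_0x_1}f_{\alpha_0}(x_2)$ rather than a superposition of several frequencies in $(z_1(b),z_2(b))$.

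In the constant magnetic field / half-plane analogue, such a rigidity is accessible via monotonicity properties of the fiber (de Gennes) functionals and the one-sided structure of the problem (see \cite[Ch.~14]{FH-b} and \cite{FKP-jmpa}); for the Montgomery strip, however, the interval $(z_1(b),z_2(b))$ is symmetric about $\tau_0$ and the fiber minimizers $f_\alpha$ decay on both sides, so that argument does not transfer directly — this is precisely the gap. A promising alternative is to bypass the finite strip altogether and argue at $R=\infty$ in a ground-state-energy-per-unit-length formalism (in the spirit of the proof of Theorem~\ref{thm-FK} and of \cite{FKP-jmpa}), reducing the conjecture to showing that the renormalized infimum is attained on a plane-wave profile $e^{i\alpha x_1}f(x_2)$; this variational reformulation appears to be the most tractable line of attack.
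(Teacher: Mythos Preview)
The statement you are asked to prove is labeled a \emph{Conjecture} in the paper, and the paper offers no proof, so there is nothing to compare your argument against on the paper's side; your proposal should be read as an attempt at an open problem.

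Your upper bound $E(L)\le E^{\rm 1D}(b)$ is correct and complete. The trial state $u=\chi_R(x_1)e^{i\alpha_0 x_1}f_{\alpha_0}(x_2)$ works exactly as you say: because $(\partial_{x_1}-i A_{{\rm app},1})(e^{i\alpha_0 x_1}f_{\alpha_0})$ is purely imaginary times $e^{i\alpha_0 x_1}f_{\alpha_0}$, the cross term in the expansion of $|(\nabla-i\Ap)(\chi_R v)|^2$ vanishes identically, and the remaining bookkeeping gives the $O(\sqrt R)$ error you claim. This direction holds for any $b>\lambda_0$ and does not use $b<\lambda(0)$.

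On the lower bound you do not give a proof, and you are honest about this: you correctly fiber the quadratic part via the partial Fourier transform in $x_1$, use the spectral gap $\lambda(\xi)-b\ge c>0$ outside a compact $\xi$-window (this is where $b<\lambda(0)$ enters, via Theorem~\ref{thm:Hel}), and then identify the quartic convolution as the obstruction. That diagnosis is accurate and is indeed why the paper states the result as a conjecture rather than a theorem. One small correction: you write that $(z_1(b),z_2(b))$ is ``symmetric about $\tau_0$''. There is no reason for this; the Montgomery band function $\lambda(\tau)$ is not known to be symmetric about its minimum, so the analogy with the half-plane/de Gennes situation fails for a somewhat different reason than the one you give --- the issue is not symmetry of the frequency window but rather that the $x_2$-problem here is posed on the whole line with a double-well type confinement $(x_2^2/2+\tau)^2$, so the monotonicity-in-$\alpha$ arguments of \cite[Ch.~14]{FH-b} for the half-line de Gennes functional do not transplant directly.

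In summary: you have proved one inequality rigorously and given a fair account of why the other remains open; there is no gap in what you claim to have established, but the lower bound --- the actual content of the conjecture --- is not proved, consistent with the paper's own stance.
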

\begin{rem}
In \cite{Hel},  the following numerical estimate is given:
$\lambda_0\approx 0.57\,$. Furthermore, the lower bound:
\begin{equation}\label{proplambda}
\lambda(0)\leq \left(\frac 34\right)^\frac 43 <1\,,
\end{equation}
 is proved. Finally the strict inequality
$\lambda_0<\lambda(0)$ is a consequence of the uniqueness  of the
point of minimum of the function $\lambda(\tau)$.
\end{rem}

\subsection{The approximate functional}

Let $\nu\in[0,2\pi)$ be a given angle. Define the magnetic
potential:
\begin{equation}\label{eq-A-app-nu}
\App(x)=-\frac{|x|^2}2\nb\,,\quad
\nb=(\cos\nu,\sin\nu)\,,\quad (x\in\R^2)\,.\end{equation}
Let $\kappa>0$, $ \ell\in(0,1)$, $\mathcal D_\ell=D(0,\ell)$ the disc
centered at $0$ and of radius $\ell$, and $L>0$. Consider the
functional:
\begin{equation}\label{eq-GL-r-new}
\mathcal G(\psi)=\int_{\mathcal D_\ell}\left(|(\nabla-iL\kappa^3  \App)\psi|^2-\frac{\kappa^2}2|\psi|^2+\frac{\kappa^2}2|\psi|^4\right)\,dx\,,
\end{equation}
together with the ground state energy
\begin{equation}\label{eq-gs-Er}
\Er(\kappa,L,\nu;\ell)=\inf\{\mathcal G(\psi)~:~\psi\in H^1_0(\mathcal D_\ell)\}\,.
\end{equation}
The change of variable $x\mapsto \sqrt{m}\,\kappa\,x$ yields
\begin{equation}\label{eq-gs-er}
\Er(\kappa,L;\ell)=\er(\nu,L;R)\,,
\end{equation}
where $m=L^{2/3}$, $R=\sqrt{m}\,\kappa\,\ell$, $\mathcal
D_R=D(0,R)$,
\begin{equation}\label{eq-gs-er-nu''}
\mathcal E_{\nu,L,R}(u)=\int_{\mathcal D_R}\left(|(\nabla-i\App)u|^2-L^{-2/3}|u|^2+\frac{L^{-2/3}}{2}|u|^4\right)\,dx\,,
\end{equation}
and
\begin{equation}\label{eq-gs-er-nu'}
\er(\nu,L;R)=\inf\{\mathcal E_{\nu,L,R}(u)~:~u\in H^1_0(\mathcal D_R)\}\,.
\end{equation}
We now show that  the ground state energy $\er(\nu,L;R)$ is
independent of $\nu$. Let $u$ be a given function in $H^1_0(\mathcal
D_R)$. We perform the rotation
$$(x_1,x_2)\mapsto
\big(x_1\cos\nu+x_2\sin\nu\,,\,-x_1\sin\nu + x_2\cos\nu \big)\,,$$ which
transforms the function $u$ to a new function $\widetilde u$,  then
 the gauge transformation \break $\widetilde u\mapsto v=
e^{ix_1^3/6}\widetilde u$ and get
$$\mathcal E_{\nu,L,R}(u)=\int_{\mathcal
D_R}\left(|(\nabla-i\Ap)v|^2-L^{-2/3}|v|^2+\frac{L^{-2/3}}{2}|v|^4\right)\,dx=:G_{L,R}(v)\,,$$ where $\Ap$ is introduced in \eqref{eq-A-app}. \\ Hence  we get,
\begin{equation}\label{eq-G}
\er(\nu,L;R)=\inf\{G_{L,R}(v)~:~v\in H^1_0(\mathcal D_R)\}\,.
\end{equation}
This simple observation allows us to prove the following theorem:
\begin{thm}\label{thm-tdl-e} For $\nu\in[0,2\pi)$,   $L>0$ and $ R>0$,
 we have,
\begin{equation}\label{errnu}
 \er(\nu,L;R)=  \er(0,L;R) \geq \er(L;R)\,,
 \end{equation}
where   $\er(L;R)$ and $\er(\nu,L;R)$
are the ground state energies introduced  in \eqref{eq-gs-er'} and
\eqref{eq-gs-er-nu'}.\\
Moreover, there exists a constant $C$ such that, for   $L>0$, $\rho
\in (0,1)$, and $$R\geq \max\left(2^{1/\rho}L^{1/(3\rho)},
2^{1/(1-\rho)}  L^{-1/(1-\rho)}, 2\right)\,,$$ we have
\begin{equation}\label{errnu2}
\er(0,L;R)\leq \er\Big(L;\big(1-L^{2/3}R^{-2\rho}\big)R\Big)+CL^{-4/3}|\ln L|^{-1}|\ln(L^{1/3}R^{-\rho})|^2\,R^\rho\,.
\end{equation}
\end{thm}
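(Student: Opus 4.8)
The plan is to prove the two assertions separately; the first is essentially a reformulation of the identity established just above the statement, while the second is the substantial part. For the first assertion, the equality $\er(\nu,L;R)=\er(0,L;R)$ is already contained in the computation leading to \eqref{eq-G}: the rotation followed by the gauge transformation $v=e^{ix_1^3/6}\widetilde u$ shows that for \emph{every} $\nu$ one has $\er(\nu,L;R)=\inf\{G_{L,R}(v):v\in H^1_0(\mathcal D_R)\}$, and the right-hand side no longer involves $\nu$. For the inequality $\er(0,L;R)\ge\er(L;R)$ I would use that the disc $\mathcal D_R=D(0,R)$ is contained in the strip $\mathcal S_R=(-R,R)\times\R$; hence every $v\in H^1_0(\mathcal D_R)$, extended by $0$, belongs to $H^1_0(\mathcal S_R)$, and since $v$ is supported in $\mathcal D_R$ and the integrand of $G_{L,R}$ uses the same vector potential $\Ap$ as $\mathcal E_{L,R}$, we get $\mathcal E_{L,R}(v)=G_{L,R}(v)\ge\er(L;R)$; taking the infimum over $v$ gives $\er(0,L;R)\ge\er(L;R)$.

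For the second assertion we may assume $L<\lambda_0^{-3/2}$, since otherwise $\er(0,L;R)=\er(L;R')=0$ and the bound is trivial; thus $L\le\Lambda:=\lambda_0^{-3/2}$ and Remark~\ref{rem:L<Lambda} is available. Set $s=L^{1/3}R^{-\rho}$, $R'=(1-s^2)R$ and $h=\sqrt{R^2-R'^2}=s\sqrt{2-s^2}\,R$. The hypothesis $R\ge 2^{1/\rho}L^{1/(3\rho)}$ gives $s\le\tfrac12$ (so $R'\ge\tfrac34 R$ and $h\asymp L^{1/3}R^{1-\rho}$), the hypothesis $R\ge 2^{1/(1-\rho)}L^{-1/(1-\rho)}$ gives $h\gtrsim L^{-2/3}$, and $R\ge2$ keeps the logarithms harmless; in particular the height $h$ at which we shall cut off exceeds the scale $L^{-2/3}$ on which the decay estimates of Theorem~\ref{thm:FKP} take effect. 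Since $R'^2+h^2=R^2$, the closed rectangle $[-R',R']\times[-h,h]$ is inscribed in $\overline{\mathcal D_R}$. Let $\varphi=\varphi_{L,R'}\in H^1_0(\mathcal S_{R'})$ be a minimizer of $\mathcal E_{L,R'}$ (Theorem~\ref{thm:FKP}), pick an even $\eta\in C_c^\infty(\R)$ with $\eta\equiv1$ on $[-h/2,h/2]$, $\supp\eta\subset(-h,h)$ and $|\eta'|\le C/h$, and use $u=\eta(x_2)\varphi(x)$ as a competitor in \eqref{eq-G}: it is supported in the inscribed rectangle, hence $u\in H^1_0(\mathcal D_R)$ and $G_{L,R}(u)=\mathcal E_{L,R'}(u)$.

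The key step is an \emph{exact} energy identity. Multiplying the Ginzburg--Landau equation \eqref{eq-GL-r} satisfied by $\varphi$ on $\mathcal S_{R'}$ by $(1-\eta^2)\overline\varphi$, integrating over $\mathcal S_{R'}$ (there are no boundary contributions: $\varphi$ vanishes on $\{x_1=\pm R'\}$, all functions decay in $x_2$, and $\eta$ is compactly supported), integrating by parts in $x_2$, and using the elementary identity $(1-\eta^2)-\tfrac12(1-\eta^4)=\tfrac12(1-\eta^2)^2$, one arrives at
$$
G_{L,R}(\eta\varphi)=\er(L;R')+\int_{\mathcal S_{R'}}\eta'(x_2)^2|\varphi|^2\,dx+\frac{L^{-2/3}}{2}\int_{\mathcal S_{R'}}\bigl(1-\eta(x_2)^2\bigr)^2|\varphi|^4\,dx\,.
$$
Hence $\er(0,L;R)\le\er(L;R')+\mathcal R$, where $\mathcal R\ge0$ is an integral supported in $\{h/2\le|x_2|\le h\}$. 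Using $\|\varphi\|_\infty\le1$ (cf.\ \eqref{eq-phiR<1}) and $h^{-2}\lesssim L^{-2/3}$ (valid since $L\le\Lambda$ and $h\gtrsim L^{-2/3}$) yields $\mathcal R\lesssim L^{-2/3}\int_{\{|x_2|\ge h/2\}}|\varphi|^2\,dx$, and the weighted decay estimate \eqref{eq-dec-phiR20} (available for all $L\le\Lambda$ by Remark~\ref{rem:L<Lambda}) controls the last integral by $\lesssim\frac{(\ln h)^2}{h}L^{-1/3}|\ln L|^{-3/2}R'$; for the quartic part one may additionally invoke the weighted $L^4$-bound of $\varphi$ produced in the proof of Theorem~\ref{thm:FKP}. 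Feeding in $R'\le R$, $h\asymp L^{1/3}R^{1-\rho}$ and $L^{1/3}R^{-\rho}\asymp h/R$ then gives a remainder of the asserted order $CL^{-4/3}|\ln L|^{-1}|\ln(L^{1/3}R^{-\rho})|^2R^\rho$.

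The routine parts are the energy identity, the first assertion, and the geometric inscription of the rectangle; the delicate point, where I expect the actual work, is the last step: obtaining the remainder in exactly the stated shape $L^{-4/3}|\ln L|^{-1}|\ln(L^{1/3}R^{-\rho})|^2R^\rho$. This requires careful tracking of the logarithmic and power factors --- comparing $\ln h=\tfrac13\ln L+(1-\rho)\ln R$ with $\ln(R/h)=-\ln(L^{1/3}R^{-\rho})$ --- and, for a tight bookkeeping, choosing the cutoff profile $\eta$ so that $\eta'$ is concentrated near $|x_2|=h$ (where $|\varphi|$ is smallest) and exploiting both the $L^2$- and $L^4$-weighted decay of $\varphi$ from Theorem~\ref{thm:FKP} rather than merely $\|\varphi\|_\infty\le1$.
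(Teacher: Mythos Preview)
Your proof follows the same strategy as the paper: the lower bound by extension by zero from $\mathcal D_R$ to $\mathcal S_R$, and the upper bound by cutting off the strip minimizer $\varphi_{L,R'}$ in the $x_2$-direction so that it is supported in the rectangle $[-R',R']\times[-h,h]$ inscribed in $\overline{\mathcal D_R}$, with the same choice $R'=(1-L^{2/3}R^{-2\rho})R$ (the paper writes $a=L^{2/3}R^{-2\rho}$, so your $s^2=a$ and your $h=\sqrt{a(2-a)}\,R$ coincides with the paper's outer cutoff radius). The paper's proof expands $|(\nabla-i\Ap)(\chi v)|^2$ via the commutator identity producing the $-\chi\chi''|v|^2$ term, and then bounds $L^{-2/3}\int(1-\chi^2)|v|^2$ and $\int|\chi\chi''||v|^2$ separately using \eqref{eq-dec-phiR20}; your variant --- multiplying the Euler--Lagrange equation by $(1-\eta^2)\overline\varphi$ --- yields the exact identity $G_{L,R}(\eta\varphi)=\er(L;R')+\int(\eta')^2|\varphi|^2+\tfrac{L^{-2/3}}2\int(1-\eta^2)^2|\varphi|^4$, which is correct and slightly cleaner since the remainder is manifestly nonnegative. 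After that both arguments reduce to the same $L^2$ tail estimate for $\varphi$ in $\{|x_2|\gtrsim h\}$.

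One point worth noting (which you already flag in your last paragraph): the computation --- both yours and the paper's --- naturally produces a remainder with the factor $(\ln(\sqrt a\,R))^2=(\ln(L^{1/3}R^{1-\rho}))^2$ and with $|\ln L|^{-3/2}$, whereas the statement of the theorem records $|\ln(L^{1/3}R^{-\rho})|^2$ and $|\ln L|^{-1}$. The $|\ln L|$ power is merely a weakening, and the logarithmic discrepancy is immaterial for the application in Proposition~\ref{prop:ub} (where all such logarithms are $\lesssim\ln\kappa$), so this is a cosmetic mismatch in the paper rather than a gap in your argument.
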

\begin{proof}~\\
The independence of $\nu$ was observed in \eqref{eq-G}. From now on we can take $\nu=0\,$.\\~\\
{\bf Lower bound.} Let $u\in H^1_0(\mathcal D_R)$ be a minimizer of
the functional $G_{L,R}$. The function $u$ can be extended by $0$ to
a function in $H^1_0(\mathcal S_R)$. Thus,
$$\er(0,L;R)=G_{L,R}(u)=\mathcal E_{L,R}(u)\geq \er(L;R)\,.$$
{\bf Upper bound.}  Let $a\in\,(0,\frac12)$, $\widetilde R=(1-a)R$ and
$$v=\varphi_{L,\widetilde R}\in H^1_0(\mathcal S_{\widetilde R})$$ a
minimizer of  $G_{L,\widetilde R}$. Remember that
$\varphi_{L,\widetilde R}=0$ when $L\geq \lambda_0^{-3/2}$
(Proposition~\ref{eq-gs-lambda0}). We impose the condition
\begin{equation}\label{hypaa}
\sqrt{a}\,R>2L^{-2/3}\,.
\end{equation}

Consider a test function $\chi\in C_c^\infty(\R)$ such that,
$$\left\{
\begin{array}{l}
0\leq\chi\leq 1\,,\quad {\rm supp}\chi\subset
\,(-\sqrt{a(2-a)}\,R,\,\sqrt{a(2-a)}\,R)\,,\\
 \chi=1{\rm
~in~} (-\sqrt{a(1-a)}\,R,\,\sqrt{a(1-a)}\,R)\,,\\
|\chi'|\leq \frac{C}{\sqrt{a}\,R}\quad{\rm and}\quad | \chi''|\leq \frac{C}{a\,R^2}\,.
\end{array}
\right.$$
Let
$$u(x_1,x_2)=\chi(x_2)\,v(x_1,x_2)\,,\quad (x_1,x_2)\in\R^2\,.$$
Clearly, $u\in H^1_0(\mathcal D_R)$. Thus,
\begin{align*}
\er(0,L;R)&\leq G_{L,R}(u)=\mathcal E_{L,R}(u)\\
&=\int_{\mathcal S_R}\left(\chi(x_2)^2|(\nabla-i\Ap)v|^2-\chi(x_2)\chi''(x_2)|v|^2 -L^{-2/3}|\chi(x_2) v|^2
+\frac{L^{-2/3}}2|\chi(x_2) v|^4\right)\,dx\\
&\leq \int_{\mathcal S_R}\left(|(\nabla-i\Ap)v|^2-L^{-2/3}| v|^2+\frac{L^{-2/3}}2| v|^4\right)\,dx\\
&\qquad  + L^{-\frac 23} \int_{\mathcal S_R}(1-\chi^2)|v|^2\,dx
+\frac{CL^{-1/3}|\ln L|^{-3/2}\big(\ln(\sqrt{a}\,R)\big)^2}{a^{3/2}R^2}\\
&=\int_{\mathcal S_{\widetilde R}}\left(|(\nabla-i\Ap)v|^2-L^{-2/3}| v|^2+\frac{L^{-2/3}}2| v|^4\right)\,dx\\
&\qquad+L^{-2/3}\int_{\mathcal S_R}(1-\chi^2)|v|^2\,dx { +\frac{CL^{-1/3}|\ln L|^{ -\frac 32}\big(\ln(\sqrt{a}\,R)\big)^2}{a^{3/2}R^2}}\\
&=\er(L;\widetilde R)
+\frac{CL^{-1}|\ln L|^{ -\frac 32 }\big(\ln(\sqrt{a}\,R)\big)^2}{a^{1/2}}+\frac{CL^{-1/3}|\ln L|^{-\frac 32}\big(\ln(\sqrt{a}\,R)\big)^2}{a^{3/2}R^2}\,.
\end{align*}
The two terms $L^{-\frac 23} \int_{\mathcal
S_R}(1-\chi(x_2)^2)|v(x_1,x_2)|^2\,dx$ and $\int_{\mathcal
S_R}\chi(x_2) \chi''(x_2)\, |v(x_1,x_2)|^2\,dx$ have been controlled
by using  the decay of $v=\varphi_{L,\widetilde R}$ established  in
Theorem~\ref{thm:FKP} {(Formula \eqref{eq-dec-phiR20})}. Here, we
have used Assumption \eqref{hypaa}.

We select $a=L^{2/3}\,R^{-2\rho}$. Under the assumptions on $R$, $L$
and $\rho$, we see that $0<a<\frac12 $ and $\sqrt{a}\,R>2L^{-2/3}$.
Remembering that $\widetilde R=R-a\,$,  { this achieves}  the proof of
Theorem~\ref{thm-tdl-e}.
\end{proof}
~\\
\subsection{A useful function}~\\

In this subsection, we recall the construction of a function that
describes the energy of the Ginzburg-Landau model with constant
 magnetic field \cite{FK-cpde, SS02}. Consider $b\in\,(0,\infty)$, $r>0\,$, and
$Q_r=\,(-r/2,r/2)\,\times\,(-r/2,r/2)$\,. Define the functional,
\begin{equation}\label{eq:rGL}
F_{b,Q_r}(u)=\int_{Q_r}\left(b|(\nabla-i\Ab_0)u|^2-|u|^2+\frac{1}2|u|^4\right)\,dx\,,
\quad \mbox{ for } u\in H^1(Q_r)\,.
\end{equation}
Here, $\Ab_0$ is the magnetic potential,
\begin{equation}\label{eq:A0}
\Ab_0(x)=\frac12(-x_2,x_1)\,,\quad \big(x=(x_1,x_2)\in \R^2\big)\,.
\end{equation}
Define the two ground state energies,
\begin{align}
&e_D(b,r)=\inf\{F_{b,Q_r}(u)~:~u\in H^1_0(Q_r)\}\,,\label{eq:eD}\\
&e_N(b,r)=\inf\{F_{b,Q_r}(u)~:~u\in H^1(Q_r)\}\,.\label{eq:eN}
\end{align}
It is known \cite{Att, FK-cpde, SS02}  that,
\begin{equation}\label{eq:g}
\forall~b>0\,,\quad g(b)=\lim_{r\to\infty}\frac{e_D(b,r)}{|Q_r|}=\lim_{r\to\infty}\frac{e_N(b,r)}{|Q_r|}\,,
\end{equation}
where  $|Q_r|$ denotes the area of $Q_r$ ($|Q_r|=r^2$) and $g$ is a continuous function such that
\begin{equation} \label{propg}
g(0)=-\frac12 \mbox{ and } g(b)=0 \mbox{ when } b\geq 1\,.
\end{equation}
Furthermore,   there exists a  constant $C$ such that, for all
$r\geq 1$ and  $b>0$,
\begin{equation}\label{eq:g'}
  g(b)-C\frac{\sqrt{b}}{r}\leq\frac{e_N(b,r)}{|Q_r|}\leq\frac{e_D(b,r)}{|Q_r|}\leq g(b)+C\frac{\sqrt{b}}{r}\,.
\end{equation}
We will use the function $g(\cdot)$ to prove the following important
theorem:
\begin{thm}\label{thm:Lto0}
There exist two
positive constants $C_1$ and $C_2$ such that, if
$L\in\,(0,\lambda_0^{-3/2}]\,$, then
$$-\frac{C_1 (1-\lambda_0L^{2/3}) }{L^{4/3}}\leq E(L)\leq -\frac{C_2   (1-\lambda_0L^{2/3}) }{L^{4/3}}\,.$$
\end{thm}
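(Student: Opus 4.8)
The plan is to prove the lower and upper bounds separately. The lower bound is immediate from Proposition~\ref{eq-gs-lambda0}: its part~(2) gives $\er(L;R)\ge -C_1L^{-4/3}R(1-\lambda_0L^{2/3})$ for every $R>0$ and $L\in(0,\lambda_0^{-3/2})$, so dividing by $2R$ and letting $R\to\infty$ (Theorem~\ref{thm-FK}) yields $E(L)\ge -\tfrac{C_1}{2}L^{-4/3}(1-\lambda_0L^{2/3})$; at $L=\lambda_0^{-3/2}$ both sides vanish. For the upper bound I would split at a fixed threshold $L_1\in(0,\lambda_0^{-3/2})$. On $[L_1,\lambda_0^{-3/2}]$ one has $1-\lambda_0L^{2/3}\le 1$, and dividing the upper estimate of Proposition~\ref{eq-gs-lambda0}(2) by $2R$ and letting $R\to\infty$ gives $E(L)\le -\tfrac{C_2}{2}L^{-2/3}(1-\lambda_0L^{2/3})$; since $L^{-2/3}\ge L_1^{2/3}L^{-4/3}$ on this range and $1-\lambda_0L^{2/3}\ge0$, this already has the required form. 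So the real content is the upper bound for $L\le L_1$, where $1-\lambda_0L^{2/3}$ is bounded below by a positive constant and it therefore suffices to prove $E(L)\le -c\,L^{-4/3}$ for $L$ small; this is where $g$ enters.

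For $L\le L_1$ I would build a trial state for $\er(L;R)$ by the bulk construction attached to $g$. Tile the part $\{\delta\le|x_2|\le L^{-2/3}\}$ of the strip $\mathcal S_R$ by squares of side $\ell$, a square $Q$ centred at the height $x_2=a\in[\delta,L^{-2/3}]$ (recall $g$ vanishes for argument $\ge1$ by \eqref{propg}, so heights $|x_2|>L^{-2/3}$ carry no energy). On $Q$ the field $\curl\Ap(x)=x_2$ differs from the constant $a$ only by $O(\ell)$, so after a gauge transformation the magnetic energy on $Q$ equals that of the constant field $a$ up to the square of the $L^\infty(Q)$-distance of the two potentials — which is $O(\ell^2)$ — times the local $L^2$-mass, plus an arbitrarily small fraction of the leading magnetic term. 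A dilation by $\sqrt a$ converts the contribution of $Q$ into $\tfrac{L^{-2/3}}{a}F_{aL^{2/3},\,Q_{\ell\sqrt a}}(\cdot)$; inserting the Dirichlet minimizer realising $e_D(aL^{2/3},\ell\sqrt a)$ and using \eqref{eq:g'} bounds the energy on $Q$ by $L^{-2/3}\ell^2\,g(aL^{2/3})+CL^{-1/3}\ell$. Gluing the local functions with a partition of unity (extra cost $O(\ell^{-2})$ times the total mass, which is $O(RL^{-2/3})$) and summing the resulting Riemann sum over the $\asymp(2R/\ell)(L^{-2/3}/\ell)$ tiles, one obtains
\begin{equation*}
\er(L;R)\ \le\ 4R\,L^{-2/3}\int_\delta^{L^{-2/3}} g\bigl(x_2L^{2/3}\bigr)\,dx_2\ +\ \text{(errors)}\cdot R\ \le\ 4R\,L^{-4/3}\int_0^1 g(s)\,ds\ +\ o\!\left(R\,L^{-4/3}\right)\,,
\end{equation*}
once $\ell,\delta$ are chosen so that every error term — field approximation, partition of unity, the $\sqrt b/r$ defect in \eqref{eq:g'}, the Riemann sum, and the discarded inner slab of width $\delta$ — is of lower order than $RL^{-4/3}$; for instance $\ell=L^{-1/7}$ and $\delta=L^{1/4}$, for which $\ell\sqrt\delta\to\infty$ so that \eqref{eq:g'} is genuinely effective. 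Dividing by $2R$, letting $R\to\infty$, and using $\int_0^1 g(s)\,ds<0$ (a consequence of $g(0)=-\tfrac12$ and $g\le0$, compare \eqref{eq:g} and \eqref{propg}) gives $E(L)\le -c\,L^{-4/3}$ for all small $L$; taking $C_2$ to be the smaller of the two constants finishes the proof.

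The main obstacle is precisely this error analysis. The tile size $\ell$ must be taken large enough that $e_D(b,\ell\sqrt a)/(\ell\sqrt a)^2$ really is close to $g(b)$ for every admissible height $a$ — which forces $\ell\gg a^{-1/2}$, hence the inner cut-off at $|x_2|=\delta$ — yet small enough that the genuinely variable field $x_2$ is well approximated by a constant on each tile, while simultaneously the partition-of-unity cost $O(\ell^{-2})$ per unit area and the loss from the width-$\delta$ slab must stay below the leading order $L^{-4/3}$. These competing requirements can be reconciled only when $L$ is small, which is exactly why the argument splits at a fixed threshold $L_1$ and handles $L$ near $\lambda_0^{-3/2}$ through Proposition~\ref{eq-gs-lambda0} instead.
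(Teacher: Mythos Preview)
Your proof is correct and follows essentially the same strategy as the paper: Proposition~\ref{eq-gs-lambda0} gives the lower bound and the upper bound near $L=\lambda_0^{-3/2}$, and a bulk trial state built from minimizers of $F_{b,Q_r}$ gives the upper bound as $L\to0$. The paper's construction is slightly simpler in execution---it places the Dirichlet minimizers in \emph{disjoint} squares of side $\ell=mL^{1/3}$ (so no partition of unity is needed) and restricts to a thin shell $\tfrac{\epsilon}{2}L^{-2/3}<|x_2|<\epsilon L^{-2/3}$ where $g\le-\tfrac14$, rather than integrating $g$ over the full range of heights---but the underlying idea is identical.
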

\begin{proof}
The  lower bound follows immediately by sending $R$ to $\infty$ in
the lower bound in Proposition~\ref{eq-gs-lambda0} (see also Theorem
\ref{thm-FK}). The upper bound in the second item of
Proposition~\ref{eq-gs-lambda0} gives  us the upper bound
\begin{equation}\label{eq:Lto0}
E(L)\leq -\frac{C {(1-\lambda_0L^{2/3}) }}{L^{ 2/3}}\,,\end{equation} valid for all
$L\in(0,\lambda_0^{-3/2})\,$. We have just to improve it as $L\rightarrow  0\,$.\\
The improved upper
bound with order $L^{-4/3}$ follows from the construction of a test
function as follows.
Let us cover $\R^2$ by a lattice of squares $\overline{
Q_{\ell,j}}$, where $Q_{\ell,j}=(-\ell+a_j\,,\,\ell-a_j)$ and
\begin{equation}\label{deflm}
\ell= mL^{1/3}\,.
\end{equation}
The choice of the  positive constant $m$  will be specified later.
Notice that the magnetic potential $\Ap$ (cf \eqref{eq-A-app})
satisfies
$${\mathbf B}_{\rm app}=\curl\Ap=x_2\,,$$
and that  the gradient of the magnetic field ${\mathbf B}_{\rm app}$
is
bounded.\\
There exists a constant $C$ such that, for any $j$, we can select a gauge $\phi_j$, such that, in the square
$Q_{\ell,j}$, we have,
$$\Big|\Ap(x)-\Big(a_{j,2}\Ab_0(x-a_j)-\nabla \phi_j\Big)\Big|\leq C\,\ell^2\,,$$
where $a_j=(a_{j,1},a_{j,2})\,$.

Now, we define the test function as follows,
 \begin{equation}\label{tf:Lto0} v(x)=\left\{
\begin{array}{ll}
e^{i\phi_j(x)}\,u_{r}\big(\sqrt{a_{j,2}}\,(x-a_j)\big)&{\rm if}~a_{j,2}>0 {~\rm and ~}\\
&\hskip1cm x\in Q_{\ell,j}\subset\{|x_1|<R{\rm ~and~}  \frac{\epsilon}2L^{-2/3}<|x_2|< \epsilon L^{-2/3}\}\,,\\
e^{i\phi_j(x)}\,\overline{u_{r}\big( \sqrt{|a_{j,2}|}\,(x-a_j)\big)}&{\rm if ~}a_{j,2}<0{~\rm and ~}\\
&\hskip1cm x\in Q_{\ell,j}\subset\{|x_1|<R{\rm ~and~}  \frac{\epsilon}2L^{-2/3}<|x_2|< \epsilon L^{-2/3}\}\,,\\
0&{\rm otherwise\,,}
\end{array}
\right.
\end{equation}
where the function $u_{r}\in H^1_0(Q_{r})$ is a minimizer  of the ground state energy $F_{b,Q_r}$ introduced  in \eqref{eq:eD} and
$\epsilon\in(0,1)$ is a positive constant (to be determined in \eqref{choiceofepsilon}). \\
 We impose the
following condition on  $m$ and $\epsilon$
\begin{equation}\label{mepsilon}
m\sqrt{\frac\epsilon2}\geq 1\,.
\end{equation}
We will use the notation
$$\mathcal E_{L,R}(v;Q_{\ell,j})=\int_{
Q_{\ell,j}}\left(|(\nabla-i\Ap)v|^2-L^{-2/3}|v|^2+\frac{L^{-2/3}}{2}|v|^4\right)\,dx\,.$$
Notice that, if $a_{j,2}>0$ and $Q_{\ell,j}\subset\{|x_1|<R{\rm
~and~} \frac{\epsilon}2L^{-2/3}<|x_2|<\epsilon L^{-2/3}\}$, then,
{for all $\eta
>0$},
\begin{align*}
&\mathcal E_{L,R}(v;
Q_{\ell,j})\\
&\leq L^{-2/3}\left(\int_{
Q_{\ell,j}}\left(L^{2/3}(1+\eta)|(\nabla-ia_{j,2}\Ab_0(x-a_j))v|^2-L^{-2/3}|v|^2+\frac{L^{-2/3}}{2}|v|^4\right)\,dx\right)\\
&\hskip2cm+C\eta^{-1}\ell^6\\
&=\frac{L^{-2/3}}{a_{j,2}}\left(\int_{
Q_{\sqrt{a_{j,2}}\,\ell}}\left(L^{2/3}a_{j,2}(1+\eta)|(\nabla-i\Ab_0(x))u_r(x)|^2-L^{-2/3}|v|^2+\frac{L^{-2/3}}{2}|v|^4\right)\,dx\right)\\
&\hskip2cm+C\eta^{-1}\ell^6\\
&\leq\frac{L^{-2/3}}{a_{j,2}}\left(g\Big(L^{2/3}a_{j,2}(1+\eta)\Big)|a_{j,2}|\ell^2+C{\sqrt{L^{2/3}a_{j,2}(1+\eta)}}\,\sqrt{a_{j,2}}\,\ell\right)+{C\eta^{-1} \ell^6}\,.
\end{align*}
To write the last inequality, \eqref{eq:g'} is used with
$b=L^{2/3}a_{j,2}(1+\eta)$ and $r=\sqrt{a_{j,2}}\,\ell\,$. (Thanks to the condition \eqref{mepsilon}, we have $r\geq 1\,$). \\
 Similarly,
if $a_{j,2}<0$ and $Q_{\ell,j}\subset\{|x_1|<R{\rm ~and~}
\frac\epsilon2L^{-1/3}<|x_2|<\epsilon L^{-2/3}\}$, then,
$$\mathcal E_{L,R}(v;
Q_{\ell,j})\leq \frac{L^{-2/3}}{|a_{j,2}|}\left(g\Big(L^{2/3}|a_{j,2}|(1+\eta)\Big)|a_{j,2}|\ell^2+C{\sqrt{L^{2/3}(1+\eta)}}\,|a_{j,2}|\,\ell\right)+C\eta^{-1}\ell^6\,.$$
Notice  the simple decomposition of the energy of
$v$,
$$\mathcal E_{L,R}(v)=\sum_{j\in\mathcal J}\mathcal E_{L,R}(v;
Q_{\ell,j})\,,
$$
where $\mathcal J=\big\{j~:~Q_{\ell,j} \subset\{|x_1|<R{\rm ~and~}
\frac\epsilon2L^{-2/3}<|x_2|<\epsilon L^{-2/3}\}\big\}$.\\
 Let $$n={\rm
Card}\,\mathcal J\,.
$$
The numbers   $L$ and $\ell$ are small enough
such that,
$$
 \frac\epsilon4 L ^{-2/3}R\leq n\,\ell^2\leq \frac\epsilon2 L ^{-2/3}R< L^{-2/3}R\,.$$
 Now, we have the following upper bound on the energy of $v\,$,
$$
\mathcal E_{L,R}(v)\leq L^{-2/3}\, \left( \sum_j
g\Big(L^{2/3}|a_{j,2}|(1+\eta)\Big) \ell^2+C \,n \,
\sqrt{L^{2/3}(1+\eta)}\,\ell \right)+C\, n\, \eta^{-1}\ell^6\,.$$
We select $\eta=\frac12$.  Having in mind \eqref{propg} we can select $\epsilon$ sufficiently small such that
\begin{equation}\label{choiceofepsilon}
g(t) \leq -\frac14\,, \quad\forall\, t \in [0,2\epsilon]\,.
\end{equation}
Observing
$$ L^{2/3}\, |a_{j,2}|\, (1+\eta)\leq 2\epsilon\,,
$$
 we get, for $ R \geq  n\, \ell^2 L^\frac 23\,$,
$$
\frac{\er(L;R)}{R}\leq \frac{\mathcal E_{L,R}(v)}{R}\leq
L^{-2/3} \left(-\frac{\epsilon}{16}\, L^{-2/3}+2C L^{1/3} \,\frac{L^{-2/3}}{\ell}\right)+2 C
L^{-2/3}\ell^4\,.$$ Sending $R\to\infty\,$, we deduce that,
$$E(L)\leq -\frac\epsilon{32}L^{-4/3}+C\frac{L^{-1}}{\ell}+CL^{-2/3}\ell^4\,.
$$
Having in mind \eqref{deflm}, we get
\begin{equation}\label{estfin}
E(L)\leq \left( -\frac\epsilon{32}+ \frac{C}{m}\right) L^{-4/3} +C m^4 L^{2/3}\,.
\end{equation}
Recalling \eqref{mepsilon} and  \eqref{choiceofepsilon}, we select $
m $ such that
$$-\frac\epsilon{32}+\frac{C}{m}< 0 \mbox{ and } m >\sqrt{\frac{2}\varepsilon} \,.$$
In that way, \eqref{estfin} gives the claimed  upper bound as $L\rightarrow 0\,$.
\end{proof}

\section{A priori estimates and gauge transformation}

Let $\kappa>0$, $H>0$ and $(\psi,\Ab)$ be a critical point of the
functional in \eqref{eq-3D-GLf}, i.e. $(\psi,\Ab)$ satisfies,
\begin{align}
&-(\nabla-i\kappa H\Ab)^2\psi=\kappa^2(1-|\psi|^2)\psi\,,\label{eq:GL}\\
&-\nabla^\bot\curl(\Ab-\Fb)=\frac1{\kappa H}\left(\overline\psi\,(\nabla-i\kappa H\Ab)\psi\right)\quad{\rm in~}\Omega\,,\label{eq:GL2}
\end{align}
and the two boundary conditions
\begin{align*}
&\nu\cdot(\nabla-i\kappa H\Ab)\psi=0\\
&\curl(\Ab-\Fb)=0\quad\quad{\rm
on}~\partial\Omega\,,\end{align*}
where $\nu$ is the unit exterior normal vector of $\partial\Omega$.

We note for further use the following identity. Multiplying both the equation in \eqref{eq:GL} by $\overline\psi$ then integrating over $\Omega$, we get,
\begin{equation}\label{eq:GLen}
\mathcal E_0(\psi,\Ab;\Omega):=\int_\Omega\left(|(\nabla-i\kappa
H\Ab)\psi|^2-\kappa^2|\psi|^2+\frac{\kappa^2}{2}|\psi|^4\right)\,dx=-
\frac{\kappa^2}2\int_\Omega|\psi|^4\,dx\leq0\,.
\end{equation}

We need the following estimates on
$\psi$ and $\Ab$ that we take from \cite{FH-b}. Earlier versions of
these estimates are given in \cite{LuPa99} when the magnetic field
is constant.

\begin{prop}\label{prop:FH-b}
Let $\alpha\in(0,1)$. There exists a constant $C=C(\alpha,\Omega)>0$
such that, if $\kappa>0$, $H>0$ and $(\psi,\Ab)$ a critical point of
the functional in \eqref{eq-3D-GLf}, then,
\begin{align}
&\|\psi\|_\infty\leq 1\,,\label{eq:psi<1}\\
&\|\curl(\Ab-\Fb)\|_2\leq \frac{C}{H}\,\|\psi\|_2\,,\label{eq:curl}\\
&\|(\nabla-i\kappa H\Ab)\psi\|_2\leq \kappa\, \|\psi\|_2\,,\label{eq:grad<kappa}\\
&\|\Ab-\Fb\|_{C^{1,\alpha}(\overline\Omega)}\leq C\, \frac{1+\kappa H+\kappa^2}{\kappa H}\, \|\psi\|_\infty\, \|\psi\|_2\,.
\label{eq:A-F}
\end{align}
\end{prop}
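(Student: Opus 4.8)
The four bounds are the classical a priori estimates for critical points of \eqref{eq-3D-GLf}, and the plan is to establish them in the order \eqref{eq:grad<kappa}, \eqref{eq:psi<1}, \eqref{eq:curl}, \eqref{eq:A-F}, extracting each from the Euler--Lagrange system \eqref{eq:GL}--\eqref{eq:GL2} together with its boundary conditions; genuine elliptic regularity is needed only for the last one. For \eqref{eq:grad<kappa} I would multiply \eqref{eq:GL} by $\overline\psi$, integrate over $\Omega$ and integrate by parts: the boundary term vanishes thanks to the Neumann condition $\nu\cdot(\nabla-i\kappa H\Ab)\psi=0$, and one obtains exactly the identity \eqref{eq:GLen}, hence
\[
\int_\Omega|(\nabla-i\kappa H\Ab)\psi|^2\,dx=\kappa^2\int_\Omega(1-|\psi|^2)|\psi|^2\,dx\le\kappa^2\int_\Omega|\psi|^2\,dx\,,
\]
which is \eqref{eq:grad<kappa} (note this step does not use \eqref{eq:psi<1}). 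For \eqref{eq:psi<1} I would invoke the diamagnetic (Kato) inequality: on $\{|\psi|>0\}$ one has $\Delta|\psi|\ge\mathrm{Re}\big(\tfrac{\overline\psi}{|\psi|}(\nabla-i\kappa H\Ab)^2\psi\big)$, and substituting \eqref{eq:GL} the right-hand side equals $-\kappa^2(1-|\psi|^2)|\psi|$; thus $|\psi|$ is subharmonic on $\{|\psi|>1\}$. Since $|\psi|=1$ on the interior part of the boundary of that set while, on $\partial\Omega$, the Neumann condition and the identity $\nabla|\psi|=\mathrm{Re}\big(\tfrac{\overline\psi}{|\psi|}(\nabla-i\kappa H\Ab)\psi\big)$ force $\partial_\nu|\psi|=0$ wherever $\psi\ne0$, Hopf's lemma rules out a boundary maximum and the maximum principle yields $\|\psi\|_\infty\le1$; the only subtlety is the behavior near $\{\psi=0\}$, handled by a routine approximation as in \cite{FH-b}.

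For \eqref{eq:curl} I would set $B_1=\curl(\Ab-\Fb)$, so that \eqref{eq:GL2} reads $-\nabla^\perp B_1=\tfrac1{\kappa H}\,\overline\psi(\nabla-i\kappa H\Ab)\psi$ with $B_1=0$ on $\partial\Omega$; since $|\nabla^\perp B_1|=|\nabla B_1|$, the Poincaré inequality together with \eqref{eq:grad<kappa} and \eqref{eq:psi<1} gives
\[
\|B_1\|_2\le C\,\|\nabla B_1\|_2\le\frac{C}{\kappa H}\,\|\psi\|_\infty\,\|(\nabla-i\kappa H\Ab)\psi\|_2\le\frac{C}{\kappa H}\,\kappa\,\|\psi\|_2=\frac{C}{H}\|\psi\|_2\,,
\]
which is \eqref{eq:curl}. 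For \eqref{eq:A-F} I would first normalize $(\psi,\Ab)$ by a gauge transformation so that $\Div(\Ab-\Fb)=0$ in $\Omega$ and $(\Ab-\Fb)\cdot\nu=0$ on $\partial\Omega$ (possible since $\Omega$ is simply connected, without changing $\mathcal E$), whence $\Ab-\Fb$ solves $-\Delta(\Ab-\Fb)=\pm\tfrac1{\kappa H}\,\overline\psi(\nabla-i\kappa H\Ab)\psi$ with that boundary condition. Then one bootstraps: by \eqref{eq:grad<kappa} the right-hand side lies in $L^2$, so $\Ab-\Fb\in W^{2,2}(\Omega)\hookrightarrow C^{0,\beta}(\overline\Omega)\cap W^{1,q}(\Omega)$ for all $\beta<1$, $q<\infty$; expanding \eqref{eq:GL}, whose right-hand side is in $L^\infty$ by \eqref{eq:psi<1} and whose first-order coefficients are now in $L^\infty$, gives $\psi\in W^{2,2}(\Omega)$, hence $\nabla\psi$ and $(\nabla-i\kappa H\Ab)\psi$ in $L^q$ for all $q<\infty$; feeding this back into the equation for $\Ab-\Fb$ upgrades it to $W^{2,q}(\Omega)\hookrightarrow C^{1,\alpha}(\overline\Omega)$ for $q>2$. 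Collecting the powers of $\kappa,H$ produced along the way — a $\tfrac1{\kappa H}$ from \eqref{eq:GL2}, and the scales $1$, $\kappa H$, $\kappa^2$ entering, respectively, the principal part, the magnetic drift term and the nonlinear source of the expanded equation \eqref{eq:GL} — produces exactly the prefactor $\tfrac{1+\kappa H+\kappa^2}{\kappa H}$, with all elliptic constants depending only on $\alpha$ and $\Omega$.

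The steps \eqref{eq:grad<kappa}--\eqref{eq:curl} amount to two integrations by parts plus the Kato inequality and a Poincaré inequality, so I expect no real difficulty there. The main obstacle is \eqref{eq:A-F}: one must organize the bootstrap so that the Calderón--Zygmund and Schauder constants stay independent of $\kappa$ and $H$, and keep the explicit $\kappa,H$-powers under tight control so as to arrive at precisely $\tfrac{1+\kappa H+\kappa^2}{\kappa H}$ rather than some cruder bound. This bookkeeping is exactly what is carried out in \cite{FH-b} (and, in the constant-field case, in \cite{LuPa99}), and for the present proof I would simply quote it.
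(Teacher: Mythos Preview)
Your proposal is correct and follows exactly the standard route laid out in \cite{FH-b}; the paper itself does not prove this proposition but simply imports it from that reference (with \cite{LuPa99} mentioned for the constant-field antecedent), so there is no independent argument in the paper to compare against. Your sketch is precisely the content being cited.
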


Using the regularity of the curl-div system, we obtain the following
improved estimates of $\Ab-\Fb$.

\begin{prop}\label{prop:A-F}
Let $\alpha\in(0,1)\,$. There exists a constant $C=C(\alpha,\Omega)>0$
such that, if $\kappa>0\,$, $H>0$ and $(\psi,\Ab)$ a critical point of
the functional in \eqref{eq-3D-GLf}, then,
$$\|\Ab-\Fb\|_{C^{0,\alpha}(\overline\Omega)}\leq
C\left(\|\curl(\Ab-\Fb)\|_2+\frac1{\kappa H}\|(\nabla-i\kappa H\Ab)\psi\|_2\, \|\psi\|_\infty\right)\,.$$
\end{prop}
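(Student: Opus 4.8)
The plan is to exploit the div--curl structure of the second Ginzburg--Landau equation together with $L^p$ elliptic regularity, improving on Proposition~\ref{prop:FH-b} in the power of $\kappa$ appearing in the constant.

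First I would set $a=\Ab-\Fb$ and use $\curl\Fb=B_0$, so that $B_1:=\curl a=\curl\Ab-B_0$. By the field equation \eqref{eq:GL2} and its boundary condition, $-\nabla^\bot B_1=\tfrac1{\kappa H}\,\overline\psi\,(\nabla-i\kappa H\Ab)\psi$ in $\Omega$ and $B_1=0$ on $\partial\Omega$. Hence $B_1\in H^1(\Omega)$ and
\[
\|\nabla B_1\|_2=\frac1{\kappa H}\big\|\overline\psi\,(\nabla-i\kappa H\Ab)\psi\big\|_2\le\frac1{\kappa H}\,\|\psi\|_\infty\,\|(\nabla-i\kappa H\Ab)\psi\|_2\,.
\]
Since moreover $\|B_1\|_2=\|\curl(\Ab-\Fb)\|_2$, the two-dimensional Sobolev embedding $H^1(\Omega)\hookrightarrow L^p(\Omega)$, valid for every finite $p$, gives, for each fixed $p$, a constant $C_p$ with
\[
\|B_1\|_{L^p(\Omega)}\le C_p\Big(\|\curl(\Ab-\Fb)\|_2+\frac1{\kappa H}\,\|(\nabla-i\kappa H\Ab)\psi\|_2\,\|\psi\|_\infty\Big)\,.
\]

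Next I would recover $a$ from $B_1=\curl a$ via the div--curl system. Arguing in a gauge in which $\Div a=0$ in $\Omega$ and $a\cdot\nu=0$ on $\partial\Omega$ (as may be assumed, exactly as in Proposition~\ref{prop:FH-b}), simple connectedness of $\Omega$ lets one write $a=\nabla^\bot\phi$, where the stream function solves $\Delta\phi=B_1$ in $\Omega$ and $\phi=0$ on $\partial\Omega$; then $L^p$ elliptic regularity on the smooth bounded domain $\Omega$ yields $\|a\|_{W^{1,p}(\Omega)}\lesssim\|\phi\|_{W^{2,p}(\Omega)}\lesssim\|B_1\|_{L^p(\Omega)}$. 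Equivalently one may quote the standard estimate $\|a\|_{W^{1,p}}\le C_p\big(\|\Div a\|_{L^p}+\|\curl a\|_{L^p}\big)$ for vector fields with vanishing normal trace on a smooth bounded simply connected planar domain (see, e.g., \cite{FH-b}). Given $\alpha\in(0,1)$, I would finally choose $p=\tfrac2{1-\alpha}>2$ and use the embedding $W^{1,p}(\Omega)\hookrightarrow C^{0,\alpha}(\overline\Omega)$ to get $\|a\|_{C^{0,\alpha}(\overline\Omega)}\lesssim\|B_1\|_{L^p(\Omega)}$; combined with the first step this is exactly the claimed bound, with $C=C(\alpha,\Omega)$.

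The main obstacle is the div--curl step: passing from $\curl a\in L^p$ to $a\in W^{1,p}$ uses essentially the gauge condition $\Div a=0$, the simple connectedness of $\Omega$ (so that no harmonic field obstructs the reconstruction of $a$ from its curl, and no $\|a\|_{L^p}$ term is needed on the right), and the smoothness of $\partial\Omega$ for $L^p$ regularity up to the boundary. Everything else is routine use of H\"older's inequality and Sobolev embeddings in two dimensions.
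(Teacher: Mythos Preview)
Your argument is correct and follows essentially the same route as the paper: set $a=\Ab-\Fb$ in the Coulomb gauge ($\Div a=0$, $\nu\cdot a=0$), use the second Ginzburg--Landau equation to control $\curl a$ in $H^1$, and then invoke div--curl regularity plus Sobolev embedding. The only cosmetic difference is that the paper goes directly through $\|a\|_{H^2}\lesssim\|\curl a\|_{H^1}$ and then embeds $H^2(\Omega)\hookrightarrow C^{0,\alpha}(\overline\Omega)$, whereas you insert the intermediate step $H^1\hookrightarrow L^p$ and use $L^p$ elliptic regularity for the stream function before embedding $W^{1,p}\hookrightarrow C^{0,\alpha}$; the two chains are equivalent.
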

\begin{proof}
Let $a=\Ab-\Fb$. Notice that $a$ satisfies ${\rm div}\,a=0$  in $\Omega$ and
$\nu\cdot a=0$ on $\partial \Omega\,$. Thus, there exists $C(\Omega) >0$ such that for all $a$ satisfying the previous condition
$$\|a\|_{H^2(\Omega)}\leq C(\Omega) \|\curl a\|_{H^1(\Omega)}\,.$$
Since $(\psi,\Ab)$ is a critical point of the functional in
\eqref{eq-3D-GLf}, then
$$
\nabla^\bot\curl a=\frac1{\kappa H}{\rm
Im}\,(\overline\psi\,(\nabla-i\kappa H)\psi)\,.
$$
Consequently, we get
$$\|a\|_{H^2(\Omega)}\leq
C\, \left(\|\curl(\Ab-\Fb)\|_2+\frac1{\kappa H}\|(\nabla-i\kappa
H\Ab)\psi\|_2\, \|\psi\|_\infty\right)\,.$$ This finishes the proof of
the proposition in light of the {continuous}  embedding of
$H^2(\Omega)$ in $C^{0,\alpha}(\overline\Omega)\,$.
\end{proof}

The next proposition provides us with a useful   gauge
transformation.

\begin{prop}\label{prop:guage} Given $\Omega$ and $B_0$ as in the introduction,
there exists a constant  $C>0$  such that the following is true.
\begin{enumerate}
\item Let $\ell>0$, $a_j\in \Omega$, $D(a_j,\ell)\subset\Omega$ and
$x_j\in D(a_j,\ell)$. There exists a function $\varphi_j\in
C^1(\overline{D(a_j,\ell})$)   such that, for all~$x\in D(a_j,\ell)$\,,
\begin{equation}\label{eq4.3(1)}
| \Fb(x)-\left(B_0(x_j)\Ab_0(x-a_j)+\nabla\varphi_j\right)|\leq C\, \ell^2\,.
\end{equation}
\item Let $\ell>0$, $a_j\in \Gamma$ and $x_j\in
\overline{D(a_j,\ell)}\cap\Gamma$. There exist $\nu_j\in
[0,2\pi)$ and a function $\phi_{j}\in C^1(\overline
{D(a_j,\ell)\cap\Omega})$ such that, , for all~$x\in D(a_j,\ell)\cap \Omega$\,,
\begin{equation} \label{eq24.3(2)}
|\Fb(x)-\left(|\nabla B_0(x_j)|\Ab_{{\rm app},\nu_j}(x-a_j)+\nabla\phi_j\right)|\leq C\, \ell^3\,.
\end{equation}
\end{enumerate}
\end{prop}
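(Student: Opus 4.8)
The two assertions are local statements about approximating the fixed vector field $\Fb$ (satisfying $\curl\Fb=B_0$) near a point by the model potentials $\Ab_0$ and $\App$, up to a gradient. The plan is to work in each disc $D(a_j,\ell)$ separately and exploit the smoothness of $B_0$ near $\overline\Omega$. For part (1), I would first fix a vector field $\widetilde\Ab_j$ with $\curl\widetilde\Ab_j=B_0(x_j)$ (namely $\widetilde\Ab_j(x)=B_0(x_j)\Ab_0(x-a_j)$, using $\curl\Ab_0\equiv1$). Then $\Fb-\widetilde\Ab_j$ has curl equal to $B_0(x)-B_0(x_j)$, which by Taylor's formula and the $C^\infty$ smoothness of $B_0$ is $O(|x-x_j|)=O(\ell)$ on $D(a_j,\ell)$. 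The key point is that a vector field on a disc whose curl is small need not itself be small, but it differs from a gradient by a small field: I would solve a Poisson-type problem (or integrate explicitly along rays from $a_j$, using the Poincar\'e lemma with quantitative bounds) to write $\Fb-\widetilde\Ab_j=\nabla\varphi_j+r_j$, where $\curl r_j=B_0(x)-B_0(x_j)$ and $r_j$ is controlled in $C^0$ by the $C^0$-norm of its curl times $\ell$, hence $|r_j|\lesssim \ell\cdot\ell=\ell^2$ on $D(a_j,\ell)$. Absorbing $r_j$ is not possible, so instead one keeps $\nabla\varphi_j$ as the main gradient term and estimates $|r_j|\le C\ell^2$; this gives \eqref{eq4.3(1)}. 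The constant $C$ depends only on $\|B_0\|_{C^1}$ near $\overline\Omega$ and the geometry, as required.

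For part (2), the point $a_j$ lies on $\Gamma$, so $B_0(x_j)=0$ and one must go to the next order in the Taylor expansion. Since $|\nabla B_0|\ge c>0$ on $\Gamma$ by \eqref{eq:B0}, write $B_0(x)=\nabla B_0(x_j)\cdot(x-x_j)+O(|x-x_j|^2)$. Choosing $\nu_j\in[0,2\pi)$ so that $\nb_j=(\cos\nu_j,\sin\nu_j)$ points along $\nabla B_0(x_j)/|\nabla B_0(x_j)|$, one has $\curl\App(x)=-|x|$ up to sign conventions; more precisely I would check from \eqref{eq-A-app-nu} that $\curl\big(|\nabla B_0(x_j)|\App(x-a_j)\big)$ equals the linear approximation $\nabla B_0(x_j)\cdot(x-a_j)$ (possibly after adjusting $\nu_j$ by $\pi$ to fix the sign, and after noting that $a_j$ may differ from $x_j$ only by $O(\ell)$, which contributes an admissible $O(\ell)\cdot(\text{linear})$ error that is $O(\ell^2)$, or one simply takes $x_j=a_j$). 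Then $\Fb-|\nabla B_0(x_j)|\App(\cdot-a_j)$ has curl equal to $B_0(x)$ minus its linear part, which is $O(|x-x_j|^2)=O(\ell^2)$ on $D(a_j,\ell)\cap\Omega$. Applying the same quantitative Poincar\'e lemma (now on the half-disc region $D(a_j,\ell)\cap\Omega$, which one must verify is still a domain on which this works — e.g. star-shaped with respect to an interior point, using that $\partial\Omega$ is smooth and $\ell$ small) yields $|r_j|\le C\ell^2\cdot\ell=C\ell^3$, which is exactly \eqref{eq24.3(2)}.

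The main obstacle, as I see it, is twofold and essentially technical: (a) obtaining the quantitative bound $\|r_j\|_{C^0(D(a_j,\ell))}\le C\ell\,\|\curl r_j\|_{C^0}$ with $C$ \emph{independent of} $\ell$ — this requires either an explicit integral formula (Poincar\'e lemma via line integrals from the center, which on a disc gives exactly this scaling) or a careful scaling argument rescaling $D(a_j,\ell)$ to the unit disc, solving a div-curl system there, and scaling back; one must track how each norm scales in $\ell$. And (b) in part (2), handling the fact that $D(a_j,\ell)\cap\Omega$ is only a \emph{piece} of a disc when $a_j\in\partial\Omega\cap\Gamma$ (possible under \eqref{eq:ass:Gam}); here one needs the domain to remain uniformly ``nice'' (e.g. uniformly star-shaped or of bounded Lipschitz character after rescaling) so that the constant in the Poincar\'e-type estimate does not blow up as $\ell\to0$ — this uses the smoothness of $\partial\Omega$ and the transversality one may assume of $\Gamma$ at the boundary. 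Once these quantitative div-curl estimates are in hand, the proposition follows by the Taylor expansion of $B_0$ to first order (part 1) and second order (part 2) as described above.
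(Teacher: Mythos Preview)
Your approach is correct and essentially matches the paper's: Taylor-expand $B_0$, then use the explicit Poincar\'e-lemma integral (the paper writes $\mathbf{G}_j(y)=\big(\int_0^1 s\,g_j(sy+a_j)\,ds\big)(-y_2,y_1)$, which has $\curl\mathbf{G}_j=g_j$ and $|\mathbf{G}_j|\le C\ell\|g_j\|_\infty$ on $D(0,\ell)$) to peel off a small remainder from a gradient. This is exactly your obstacle (a), resolved by the radial integral formula you mention.

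The one point where the paper proceeds more simply concerns your obstacle (b): rather than work on the possibly awkward region $D(a_j,\ell)\cap\Omega$ and worry about uniform star-shapedness of a half-disc, the paper uses that $\Fb$ and $B_0$ are by hypothesis smooth on a neighborhood of $\overline\Omega$, extends them (without loss of generality) to all of $\R^2$, and carries out the entire construction on the full disc $D(a_j,\ell)$; restricting $\phi_j$ to $D(a_j,\ell)\cap\Omega$ at the end is then trivial. This eliminates concern (b) entirely. For the choice of $\nu_j$ the paper bases it on $\nabla B_0(a_j)$ rather than $\nabla B_0(x_j)$, and then absorbs the $O(\ell)$ difference $|\nabla B_0(x_j)|-|\nabla B_0(a_j)|$ into the quadratic remainder $g_j$; this cleanly handles an arbitrary $x_j\in\overline{D(a_j,\ell)}\cap\Gamma$ without the sign/adjustment issues you flag.
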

\begin{proof}~\\
The function $\varphi_j$ in (1) is constructed in \cite{Att}.\\
We give the construction of the function $\phi_j$ announced in (2).
The vector field $\Fb$ and the function $B_0$ are defined in a
neighborhood of $\overline\Omega$ {(w.l.o.g. we can even assume that
they are defined in $\mathbb R^2$)}.  In particular, $\Fb(x)$ and
$B_0(x)$ are defined in $D(a_j,\ell)$ even when
$D(a_j,\ell)\not\subset\Omega$.

Select $\nu_j\in[0,2\pi)$ such that,
$$\nabla B_0(a_j)=|\nabla B_0(a_j)|(\cos\nu_j,\sin\nu_j)\,.$$
We apply Taylor's formula to the function $B_0$ near $a_j\,$. Since
$a_j\in\Gamma$, we get,
\begin{equation}\label{eq:B0=Aapp'}
B_0(x)=|\nabla B_0(a_j)|(\cos\nu_j,\sin\nu_j)\cdot
(x-a_j)+f_j(x)\,,\end{equation}  where
$$
|f_j(x)|\leq C|x-a_j|^2\leq C\, \ell^2\,,\quad (x\in D(a_j,\ell))\,.
$$
Taylor's formula applied to the function $|\nabla B_0|$ near $a_j$
yields,
$$|\nabla B_0(x_j)|=|\nabla B_0(a_j)|+ e_j\,,$$
where $$|e_j|\leq C|x_j-a_j|\leq C\ell\,.$$
 In that way,
\eqref{eq:B0=Aapp'} becomes,
\begin{equation}\label{eq:B0=Aapp}
B_0(x)=|\nabla B_0(x_j)|(\cos\nu_j,\sin\nu_j)\cdot
(x-a_j)+g_j(x)\,,\end{equation} where
$g_j(x)=f_j(x)+e_j(\cos\nu_j,\sin\nu_j)\cdot (x-a_j)$ and satisfies
$$|g_j(x)|\leq C\, \ell^2\,.$$
Define the vector field:
\begin{align*}
&\mathbf{G}_j(y)=\left(\displaystyle\int_0^1 sg_j(sy+a_j)\,ds\right)\,(-y_2,y_1)\,,\, \mbox{ for }  y=(y_1,y_2)\,.
\end{align*}
Clearly, $|\mathbf G_j(y)|\leq C\ell^3\,$, when $y\in D(0,\ell)$ and
$y+a_j\in\Omega\,$.

We perform the translation $y=x-a_j$ and define
$$
\widetilde \Fb(y)=\Fb(y+a_j)\,,\, \mbox{ for } y\in D(0,\ell)\,.$$
In that
way, the formula in \eqref{eq:B0=Aapp} reads as follows,
$$\curl\left(\widetilde \Fb-|\nabla B_0(x_j)|\Ab_{\rm app,\nu_j}\right)=\curl
{\mathbf G}_j\quad{\rm in}\quad D(0,\ell)\,,$$
where $\Ab_{\rm app,\nu_j}$ is introduced in \eqref{eq-A-app-nu}. \\
 Consequently, we
deduce the existence of  a function $\widetilde \phi_j\in
C^1(\overline{D(0,\ell)})$ such that,
$$\widetilde \Fb-|\nabla B_0(x_j)|\Ab_{\rm app,\nu_j}={\mathbf G}_j+\nabla\widetilde\phi_j\,,\quad{\rm in~}D(0,\ell)\,.$$ The
function $\phi_j$ is defined by $\phi_j(x)=\widetilde\phi_j(x-a_j)\,$, for
$x\in D(a_j,\ell)$.
\end{proof}

\section{Energy upper bound}
 For the statement of the next proposition, we introduce  the quantity
$\psi(\mu_1,\mu_2,\mu_3,\rho)$ which is defined for $\mu_1>0$, $\mu_2>0$,
$\mu_3>0$  and $\rho\in(0,1)$ by
$$
\psi(\mu_1,\mu_2,\mu_3, \rho) := \max \left(   2^{1/\rho} \mu_1^{\frac{1-\rho}{3\rho}}\,,\, 2^{1/(1-\rho)} \mu_2^{-\frac{4-\rho}{3(1-\rho)}}\,,\,2\mu_3^{-1/3}   \right)
$$
\begin{prop}\label{prop:ub}
Let $\Lambda>0$, $\eta\in(0,1/2)$ and $b:\R\to\R_+$ such that
$\displaystyle\lim_{\kappa\to \infty}b(\kappa)= \infty$
 and $\displaystyle\lim_{\kappa\to \infty}\kappa^{-1/2}b(\kappa)=0\,$. There exist positive constants
 $C$,
$ \kappa_0$  and $\ell_0$ such that if $\rho\in\,(0,1)\,$,
$\ell\in\,(0,\ell_0)\,$, $\delta\in\,(0,1)\,$,
$\kappa \geq \kappa_0\,$,
\begin{equation}\label{condiprop}
\kappa\ell\geq \psi\left(\frac{H}{\kappa^2}\sup_{x\in\Gamma}|\nabla
B_0(x)|\,,\frac{H}{\kappa^2}\inf_{x\in\Gamma}|\nabla
B_0(x)|\,,\, \frac{H}{\kappa^2}\inf_{x\in\Gamma}|\nabla
B_0(x)| \,,\,\rho\right)\,,
\end{equation}
 and
$$ b(\kappa)\kappa^{3/2} \leq H\leq \Lambda\kappa^2\,,$$
 then the
ground state energy in \eqref{eq-gse} satisfies,
\begin{equation}\label{eq:ub1}
\E \leq \kappa \int_{\Gamma}\left(|\nabla
B_0(x)|\frac{H}{\kappa^2}\right)^{1/3}\,E\left(|\nabla
B_0(x)|\frac{H}{\kappa^2}\right)\,ds(x)+ a + b  \,,
\end{equation}
where
\begin{itemize}
\item  $ds$ is the arc-length measure on $\Gamma$,
\item
\begin{equation}\label{deflambda}
\check{\lambda}=\left(\sup_{x\in\Gamma}|\nabla
B_0(x)|\right)\frac{H}{\kappa^2}\,,
\end{equation}
\item
$ a  = C\left( \check{\lambda}^{-5/9}\kappa^{1/3}\ell^{-2/3}+
\check{\lambda}^{-4/3}|\ln\check{\lambda}|^{-1}|\ln(\check{\lambda}
\kappa^{-1}\ell^{-1})|^2\kappa^\rho\ell^{\rho-1}+
\left(\delta\kappa^2+\delta^{-1}\kappa^2 H^2\ell^6\right)\ell
\right)$
\item
$b= C\left(\eta+\delta+
\check{\lambda}^{\frac{2(1-\rho)}3}\kappa^{-2\rho}\,\ell^{-2\rho}\right)\displaystyle\frac{\kappa^3}{H}
$\,.
\end{itemize}
\end{prop}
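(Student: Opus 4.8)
**The plan is to construct a global trial state by gluing together local minimizers of the reduced Ginzburg-Landau functional $\mathcal E_{L,R}$ supported near $\Gamma$, together with a matching trial vector potential that corrects the difference between $\Fb$ and the approximating linear magnetic field.** First I would partition a tubular neighborhood of $\Gamma$ into curvilinear boxes of size comparable to $\ell$, indexed by a set $\mathcal J$, centered at points $a_j\in\Gamma$; on the boundary pieces $\Gamma_{\rm bnd}$ the boxes become half-boxes touching $\partial\Omega$. In each box I invoke Proposition~\ref{prop:guage}(2) to pass by a gauge transformation to the approximate magnetic potential $|\nabla B_0(x_j)|\Ap_{{\rm app},\nu_j}$, the error being $O(\ell^3)$; this is where the $\delta^{-1}\kappa^2H^2\ell^6$ and $\delta\kappa^2$ terms in $a$ enter, after splitting $|(\nabla - i\kappa H\Ab)\psi|^2$ with a Cauchy--Schwarz/Young inequality with parameter $\delta$ to absorb the cross term. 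After the gauge change, $\mathcal G$ (from \eqref{eq-GL-r-new}) with $L\kappa^3 := \kappa H|\nabla B_0(x_j)|$, i.e. $L = \frac{H}{\kappa^2}|\nabla B_0(x_j)| = \check\lambda$-ish, is exactly the functional whose infimum after the rescaling $x\mapsto \sqrt m\,\kappa\,x$ equals $\er(\nu_j,L;R)$ by \eqref{eq-gs-er}, with $R = \sqrt m\,\kappa\,\ell$.

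**Second, I would replace $\er(\nu_j,L;R)$ by $\er(L;R)$ and then by $2R\,E(L)$, tracking the errors via Theorems~\ref{thm-tdl-e} and~\ref{thm-FK}.** The inequality $\er(0,L;R)\le \er(L;(1-L^{2/3}R^{-2\rho})R) + CL^{-4/3}|\ln L|^{-1}|\ln(L^{1/3}R^{-\rho})|^2 R^\rho$ of Theorem~\ref{thm-tdl-e} is precisely the source of the second term in $a$ — indeed with $L=\check\lambda$, $R=\sqrt{\check\lambda^{1/3}\cdot}\,\kappa\ell\approx \check\lambda^{1/3}\kappa\ell$ the quantity $L^{-4/3}R^\rho$ scales as $\check\lambda^{-4/3}(\check\lambda^{1/3}\kappa\ell)^\rho = \check\lambda^{-4/3+\rho/3}\kappa^\rho\ell^\rho$, and one factor of $\ell^{-1}$ comes from dividing by the arc-length of a box and re-summing. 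Next, Theorem~\ref{thm-FK} gives $\er(L;R)/(2R)\le E(L) + C(1+L^{-2/3})R^{-2/3}$, and $L^{-2/3}R^{-2/3} = \check\lambda^{-2/3}(\check\lambda^{1/3}\kappa\ell)^{-2/3} = \check\lambda^{-8/9}\kappa^{-2/3}\ell^{-2/3}$, which after multiplying by the total length of $\Gamma$, by $\kappa$, and by the conversion factor $L^{1/3}=\check\lambda^{1/3}$ (coming from $\kappa$ vs. the natural length $L^{1/3}\kappa^{-1}\cdot R$ scale) produces the $\check\lambda^{-5/9}\kappa^{1/3}\ell^{-2/3}$ term. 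The shrinkage factor $(1-L^{2/3}R^{-2\rho})$ in the domain radius, when converted back and integrated against $ds$, is responsible for the $\check\lambda^{2(1-\rho)/3}\kappa^{-2\rho}\ell^{-2\rho}\cdot\frac{\kappa^3}{H}$ term in $b$, since $E(L)\approx -L^{-4/3}(1-\lambda_0L^{2/3})$ by Theorem~\ref{thm:Lto0} and a relative perturbation of the radius of order $L^{2/3}R^{-2\rho}$ multiplies this.

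**Third, for the regions of $\Gamma$ where $|\nabla B_0|$ varies and for the transition points $\Gamma\cap\partial\Omega$, I would use a Riemann-sum argument:** the integrand $x\mapsto (|\nabla B_0(x)|\frac{H}{\kappa^2})^{1/3}E(|\nabla B_0(x)|\frac{H}{\kappa^2})$ is continuous along $\Gamma$ (continuity of $E$ by Theorem~\ref{thm-FK}, smoothness of $B_0$), so summing the box contributions $\sum_j 2R_j\,E(L_j)\cdot(\text{Jacobian})$ and comparing to the integral costs an error $O(\eta)\cdot\frac{\kappa^3}{H}$ after choosing $\ell$ small enough depending on the modulus of continuity — this is the $\eta$ in $b$, where $\eta$ was fixed in advance and $\ell_0=\ell_0(\eta)$ is chosen accordingly. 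I would also put $\psi\equiv 0$ outside the tubular neighborhood and on the boundary boxes use that $\er(\nu,L;R)$ with a half-disc is still bounded by the full-strip quantity (extension by zero). For the vector potential: I would simply take $\Ab = \Fb$ in the trial pair, so the magnetic term $(\kappa H)^2\|\curl\Ab - B_0\|^2$ vanishes identically, at the cost of the $O(\ell^3)$ discrepancy between $\Fb$ and the model potentials inside each box, already accounted for above.

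**The main obstacle I anticipate is the bookkeeping of the many competing error scales** — in particular verifying that condition~\eqref{condiprop}, $\kappa\ell\ge \psi(\ldots,\rho)$, is exactly what is needed to make all the logarithmic and power factors in Theorem~\ref{thm-tdl-e} well-defined and of the claimed sizes (the three arguments of $\psi$ encode the three constraints $R\ge 2^{1/\rho}L^{1/(3\rho)}$, $R\ge 2^{1/(1-\rho)}L^{-1/(1-\rho)}$, $R\ge 2$ from Theorem~\ref{thm-tdl-e} after substituting $L=\check\lambda\approx \frac{H}{\kappa^2}|\nabla B_0|$ and $R\approx L^{1/3}\kappa\ell$), and that the worst box (where $|\nabla B_0|$ is largest, hence $L=\check\lambda$) dominates the error. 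A secondary technical point is that near the curvilinear coordinates the metric is not flat, so the box partition must be done in boundary-adapted coordinates and the Jacobian factors $1+O(\ell)$ must be tracked; these contribute only lower-order corrections already absorbed in the stated $a$. Once the box estimate is in place, the proposition follows by summing over $j\in\mathcal J$ and using $\mathcal E_0(\psi,\Fb;\Omega) = \E$ for the trial state (no magnetic energy).
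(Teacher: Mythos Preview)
Your proposal is correct and follows essentially the same approach as the paper's proof. The paper simplifies the geometry slightly by using pairwise disjoint Euclidean disks $D(a_j,\ell)\subset\Omega$ centered on $\Gamma$ (so no curvilinear coordinates or Jacobians are needed) and by simply omitting the finitely many disks near $\Gamma\cap\partial\Omega$ rather than using half-boxes---the uncovered arc-length is then $\le C\eta$ and its contribution is bounded via Theorem~\ref{thm:Lto0}---but the core argument (trial pair $(w,\Fb)$, gauge via Proposition~\ref{prop:guage}(2), Cauchy--Schwarz with parameter $\delta$, error control via Theorems~\ref{thm-tdl-e} and~\ref{thm-FK}, Riemann-sum comparison to the integral over $\Gamma$) is exactly as you describe, and your identification of the origin of each term in $a$ and $b$ is accurate.
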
~
\begin{proof}~\\
{\bf Step~1.} {\it Existence of $\ell_0$.}

Recall the assumption that $\Gamma$ is the union of a finite number
of simple smooth curves and $\Gamma\cap\partial\Omega$ is a finite
set. Given $\eta>0$, there exists a constant $\ell_1\in(0,1)$ such
that, for all $a\in \Gamma$ and $\ell\in(0,\ell_1)$ with
$\overline{D(a,\ell)}\subset\Omega$, then $D(a,\ell) \cap \Gamma$ is
connected and \begin{equation}\label{eq:arc=euc}
2\ell-\frac{\eta}2\ell\leq \int_{D(a,\ell)\cap\Gamma}ds(x)\leq
2\ell+\frac{\eta}2\ell\,.\end{equation}
Notice that $\displaystyle\int_{D(a,\ell)\cap\Gamma}ds(x)$ is the
arc-length (along $\Gamma$) of $D(a,\ell)\cap\Gamma$. Thus, the
choice of $\ell_1$ is such that the arc-length of
$D(a,\ell)\cap\Gamma$ is approximately $2\ell$, whenever
$\ell\in(0,\ell_1)$.

The arc-length measure of $\Gamma$ is denoted by $|\Gamma|$. By
assumption, $\Gamma$ consists of a finite number of simple smooth
curves $(\Gamma_i)_{i=1}^k$. Let
$$\ell_0=\min\left(\frac{\eta}{16}\min_{1\leq i\leq k}|\Gamma_i|\,,\,\frac{\ell_1}{16}\left(1+\frac{\eta}4\right)^{-1}\right)\,.$$
If $\ell\in(0,\ell_0)$, then $\ell<\ell_1$, \eqref{eq:arc=euc} is
satisfied and
\begin{equation}\label{eq:ell0}
\frac{2\ell}{|\Gamma_i|}<\frac{\eta}4\,,\quad
\left(1+\eta\right)\ell<\frac{\ell_1}4\,.\end{equation}

{\bf Step 2.} {\it A covering of $\Gamma$.}

In the sequel, we suppose that $\ell\in(0,\ell_0)$.  Consider
$i\in\{1,\cdots,k\}$ and the curve $\Gamma_i$. Let $\n_i\in\mathbb
 N$ be the unique natural number satisfying
\begin{equation}\label{eq:ni}
\frac{|\Gamma_i|}{2\ell}\left(1+\frac{\eta}4\right)^{-1}-1<\n_i\leq\frac{|\Gamma_i|}{2\ell}\left(1+\frac{\eta}4\right)^{-1}\,.
\end{equation}
Select $\n_i$ distinct points $(b_{j,i})_j$ on $\Gamma_i$ such that,
$$\forall~j\,,\quad{\rm dist}_{\Gamma_i}(b_{j,i},b_{j+1,i})=\frac{|\Gamma_i|}{\n_i}\,,$$
where ${\rm dist}_{\Gamma_i}$ is the arc-length measure on
$\Gamma_i$.

Obviously, the Euclidean distance $e_j:=|b_{j+1,i}-b_{j,i}|$
satisfies $e_j\leq {\rm
dist}_{\Gamma_i}(b_{j,i},b_{j+1,i})=\frac{|\Gamma_i|}{\n_i}$. Thanks
to \eqref{eq:ni} and \eqref{eq:ell0}, we have,
$$e_j\leq 2\ell\left(1+\eta\right)<\ell_1\,.$$
Thus, if $\overline{D(b_{j,i},e_j)}\subset \Omega$, we can use
\eqref{eq:arc=euc} with $\ell=e_j$ and get,
$$2e_j\left(1-\frac{\eta}4\right)\leq 2\frac{|\Gamma_i|}{\n_i}\leq
2e_j\left(1+\frac{\eta}4\right)\,.$$ Thanks to \eqref{eq:ni}, this
leads to
$$e_j\geq \frac{|\Gamma_i|}{\n_i}\left(1-\frac{\eta}4\right)\geq2\ell\,.$$
Now, define the index set
$$\mathcal J_i=\{j~:~\overline{D(b_{j,i},e_j)}\subset\Omega\}\,,$$
and $N_i={\rm Card}\,\mathcal J_i$. Notice that, if $j\in\mathcal
J_i$, then $e_j\geq 2\ell$ and $\overline{D(b_{j,i},\ell)}\subset
\overline{D(b_{j,i},e_j/2)}\subset\Omega$. The sets
$(D(b_{j,i},\ell))_{j\in\mathcal J_{i}}$ are pairwise disjoint.

Since $\Gamma_i\cap\partial\Omega$ is a finite set, then there
exists a constant $c$ such that,
$${\rm If}~a\in\Gamma_i\; {\rm and}~{\rm dist}(a,\partial\Omega)\geq
c\ell\,,{~\rm then~}\overline{D(a,\ell)}\subset\Omega\,.$$ Consequently, the number $N_i$ satisfies
$$\n_i-C\leq N_i\leq \n_i\,,$$
where $C>0$ is a constant.  Thus, thanks to \eqref{eq:ni} and
\eqref{eq:ell0},
$$|\Gamma_i|\left(1+\frac{\eta}4\right)^{-1}-C\ell\leq N_i\times 2\ell\leq
|\Gamma_i|\left(1+\frac{\eta}4\right)^{-1}\,.$$
Now, collecting the points $(b_{j,i})_{j\in\mathcal
J_i,i\in\{1,\cdots,k\}}$, we get the collection of points on
$\Gamma$,
$$(a_j)_{j\in\mathcal J}=(b_{j,i})_{j\in\mathcal
J_i,i\in\{1,\cdots,k\}}\,,$$ such that,
\begin{align}
&\forall~j\in\mathcal J\,,\quad a_j\in\Gamma\quad {\rm and}\quad \overline{D(a_j,\ell)}\subset\Omega\,,\nonumber\\
&N={\rm Card}\,\mathcal J=\sum_{i=1}^kN_i\quad{\rm and}\quad |\Gamma|=\sum_{i=1}^k|\Gamma_i|\,,\nonumber\\
&|\Gamma|\left(1+\frac{\eta}4\right)^{-1}-C\ell\leq N\times 2\ell\leq |\Gamma|\left(1+\frac{\eta}4\right)^{-1}\,.\label{estN}
\end{align}
Notice that
$$\bigcup_j\Big(\Gamma\cap\overline{D(a_j,\ell)}\Big)\subset \Gamma \,.$$
and the arc-length measure
$$\left|\bigcup_j\Big(\Gamma\cap\overline{D(a_j,\ell)}\Big)\right|=\int_{\bigcup_j\Big(\Gamma\cap\overline{D(a_j,\ell)}\Big)}ds(x)
=\sum_j\int_{\Gamma\cap\overline{D(a_j,\ell)}}ds(x)$$
satisfies
$$|\Gamma|-C\eta
\leq 
\left|\bigcup_j\Big(\Gamma\cap\overline{D(a_j,\ell)}\Big)\right|\leq 
 |\Gamma|+C\eta\,.$$

Thus, the arc-length measure of the set
$\Gamma\setminus\bigcup_j\Big(\Gamma\cap\overline{D(a_j,\ell)}\Big)$
satisfies,
\begin{equation}\label{eq:lenG}
\left|\Gamma\setminus\bigcup_j\Big(\Gamma\cap\overline{D(a_j,\ell)}\Big)\right|\leq
C\eta\,.\end{equation}

{\bf Step~3.} {\it Construction of a test configuration.}  For each
$j$, select an arbitrary point  $x_j\in \overline{D(a_j,\ell)}\cap
\Gamma$ and write
$$\nabla B_0(x_j)=|\nabla B_0(x_j)|(\cos\nu_j,\sin\nu_j)\,,$$
with $\nu_j\in[0,2\pi)$.\\
 Define
\begin{equation}\label{eq:L,R}
L=L_j=|\nabla B_0(x_j)|\,\frac{H}{\kappa^2}\,,\quad R=R_j=L^{1/3}\kappa\ell\,.
\end{equation}
Thanks to the assumption in \eqref{condiprop}, there holds the
condition,
\begin{equation}\label{eq:condR}
R \geq \max\left(2^{1/\rho}L^{1/(3\rho)}\,,\,
2^{1/(1-\rho)}L^{-1/(1-\rho)}\,,2\right)\,.
\end{equation}
%
 We define a function $w\in
H^1(\Omega)$ as follows. Consider the set of indices $\mathcal
J=\{j~:~D(a_j,\ell)\subset\Omega\}\,$. Let $x\in\Omega\,$ and
$j\in\mathcal J$. If $x\in D(a_j,\ell)\,$, define,
\begin{equation}\label{eq:fw}
w(x)=e^{i\kappa H\phi_j}u_{L,\ell,\nu_j}(x-a_j)\,,
\end{equation}
where $u_{R,L,\nu_j}\in H^1_0(D(0,\ell))$ is a minimizer of the
functional in \eqref{eq-GL-r-new} with $\nu=\nu_j\,$, and $\phi_j$
is the function constructed in Proposition~\ref{prop:guage}.  If
$x\not\in\displaystyle\bigcup _{j\not\in\mathcal J} \overline{
D(a_j,\ell)}\,$, we set $w(x)=0\,$. \\ Clearly, $w\in
H^1(\Omega)\,$.

{\bf Step~4.} {\it Upper bound of $\mathcal E (w,\Fb)$.}

Notice that $\curl\Fb=B_0$ and that  the magnetic energy term in
\eqref{eq-3D-GLf} vanishes for $\Ab=\Fb$. Thus, we have,
\begin{equation}\label{eq:dec;en(w,A)}
\mathcal E(w,\Fb)=\mathcal E_0(w,\Fb;\Omega)=\sum_j \mathcal
E_0(w,\Fb; D(a_j,\ell))\,,\end{equation} where the functional
$\mathcal E_0$ is defined in \eqref{eq:E0}.

Recalling the definition of $w$,  we observe that,
$$\mathcal E_0(w,\Fb; D(a_j,\ell))=
\mathcal E_0(u_{L,\ell,\nu_j}(x-a_j),\Fb-\nabla\phi_j;
D(a_j,\ell))\,.$$ Thanks to the choice of $\phi_j$, we infer from Proposition~\ref{prop:guage},
\begin{equation}
\left|\,|\nabla B_0(x_j)|\Ab_{{\rm
app},\nu_j}(x-a_j)-(\Fb-\nabla\phi_j)\right|\leq  C\,
\ell^3\,.\end{equation}
As a consequence, applying the Cauchy-Schwarz inequality, we get
that,  for any $\delta >0\,$,
$$\mathcal E_0(w,\Fb; D(a_j,\ell))\leq
(1+\delta)\mathcal E_0(u_{L,\ell,\nu_j}(x-a_j), |\nabla B_0(x_j)|\Ab_{{\rm app},\nu_j}(x-a_j);D(a_j,\ell))
+r_1\,,$$
 where
$$r_1=C\left(\delta\kappa^2+
\delta^{-1}\kappa^2H^2\ell^6\right)\int_{D(a_j,\ell)}|u_{L,\ell,\nu_j}(x-a_j)|^2\,dx
\,.$$  Recall that $u_{L,\ell,\nu_j}$ being a minimizer, it
satisfies
 $$|u_{L,\ell,\nu_j}|\leq 1\,.
 $$
  Thus,
\begin{equation}\label{eq:r1} r_1\leq C\left(\delta\kappa^2+
\delta^{-1}\kappa^2H^2\ell^6\right)\ell^2\,.\end{equation}
Now,  performing the translation $x\mapsto x-a_j$, we observe that,
$$\mathcal E_0(w,\Fb; D(a_j,\ell))\leq
(1+\delta)\mathcal E_0(u_{L,\ell,\nu_j},|\nabla B_0(x_j)|\Ab_{{\rm
app},\nu_j}; D(0,\ell))+r_1\,.$$
 With $L=L_j$ and $R=R_j$  in
\eqref{eq:L,R}, we get in light of Theorem~\ref{thm-tdl-e}:
%
%
$$
\mathcal E_0(w,\Ab; D(a_j,\ell))\leq
(1+\delta)\er\left(L_j;\big(1-L_j^{2/3}R_j^{-2\rho}\big)R_j\right)+CL_j^{-4/3}|\ln L_j|^{-1}|\ln(L_j^{1/3}R_j^{-\rho})|^2R_j^\rho+r_1\,.
$$
Thanks to Theorem~\ref{thm-FK}, we deduce that,
\begin{multline}\label{eq:D(0,ell)}
\mathcal E_0(w,\Fb; D(a_j,\ell))\leq 2 (1+\delta)\,
\big(1-L_j^{2/3}R_j^{-2\rho}\big)R_j\,E(L_j)+C\,
(1+L_j^{-2/3})R_j^{1/3}\\+ CL_j^{-4/3}|\ln
L_j|^{-1}|\ln(L_j^{1/3}R_j^{-\rho})|^2R_j^\rho+r_1\,.
\end{multline}
Recall the definition of $L_j$ and $R_j$ in \eqref{eq:L,R}, and that
the number of disks $D(a_j,\ell)$  is inversely proportional to
$\ell$, i.e. of order $\ell^{-1}\,$.

In the sequel, the following remark will be used. The two terms
$$\check\lambda=\left(\sup_{x\in\Gamma}|\nabla B_0(x)|\right)\frac{H}{\kappa^2}\quad{\rm and}\quad
\underline\lambda=\left(\inf_{x\in\Gamma}|\nabla
B_0(x)|\right)\frac{H}{\kappa^2}$$ are of the same order, i.e. there
exist constants $c_1>0$ and $c_2>0$ such that $c_1\check\lambda\leq
\underline\lambda\leq c_2\check\lambda$. Thus, terms controlled by
$\underline \lambda$ are controlled by $\check\lambda$ and vice
versa.  We will express all terms controlled by $\underline\lambda$
and $\check\lambda$ in the form $\mathcal O(\check\lambda)$.

Substituting \eqref{eq:D(0,ell)} into \eqref{eq:dec;en(w,A)} yields,
\begin{multline}\label{eq:ub}
\mathcal E(w,\Fb)\leq 2\kappa \ell(1+\delta)
\big(1-\check{\lambda}^{\frac{2(1-\rho)}{3}}\kappa^{-2\rho}\ell^{-2\rho}\big)\left(\sum_j\left(|\nabla
B_0(x_j)|\,\frac{H}{\kappa^2}\right)^{1/3}\,
E\left(|\nabla B_0(x_j)|\frac{H}{\kappa^2}\right)\right) \\
+C\, \check{\lambda}^{-5/9}
\kappa^{1/3}\ell^{-2/3}+C\check{\lambda}^{-4/3}|\ln\check{\lambda}|^{-1}|\ln(\check{\lambda}
\kappa^{-1}\ell^{-1})|^2\kappa^\rho\ell^{\rho-1}+C\, r_1\ell^{-1}\,.
\end{multline}
Thanks to \eqref{estN} and the upper bound on $E(\cdot)$ obtained in
Theorem~\ref{thm:Lto0}, the term
\begin{equation}\label{eq:Rsum}
\sum_j2\ell\left(|\nabla
B_0(x_j)|\,\frac{H}{\kappa^2}\right)^{1/3}\, E\left(|\nabla
B_0(x_j)|\frac{H}{\kappa^2}\right)\end{equation} is of order
$\kappa^2/H$. Thus, \eqref{eq:ub} becomes,
\begin{multline}\label{eq:ub'}
\mathcal E(w,\Fb)\leq \kappa  \left(\sum_j2\ell\left(|\nabla
B_0(x_j)|\,\frac{H}{\kappa^2}\right)^{1/3}\,
E\left(|\nabla B_0(x_j)|\frac{H}{\kappa^2}\right)\right) +C\Big(\delta+\check{\lambda}^{\frac{2(1-\rho)}{3}}\kappa^{-2\rho}\ell^{-2\rho}\Big)\frac{\kappa^3}{H}\\
+C\, \check{\lambda}^{-5/9}
\kappa^{1/3}\ell^{-2/3}+C\check{\lambda}^{-4/3}|\ln\check{\lambda}|^{-1}|\ln(\check{\lambda}
\kappa^{-1}\ell^{-1})|^2\kappa^\rho\ell^{\rho-1}+C\, r_1\ell^{-1}\,.
\end{multline}
In \eqref{eq:Rsum}, replacing $2\ell$ by the arc-length measure of
$D(a_j,\ell)\cap\Gamma$ produces an error $\eta\ell/2$ and the sum
becomes a Riemann sum
over $V_\ell =\bigcup_{j\in\mathcal J} \big(\Gamma\cap \overline{D(a_j,\ell)}\big)$.\\
The points $x_j$ can be selected such that the Riemann sum is a {\it
lower} Riemann sum. Thus,
\begin{multline*}
\sum_j2\ell\left(|\nabla
B_0(x_j)|\,\frac{H}{\kappa^2}\right)^{1/3}\, E\left(|\nabla
B_0(x_j)|\frac{H}{\kappa^2}\right)\\
\leq \int_{V_\ell}\left(|\nabla
B_0({x})|\,\frac{H}{\kappa^2}\right)^{1/3}\, E\left(|\nabla
B_0({x})|\frac{H}{\kappa^2}\right)\,ds(x)+C\eta\frac{\kappa^2}H\,.
\end{multline*}
Inserting this into \eqref{eq:ub'}, we get,
\begin{multline*}
\mathcal E(w,\Fb)\leq \kappa\, \left(\int_{V_\ell}\left(|\nabla
B_0({x})|\,\frac{H}{\kappa^2}\right)^{1/3}\,
E\left(|\nabla B_0({x})|\frac{H}{\kappa^2}\right)\,ds(x)\right)+C\Big(\eta+\delta+\check{\lambda}^{\frac{2(1-\rho)}{3}}\kappa^{-2\rho}\ell^{-2\rho}\Big)\frac{\kappa^3}{H}\\
+C{\check{\lambda}^{-5/9}}\kappa^{1/3}\ell^{-2/3}+C\check{\lambda}^{-4/3}|\ln\check{\lambda}|^{-1}|\ln(\check{\lambda}
\kappa^{-1}\ell^{-1})|^2\kappa^\rho\ell^{\rho-1}+Cr_1\ell^{-1}\,.
\end{multline*}
As pointed earlier, the arc-length measure of the set
$\Gamma\setminus V_\ell $ does not exceed $C\eta$. Recall the upper
bound on $E(\cdot)$ obtained in Theorem~\ref{thm:Lto0}. In that way,
we get,
$$\left|\int_{\Gamma\setminus V_\ell}\left(|\nabla
B_0({x})|\,\frac{H}{\kappa^2}\right)^{1/3}\, E\left(|\nabla
B_0({x})|\frac{H}{\kappa^2}\right)\,ds(x)\right|\leq
C\eta\frac{\kappa^2}{H}\,.$$
Consequently, we deduce the following
upper bound,
\begin{multline*}
\mathcal E(w,\Fb)\leq \kappa\, \left(\int_\Gamma\left(|\nabla
B_0({x})|\,\frac{H}{\kappa^2}\right)^{1/3}\,
E\left(|\nabla B_0({x})|\frac{H}{\kappa^2}\right)\,ds(x)\right)+C\Big(\eta+\delta+\check{\lambda}^{\frac{2(1-\rho)}{3}}\kappa^{-2\rho}\ell^{-2\rho}\Big)\frac{\kappa^3}{H} \\
+C{\check{\lambda}^{-5/9}}\kappa^{1/3}\ell^{-2/3} +C\check{\lambda}^{-4/3}|\ln\check{\lambda}|^{-1}|\ln(\check{\lambda}
\kappa^{-1}\ell^{-1})|^2\kappa^\rho\ell^{\rho-1}+Cr_1\ell^{-1}+C\eta\frac{\kappa^3}{H}\,.
\end{multline*}

The definition of the ground state energy in \eqref{eq-gse} tells us
that $\E\leq \mathcal E(w,\Fb)$. Recalling the definition of $r_1$
in \eqref{eq:r1} finishes the proof of \eqref{eq:ub1}.
\end{proof}

As a straightforward application of Proposition~\ref{prop:ub}, we
deduce the following upper bound on the ground state energy in
\eqref{eq-gse}, valid in the regime $\kappa^{3/2}\ll H\lesssim
\kappa^2$.

\begin{thm}\label{thm:up}
Let $\Lambda>0$ and $\epsilon:\R\to\R_+$ such that
$\displaystyle\lim_{\kappa\to \infty}\kappa\epsilon(\kappa)= \infty$
 and $\displaystyle\lim_{\kappa\to \infty}\epsilon(\kappa)=0\,$.
If $\epsilon(\kappa)\kappa^2 \leq H\leq \Lambda\kappa^2$, then the
ground state energy in \eqref{eq-gse} satisfies,
\begin{equation}\label{eq:ub1*}
\E \leq \kappa\left(\int_{\Gamma}\left(|\nabla
B_0(x)|\frac{H}{\kappa^2}\right)^{1/3}\,E\left(|\nabla
B_0(x)|\frac{H}{\kappa^2}\right)\,ds(x)\right)+\frac{\kappa^3}{H}\,o(1)
\,,\quad(\kappa\to \infty)\,,
\end{equation}
where $ds$ is the arc-length measure on $\Gamma$.
\end{thm}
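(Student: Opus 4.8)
The statement will be deduced directly from Proposition~\ref{prop:ub}; since the leading term in \eqref{eq:ub1} is exactly the one appearing in \eqref{eq:ub1*}, the whole task is to choose the free parameters $\rho$, $\ell$, $\delta$ (and, at the very end, to let $\eta\to0$) in such a way that the two remainders $a$ and $b$ of Proposition~\ref{prop:ub} are both $\frac{\kappa^3}{H}\,o(1)$ as $\kappa\to\infty$.

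First I would check applicability. Since $\epsilon(\kappa)\kappa^2\le H\le\Lambda\kappa^2$ with $\kappa\epsilon(\kappa)\to\infty$ and $\epsilon(\kappa)\to0$, and since we work in the regime $\kappa^{3/2}\ll H\lesssim\kappa^2$, one can fix a function $b(\kappa)\to\infty$ with $\kappa^{-1/2}b(\kappa)\to0$ and $b(\kappa)\kappa^{3/2}\le H$, so that the hypothesis of Proposition~\ref{prop:ub} on $H$ is met. I would also record that, $B_0$ being fixed, the quantities $\check\lambda=\bigl(\sup_\Gamma|\nabla B_0|\bigr)H\kappa^{-2}$ and $\underline\lambda=\bigl(\inf_\Gamma|\nabla B_0|\bigr)H\kappa^{-2}$ of Proposition~\ref{prop:ub} are comparable, both lie in $[c\,\epsilon(\kappa),C\Lambda]$, and $\kappa^3/H\approx\kappa\,\check\lambda^{-1}$.

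Next, for a fixed $\eta\in(0,1/2)$, I would pick $\rho\in(0,1)$, $\ell=\ell(\kappa)\in(0,\ell_0)$ and $\delta=\delta(\kappa)\in(0,1)$ as functions of $\kappa$ adapted to the size of $\check\lambda\approx H/\kappa^2$: take $\rho$ small---a constant when $H$ stays of order $\kappa^2$, a slowly vanishing function when $H\ll\kappa^2$---so that the first entry $2^{1/\rho}\check\lambda^{(1-\rho)/(3\rho)}$ of the function $\psi$ in \eqref{condiprop} tends to $0$; take $\ell=\kappa^{\sigma-1}\,\psi(\check\lambda,\underline\lambda,\underline\lambda,\rho)$ for a small $\sigma>0$, which forces \eqref{condiprop} while keeping $\kappa\ell\to\infty$ and $\ell\to0$; and take $\delta\to0$ with $\delta\gg\kappa^{-2}$. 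With these choices I would verify, term by term, that each piece of $a$---namely $\check\lambda^{-5/9}\kappa^{1/3}\ell^{-2/3}$, the logarithmic term $\check\lambda^{-4/3}|\ln\check\lambda|^{-1}|\ln(\check\lambda\kappa^{-1}\ell^{-1})|^2\kappa^\rho\ell^{\rho-1}$, and $(\delta\kappa^2+\delta^{-1}\kappa^2H^2\ell^6)\ell$---is $\frac{\kappa^3}{H}\,o(1)$, and likewise that the pieces $\check\lambda^{2(1-\rho)/3}\kappa^{-2\rho}\ell^{-2\rho}\cdot\frac{\kappa^3}{H}$ and $\delta\cdot\frac{\kappa^3}{H}$ of $b$ are $\frac{\kappa^3}{H}\,o(1)$. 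Plugging these into \eqref{eq:ub1} gives, for each fixed $\eta$, $\E\le\kappa\int_\Gamma(\cdots)\,ds+C\eta\,\frac{\kappa^3}{H}+\frac{\kappa^3}{H}\,o(1)$; dividing by $\kappa^3/H$, taking $\limsup_{\kappa\to\infty}$, and then sending $\eta\to0$ yields \eqref{eq:ub1*}.

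The main obstacle is the simultaneous control of $\ell$. The covering construction underlying \eqref{condiprop} forces $\ell$ to be at least of order $\kappa^{-1}\psi$, whereas the term $\delta\kappa^2\ell$ and the gauge-approximation error $\delta^{-1}\kappa^2H^2\ell^7$ appearing in $a$ push $\ell$ downward; reconciling these constraints is exactly why $\delta$ must be allowed to tend to $0$ and why $\rho$ may need to depend on $\kappa$ once $H\ll\kappa^2$, and it is here that the hypothesis $H\gg\kappa^{3/2}$ is genuinely used. Everything else---the passage from the finite Riemann sum over the covering disks to the integral over $\Gamma$, and the final $\eta\to0$ limit---is either already contained in Proposition~\ref{prop:ub} or entirely routine.
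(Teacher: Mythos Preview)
Your overall strategy is exactly the paper's: invoke Proposition~\ref{prop:ub} and tune $\rho,\ell,\delta$ so that the remainders $a$ and $b$ are $\frac{\kappa^3}{H}o(1)$, then let $\eta\to0$. The difficulty, as you correctly flag, is the simultaneous control of $\ell$; unfortunately the concrete recipe you propose does not close.

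Taking $\ell=\kappa^{\sigma-1}\psi(\check\lambda,\underline\lambda,\underline\lambda,\rho)$ with a small $\sigma>0$ does guarantee \eqref{condiprop}, but when $\check\lambda$ is near the bottom of the admissible range (say $\check\lambda\approx\kappa^{-1/2}$) the second branch of $\psi$ dominates and behaves like $\check\lambda^{-(4-\rho)/(3(1-\rho))}\approx\kappa^{2/3}$ for small $\rho$, forcing $\ell\approx\kappa^{\sigma-1/3}$. Then the gauge error in $a$, namely $\delta^{-1}\kappa^2H^2\ell^7$, is at least of order $\kappa^{7\sigma+8/3}$ (even with your constraint $\delta\gg\kappa^{-2}$), and this can never be $o(\kappa^3/H)\approx o(\kappa^{3/2})$ for any $\sigma>0$. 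Letting $\rho$ depend on $\kappa$ and tend to $0$ does not rescue this, since the factor $2^{1/\rho}$ in $\psi$ then explodes. In short, tying $\ell$ to $\psi$ makes $\ell$ far too large in the regime $H$ close to $\kappa^{3/2}$.

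The paper avoids this by keeping $\rho\in(0,\tfrac18)$ \emph{fixed} and choosing $\ell$ independently of $\psi$: it sets
\[
\ell=\epsilon_1^{1/8}\kappa^{-3/4}\check\lambda^{-1/2},\qquad
\delta=\epsilon_1^{7/8}|\ln\kappa|\,\kappa^{-1/4}\check\lambda^{-1/2},\qquad
\epsilon_1=\kappa^{-2/7}|\ln\kappa|^{-8/7},
\]
which balances $\delta\kappa^2\ell$ against $\delta^{-1}\kappa^2H^2\ell^7$ (both become $\approx|\ln\kappa|^{\pm1}\kappa\check\lambda^{-1}$), and then verifies \eqref{condiprop} \emph{a posteriori} by a separate computation exploiting $\rho<\tfrac18$ and $\check\lambda\gg\kappa^{-1/2}$. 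Your outline has the right shape, but this specific balancing step is the actual content of the proof and cannot be replaced by the coarse choice $\ell\sim\kappa^{\sigma-1}\psi$.
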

\begin{proof}
We use the upper bound in Proposition~\ref{prop:ub} with the
following choice of the parameters:
\begin{equation}\label{eq:ub-ell}\left\{
\begin{array}{l}
0<\rho<\frac1{8}\,,\quad \ell=
\epsilon_1^{1/8}\kappa^{-3/4}\check{\lambda}^{-1/2}\,,\quad
\delta=\epsilon_1^{7/8}|\ln\kappa|\, \kappa^{-1/4}\check{\lambda}^{-1/2}\,,\\ \epsilon_1=
\kappa^{-2/7}|\ln\kappa|^{-8/7}\,.
\end{array}
\right.
\end{equation}
Clearly, $\epsilon_1$ satisfies,
$$\epsilon_1\ll1\quad{\rm as ~}\kappa\to\infty\,.$$
Recall that $\check{\lambda}$ (introduced in \eqref{deflambda})
satisfies
$$ \kappa^{-1/2}\ll\check{\lambda}\lesssim 1\,.$$
In that way, the parameters $\delta$ and $\ell$ satisfy,
\begin{align*}
&\delta\ll\epsilon_1^{7/8}|\ln\kappa\, |\kappa^{1/4}=1\,,\\
&\ell\ll\epsilon^{1/8}\kappa^{-3/4}\kappa^{1/2}=\epsilon^{1/8}\kappa^{-1/4}\ll1\,.
\end{align*}
Let us show that the three conditions in \eqref{condiprop} are
satisfied. We rewrite
$$
\kappa\ell=\epsilon_1^{1/8}\kappa^{1/4}\check\lambda^{-1/2}=\kappa^{3/14}|\ln\kappa|^{-1/7}\check\lambda^{-1/2}
=\kappa^{\varsigma/14}|\ln\kappa|^{-1/7}\kappa^{(3-\varsigma)/14}\check\lambda^{-1/2}\,,
$$
with $\varsigma:=\frac{1-8\rho}{3(1-\rho)}\,$.\\ Thanks to the
condition $0<\rho<\frac18$, we have $0<\varsigma<1\,$.\\
The conditions in \eqref{condiprop} will follow from
$$
\kappa \ell \gg \max \{ \check \lambda^{\frac{1-\rho}{2 \rho}}, \check \lambda^{-\frac{4-\rho}{3 (1-\rho)}}, \check \lambda^{-\frac 13}\}\,.
$$
We first observe
$$
\kappa \ell
\gg\check\lambda^{-(3-\varsigma)/7}\check\lambda^{-1/2}=\check\lambda^{-\frac{37-23\rho}{14\times
3(1-\rho)}}\gtrsim 1\,.$$ Then, we deduce,
\begin{align*}
&\kappa\ell\,\check\lambda^{\frac{4-\rho}{3(1-\rho)}}\gg
\check\lambda^{\frac{-29+21\rho}{14\times 3(1-\rho)}}\gtrsim 1\,,\\
&\kappa\ell\,\check\lambda^{1/3}\gg \check\lambda^{\frac{-23+9\rho}{14\times3(1-\rho)}}\gtrsim 1\,,
\end{align*}
and we can apply Proposition~\ref{prop:ub}.

Now, the remainder $a$ in Proposition~\ref{prop:ub}
satisfies\footnote{We recall that $ \kappa \check{\lambda}^{-1}$ is
of the order of $\kappa^3/H$.}
$$a\ll \kappa \check{\lambda}^{-1}\,.
$$
Notice indeed that,
\begin{align*}
&\delta^{-1}\kappa^2H^2\ell^7\, {\approx}\, |\ln\kappa|^{-1}\kappa\check\lambda^{-1}\ll\kappa\check\lambda^{-1}\,,\\
&\delta\kappa^2\ell=\epsilon_1|\ln\kappa|\kappa\check\lambda^{-1}\ll\kappa\check\lambda^{-1}\,,\\
&|\ln(\check\lambda\kappa\ell)|^2\kappa^\rho\ell^{\rho-1}=|\ln(\check\lambda\kappa\ell)|^2({\epsilon_1^\frac 18 } \kappa^{1/4})^{\rho-1}\check\lambda^{\frac{1-3\rho}2}\kappa\check\lambda^{-1}
\lesssim|\ln(\check\lambda\kappa\ell)|^2({\epsilon_1^{\frac 18}} \kappa^{1/4})^{\rho-1}\kappa\check\lambda^{-1}\ll\kappa\check\lambda^{-1}\,,\\
&\check\lambda^{-5/9}\kappa^{1/3}\ell^{-2/3}=\check\lambda^{7/9}\kappa^{-\frac{5}{6\times7}}|\ln\kappa|^{\frac2{3\times
7}}\kappa\check\lambda^{-1}\ll\kappa\check\lambda^{-1}\,.
\end{align*}
The last point to verify is that
$$\check{\lambda}^{\frac{2(1-\rho)}3}\kappa^{-2\rho}\,\ell^{-2\rho}\ll1\,,$$
which follows easily since we know that
$$1-\rho>0\,,\,\check\lambda\lesssim1 \mbox{ and  and
}\kappa\ell\gg1\,.$$ Now sending $\kappa\to\infty$, the upper bound
in Proposition~\ref{prop:ub} becomes,
$$\limsup_{\kappa\to\infty}\frac{H}{\kappa^3}\Big\{\E -2\kappa\left(\int_{\Gamma}\left(|\nabla
B_0(x)|\frac{H}{\kappa^2}\right)^{1/3}\,E\left(|\nabla
B_0(x)|\frac{H}{\kappa^2}\right)\,ds(x)\right)\Big\}\leq C\eta\,.$$
Since this is true for all $\eta\in(0,1/2)$, we get by sending
$\eta\to0_+\,$,
$$\limsup_{\kappa\to\infty}\frac{H}{\kappa^3}\Big\{\E -2\kappa\left(\int_{\Gamma}\left(|\nabla
B_0(x)|\frac{H}{\kappa^2}\right)^{1/3}\,E\left(|\nabla
B_0(x)|\frac{H}{\kappa^2}\right)\,ds(x)\right)\Big\}\leq0\,,$$ and
the conclusion in Theorem~\ref{thm:up} follows.
\end{proof}

In the next theorem, we derive an upper bound of the ground state
energy in \eqref{eq-gse} valid in the regime $\kappa^{-1}\ll
H\lesssim\kappa^{3/2}$.

\begin{thm}\label{thm:up*}
Let $\Lambda>0$ and $\epsilon:\R+\to\R_+$ be a function satisfying
$\displaystyle\lim_{\kappa\to\infty}\epsilon(\kappa)=0$ and
$\displaystyle\lim_{\kappa\to\infty}\kappa\epsilon(\kappa)=\infty$.

If $\epsilon(\kappa)\kappa\leq H\leq \Lambda\kappa^{3/2}$, then the
ground state energy in \eqref{eq-gse} satisfies,
\begin{equation}\label{eq:ub2}
\E \leq \kappa^2\int_\Omega
g\left(\frac{H}{\kappa}|B_0(x)|\right)\,dx+
\frac{\kappa^3}{H}\,o(1)\,,
\end{equation}
where $g(\cdot)$ is the function introduced in \eqref{eq:g}.
\end{thm}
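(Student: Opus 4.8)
The plan is to build an explicit test configuration $(w,\Fb)$: taking $\Ab=\Fb$ kills the magnetic term in \eqref{eq-3D-GLf}, so that $\mathcal E(w,\Fb)=\mathcal E_0(w,\Fb;\Omega)$, and one estimates this from above by tiling $\Omega$ into small squares on which superconductivity is modelled by the function $g$. I would cover $\R^2$ by closed squares $\overline{Q_{\ell,j}}$ of side $\ell$ centred at points $a_j$, with $\ell=\ell(\kappa)$ and an auxiliary $\delta=\delta(\kappa)$ fixed at the very end, and call a square \emph{active} if $\overline{D(a_j,\ell)}\subset\Omega$ and $\tfrac{1}{\kappa H\ell^2}\le|B_0(a_j)|<\tfrac{\kappa}{H}$. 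On each non-active square set $w=0$; this throws away nothing where $|B_0(a_j)|\ge\kappa/H$ (since $g=0$ there) and only truncates an $O(\ell)$-collar of $\partial\Omega$ together with the very thin tube $\{|B_0|<(\kappa H\ell^2)^{-1}\}$ around $\Gamma$. As the minimizers $u_r\in H^1_0(Q_r)$ of the functional $F_{b,Q_r}$ in \eqref{eq:rGL} vanish on $\partial Q_r$, the glued function $w$ belongs to $H^1(\Omega)$.

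On an active square, use Proposition~\ref{prop:guage}(1) to choose $\varphi_j$ with $|\Fb-(B_0(a_j)\Ab_0(\cdot-a_j)+\nabla\varphi_j)|\le C\ell^2$ and set
$$
w(x)=e^{i\kappa H\varphi_j(x)}\,u_{r_j}\big(\sqrt{\kappa H|B_0(a_j)|}\,(x-a_j)\big),\qquad
b_j=\frac{H|B_0(a_j)|}{\kappa},\qquad r_j=\sqrt{\kappa H|B_0(a_j)|}\;\ell,
$$
with $u_{r_j}$ replaced by $\overline{u_{r_j}}$ if $B_0(a_j)<0$. Gauge invariance removes $\nabla\varphi_j$; the remaining discrepancy of magnetic potentials is $O(\kappa H\ell^2)$, which I absorb by Cauchy--Schwarz at the cost of $\delta\int_{Q_{\ell,j}}|(\nabla-i\kappa HB_0(a_j)\Ab_0)w|^2+C\delta^{-1}(\kappa H\ell^2)^2\int_{Q_{\ell,j}}|w|^2$; here $|w|\le1$, while rescaling and testing the equation for $u_{r_j}$ give $\int_{Q_{\ell,j}}|(\nabla-i\kappa HB_0(a_j)\Ab_0)w|^2=\int_{Q_{r_j}}|(\nabla-i\Ab_0)u_{r_j}|^2\le b_j^{-1}r_j^2=\kappa^2\ell^2$. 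The change of variables $x\mapsto\sqrt{\kappa H|B_0(a_j)|}\,(x-a_j)$ turns the frozen-field energy into $b_j^{-1}F_{b_j,Q_{r_j}}(u_{r_j})=b_j^{-1}e_D(b_j,r_j)$, so, since $r_j\ge1$ by the lower cutoff on $|B_0(a_j)|$, \eqref{eq:g'} yields
$$
\mathcal E_0(w,\Fb;Q_{\ell,j})\le \kappa^2 g(b_j)\,\ell^2+C\kappa\,\ell+C\big(\delta\kappa^2+\delta^{-1}\kappa^2H^2\ell^4\big)\ell^2.
$$

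Summing over all squares (the non-active ones contribute $0$), the leading terms give $\kappa^2\sum_{j\ \mathrm{active}}g(b_j)\,\ell^2$, which, by continuity of $g$, the identity $g(b)=0$ for $b\ge1$, and the tube bound $|\{|B_0|<t\}|\le C\min(t,1)$ (from the non-degeneracy \eqref{eq:B0}), is a Riemann sum for $\kappa^2\int_\Omega g\big(\tfrac{H}{\kappa}|B_0(x)|\big)\,dx$ up to an error I expect to control by $\tfrac{\kappa^3}{H}o(1)$. Since the number of active squares is $\lesssim\min(1,\kappa/H)\,\ell^{-2}$, the accumulated $C\kappa\ell$ terms amount to $\lesssim\kappa\min(1,\kappa/H)\,\ell^{-1}$, the accumulated gauge errors to $\lesssim(\delta\kappa^2+\delta^{-1}\kappa^2H^2\ell^4)\min(1,\kappa/H)$, and the energy discarded inside the tube around $\Gamma$ (where $|g|\le\tfrac12$) is $\lesssim\kappa^2(\kappa H\ell^2)^{-1}$. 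I would then verify that each of these, together with the Riemann-sum error, is $\tfrac{\kappa^3}{H}o(1)$ after choosing $\delta=o(1)$ and $\ell$ with
$$
\kappa^{-1}\ll\ell\ll\min\big(\kappa/H,\ \ell_0,\ \delta^{1/4}H^{-1/2}\min(1,\kappa/H)^{1/4}\big),
$$
a window which is non-empty precisely because $H\le\Lambda\kappa^{3/2}$ (it is wide for $H\ll\kappa$ and shrinks, but never collapses, as $H\uparrow\Lambda\kappa^{3/2}$). Since $\E\le\mathcal E(w,\Fb)$, this gives \eqref{eq:ub2}.

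The delicate point, hence the main obstacle, is precisely this balancing of $\ell$ and $\delta$: freezing $B_0$ to the constant $B_0(a_j)$ on each square produces the $O(\kappa H\ell^2)$ error in the magnetic potential, which pushes $\ell$ down, whereas keeping both the Riemann-sum error and the accumulated $C\kappa\ell$ terms of order $o(\kappa^3/H)$ pushes $\ell$ up past $\kappa^{-1}$; moreover the two truncations must be carried with the correct sign, so that discarding the (negative) energy in the thin tube around $\Gamma$ costs only $\tfrac{\kappa^3}{H}o(1)$ and does not invalidate the inequality. Checking that all these constraints are simultaneously met for every $H$ in the range $\epsilon(\kappa)\kappa\le H\le\Lambda\kappa^{3/2}$ is the bookkeeping core of the argument; the rest is a routine adaptation of the constant-field constructions of \cite{SS02,FK-cpde,Att}.
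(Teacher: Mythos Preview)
Your proposal is correct and follows essentially the same strategy as the paper: tile $\Omega$ by small squares away from $\Gamma$, freeze $B_0$ on each, gauge away the constant part via Proposition~\ref{prop:guage}(1), rescale to the model functional $F_{b,Q_r}$, invoke \eqref{eq:g'}, and sum. The only differences are cosmetic: the paper commits upfront to the specific scales $\zeta=\kappa^{-1/16}H^{-1/2}$ and Cauchy--Schwarz parameter $\kappa^{-1/16}$ (and cuts off near $\Gamma$ at distance $M\zeta$ rather than via your condition $|B_0(a_j)|\ge(\kappa H\ell^2)^{-1}$), whereas you leave $\ell,\delta$ free and exhibit the admissible window at the end; both lead to the same balance of errors.
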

\begin{proof}
Here, we construct a test function as in \eqref{eq:fw} below. Let
$\zeta=\kappa^{-1/16} H^{-1/2}$ and $(\mathcal Q_{k,\zeta})$ be the
lattice of squares generated by the square
$$\mathcal Q_\zeta=\left(\,-\frac\zeta2,\frac\zeta2\,\right)\times
\left(\,-\frac\zeta2,\frac\zeta2\,\right)\,.$$ Notice that $\zeta$ satisfies $\zeta\ll\frac{\kappa}{H}\ll1$. Define
$$\mathcal I=\big\{k~:~\mathcal
Q_{k,\zeta}\subset\{{\rm dist}(x,\Gamma)\leq
M\frac{\kappa}{H}\}\setminus\big(\bigcup_{j\in\mathcal
J}D(a_j,\ell)\big)\quad{\rm and}\quad{\rm dist}
(Q_{k,\zeta},\Gamma)\geq M\zeta\big\}\,,$$ where $M>0$ is a constant
selected sufficiently large so that, if ${\rm
dist}(x,\Gamma)>M\frac{\kappa}{H}$, then $|B_0(x)|\geq \frac\kappa
H$. Notice that, since $B_0$ vanishes non-degenerately on $\Gamma$,
if $k\in\mathcal I$, then  $$ |B_0(x)|\geq M'\zeta>0 \mbox{ in }  \overline
{\mathcal Q_{k,\zeta}}\,.$$
For all $k\in \mathcal I$, let $a_k$ be the center of the square
$\mathcal Q_{k,\zeta}$ and select an arbitrary point
$x_k\in\overline{\mathcal Q_{k,\zeta}}$.

If $r>0$ and $b>0$, let $u_r\in H^1_0(\mathcal Q_{r})$ be the
minimizer of the ground state energy $F_{b,\mathcal Q_r}$ introduced
in \eqref{eq:eD}. For all $k\in\mathcal I$, let
$r_k=\zeta\sqrt{\kappa H |B_0(x_k)|}$, $b_k=\frac
H\kappa|B_0(x_k)|$, $u_k=u_{r_k}$ and $\varphi_k$ be the gauge
function satisfying (see Proposition~\ref{prop:guage}),
$$|\Fb(x)-\big(B_0(x_k)\Ab_0(x-a_k)+\nabla\varphi_k\big)|\leq
C\zeta^2\,,\quad{\rm in~}\overline{\mathcal Q_{k,\zeta}}\,.$$
Define the test function $v$ as follows,
\begin{equation}\label{eq:fv}
v(x)=\left\{\begin{array}{ll}
e^{-i\phi_k(x)}\,u_{k}\big(\frac{r_k}{\zeta}\,(x-a_k)\big)&{\rm if}~x\in\mathcal Q_{k,\zeta}\subset\{B_0(x)>0\}\,,\\
e^{-i\phi_j(x)}\,\overline{u_{k}}\big(\frac{r_k}{\zeta}\,(x-a_k)\big)&{\rm if}~x\in\mathcal Q_{k,\zeta}\subset\{B_0(x)<0\}\,,\\
0&{\rm otherwise\,.}
\end{array}
\right.
\end{equation}
We outline the computation of $\mathcal E_0(v,\Fb)$. The details of
the computations are given in \cite{Att}. In every square $\mathcal
Q_{k,\zeta}$, we have,
$$\mathcal E_0(v,\Fb;\mathcal Q_{k,\zeta})\leq (1+\kappa^{-1/16})\,\frac{F_{b_k,\mathcal
Q_{r_k}}(u_k)}{b_k}+\kappa^{1/16}\kappa^2H^2\zeta^6\,.$$ Thanks to
the assumption on $H$ and the definition of
$\zeta=\kappa^{-1/16}H^{-1/2}$, we have $r_k\gg 1$. Thus, we may use
 \eqref{eq:g'} and write,
\begin{align*}\mathcal E_0(v,\Fb;\mathcal Q_{k,\zeta})&\leq
(1+\kappa^{-1/16})\,\frac{r_k^2}{b_k}\left(g(b_k)+C\frac{\sqrt{b_k}}{r_k}\right)+\kappa^{1/16}\kappa^2H^2\zeta^6\\
&=(1+\kappa^{-1/16})\zeta^2\kappa^2\left(g\left(\frac H\kappa |B_0(x_k)|\right)+C\frac1{\zeta\kappa}\right)+\kappa^{1/16}\kappa^2H^2\zeta^6\,.
\end{align*}
We sum over $k$ and select the points $x_k$ as follows
$$|B_0(x_k)|=\min\{|B_0(x)|~:~x\in\overline{\mathcal
Q_{k,\zeta}}\,\}\,.$$ In that way, we obtain,
\begin{align*}
\mathcal E_0(v,\Fb)&=\sum_{k\in\mathcal I}\mathcal E_0(v,\Fb;\mathcal Q_{k,\zeta})\\
&\leq (1+\kappa^{-1/16})\kappa^2
\int_{\bigcup_{k\in\mathcal I}\overline{ Q_{k,\zeta}}}
\left(g\left(\frac H\kappa |B_0(x)|\right)+C\frac1{\zeta\kappa}+\kappa^{1/16}H^2\zeta^4\right)\,dx\\
&\leq (1+\kappa^{-1/16})\kappa^2\int_{\bigcup_{k\in\mathcal I}\overline{ Q_{k,\zeta}}} g\left(\frac H\kappa
|B_0(x)|\right)\,dx+C\kappa^{1/16}\frac{\kappa^2}{H\zeta}+M\kappa^{-1/16}\frac{\kappa^3}H\,.
\end{align*}
Notice that, since $g(b)=0$ for $b\geq 1$ and $B_0$ vanishes
non-degenerately on $\Gamma$, then
$$
\int_{\bigcup_{k\in\mathcal I}\overline{ Q_{k,\zeta}}}
g\left(\frac H\kappa |B_0(x)|\right)\,dx\leq \int_{\Omega}
g\left(\frac H\kappa |B_0(x)|\right)\,dx +\frac{\kappa}{H}\,o(1)\,.
$$
Thus,
$$\mathcal E_0(v,\Fb)\leq (1+\kappa^{-1/16})\kappa^2\int_{\Omega} g\left(\frac H\kappa
|B_0(x)|\right)\,dx+C\kappa^{1/16}\frac{\kappa^2}{H\zeta}+M\kappa^{-1/16}\frac{\kappa^3}H+\frac{\kappa^3}{H}\,o(1)\,.$$
Since $H\lesssim \kappa^{3/2}$, then
$$\kappa^{1/16}\frac{\kappa^2}{H\zeta}=\kappa^{1/8}\frac{H^{1/2}}{\kappa}\frac{\kappa^3}{H}\ll1$$
and
$$\mathcal E_0(v,\Fb)\leq (1+\kappa^{-1/16})\kappa^2\int_{\Omega} g\left(\frac H\kappa
|B_0(x)|\right)\,dx+\frac{\kappa^3}{H}\,o(1)\,.$$
Since $\E\leq \mathcal E(v,\Fb)=\mathcal E_0(v,\Fb)$, then we get
the upper bound in \eqref{eq:ub2}.
\end{proof}
\section{Exponential decay of the order parameter}\label{s6}

The aim of this section is to prove that the order parameter $\psi$  is
exponentially small (in the $L^2$-norm) away from the points where
the magnetic field vanishes. This bound is needed in
Section \ref{s7} to obtain a lower bound of the ground state energy in
\eqref{eq-gse}.

\subsection{A rough bound}
In this subsection we give a rough bound valid for any order parameter $\psi$.
\begin{thm}\label{thm:rb}
Let $\Lambda>0$ and $\epsilon:\R\to\R_+$ such that
$\displaystyle\lim_{\kappa\to
\infty}\kappa\,\epsilon(\kappa)=\infty$
 and $\displaystyle\lim_{\kappa\to \infty}\epsilon(\kappa)=0~$. There exist constants $C$ and
$\kappa_0$ such that, if
$(\psi,\Ab)$ is a critical point of the functional in
\eqref{eq-3D-GLf}, $\kappa\geq\kappa_0$ and
\begin{equation}\label{hypH}
\epsilon(\kappa)\kappa^2 \leq H\leq \Lambda\kappa^2\,,
\end{equation}
 then
\begin{equation}\label{l2est}
\|\psi\|_2\leq C\left(\frac{\kappa}{H}\right)^{1/6}\,,
\end{equation}
\begin{equation}\label{l2curl} \|\curl (\Ab-\Fb)\|_2\leq \frac C H
\left(\frac{\kappa}{H}\right)^{1/6}\,,
\end{equation}
and
\begin{equation}\label{l2est:grad}
\|(\nabla-i\kappa H\Ab)\psi\|_2\leq  C \kappa  \left(\frac{\kappa}{H}\right)^{1/6}\,.
\end{equation}
\end{thm}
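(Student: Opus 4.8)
The plan is to start from the virial identity of \eqref{eq:GLen}, obtained by testing the first Ginzburg--Landau equation \eqref{eq:GL} against $\overline\psi$ and integrating by parts with the Neumann condition,
\begin{equation*}
\int_\Omega|(\nabla-i\kappa H\Ab)\psi|^2\,dx+\kappa^2\int_\Omega|\psi|^4\,dx=\kappa^2\int_\Omega|\psi|^2\,dx\,,
\end{equation*}
which in particular gives $\int_\Omega|(\nabla-i\kappa H\Ab)\psi|^2\,dx\le\kappa^2\|\psi\|_2^2$ (this is \eqref{eq:grad<kappa}). The idea is that on the part of $\Omega$ where $|B_0|$ is not too small the magnetic term produces a spectral gap much larger than $\kappa^2$, forcing $\psi$ to be small there, while the remaining thin tube around $\Gamma$ is controlled by its area together with $\|\psi\|_\infty\le1$ from \eqref{eq:psi<1}.

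Fix the threshold $t=(\kappa/H)^{1/3}$ (this choice is not optimal, but it suffices and keeps the bookkeeping simple); since $\kappa\ll H\le\Lambda\kappa^2$ we have $t\ll1$. Because $B_0$ vanishes non-degenerately on $\Gamma$, so that $|B_0(x)|\gtrsim\dist(x,\Gamma)$ near $\Gamma$, while $|B_0|\ge c_0>0$ away from $\Gamma$, and $\Gamma$ is a finite union of smooth curves, the set $\{x\in\overline\Omega:|B_0(x)|<2t\}$ has Lebesgue measure $O(t)$, hence $\int_{\{|B_0|<2t\}}|\psi|^2\,dx\le Ct=C(\kappa/H)^{1/3}$. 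Pick a quadratic partition of unity $\chi_0^2+\chi_1^2=1$ on $\overline\Omega$ with $\supp\chi_0\subset\{|B_0|<2t\}$, $\supp\chi_1\subset\{|B_0|>t\}$ and $|\nabla\chi_0|^2+|\nabla\chi_1|^2\le C/t^2$. By the IMS localisation formula, discarding the non-negative $\chi_0$-term,
\begin{equation*}
\int_\Omega|(\nabla-i\kappa H\Ab)\psi|^2\,dx\ge\int_\Omega|(\nabla-i\kappa H\Ab)(\chi_1\psi)|^2\,dx-\frac{C}{t^2}\|\psi\|_2^2\,.
\end{equation*}

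For the surviving term I would first replace $\Ab$ by $\Fb$. By Proposition~\ref{prop:A-F} together with \eqref{eq:curl}, \eqref{eq:grad<kappa} and \eqref{eq:psi<1} one has $\|\Ab-\Fb\|_{C^{0,\alpha}(\overline\Omega)}\le\frac{C}{H}\|\psi\|_2$, so, using the crude a priori bound $\|\psi\|_2^2\le|\Omega|$ (which rules out any circularity),
\begin{align*}
\int_\Omega|(\nabla-i\kappa H\Ab)(\chi_1\psi)|^2\,dx
&\ge\tfrac12\int_\Omega|(\nabla-i\kappa H\Fb)(\chi_1\psi)|^2\,dx-\kappa^2H^2\|\Ab-\Fb\|_\infty^2\,\|\chi_1\psi\|_2^2\\
&\ge\tfrac12\int_\Omega|(\nabla-i\kappa H\Fb)(\chi_1\psi)|^2\,dx-C\kappa^2\|\chi_1\psi\|_2^2\,.
\end{align*}
On $\supp\chi_1$ the true magnetic field satisfies $|\curl(\kappa H\Fb)|=\kappa H|B_0|\ge\kappa Ht$, so the standard lower bound for the magnetic Neumann Laplacian (only the de Gennes constant $\Theta_0>0$ enters near $\partial\Omega$; cf. \eqref{a2}, \eqref{a5}) gives $\int_\Omega|(\nabla-i\kappa H\Fb)(\chi_1\psi)|^2\,dx\ge c\,\kappa Ht\,\|\chi_1\psi\|_2^2$ for $\kappa$ large. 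Since $\kappa Ht=\kappa^{4/3}H^{2/3}\gg\kappa^2$, the term $C\kappa^2\|\chi_1\psi\|_2^2$ is absorbed, and combining the displays above with the virial identity and $t^{-2}=(H/\kappa)^{2/3}\le(\Lambda\kappa)^{2/3}\ll\kappa^2$ gives
\begin{equation*}
c\,\kappa Ht\,\|\chi_1\psi\|_2^2\le C\kappa^2\|\psi\|_2^2\,,\qquad\text{hence}\qquad\|\chi_1\psi\|_2^2\le C(\kappa/H)^{2/3}\|\psi\|_2^2\,.
\end{equation*}
Therefore $\|\psi\|_2^2=\|\chi_0\psi\|_2^2+\|\chi_1\psi\|_2^2\le C(\kappa/H)^{1/3}+C(\kappa/H)^{2/3}\|\psi\|_2^2$, and since $(\kappa/H)^{2/3}\to0$ the last term is absorbed for $\kappa$ large, giving $\|\psi\|_2^2\le C(\kappa/H)^{1/3}$, i.e.\ \eqref{l2est}. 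Finally \eqref{l2curl} is immediate from \eqref{eq:curl} and \eqref{l2est}, and \eqref{l2est:grad} from \eqref{eq:grad<kappa} and \eqref{l2est}.

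The main obstacle is the magnetic lower bound on $\{|B_0|>t\}$: it has to be carried out with care near $\partial\Omega$ (Neumann condition, de Gennes constant) and especially near the finitely many points of $\Gamma\cap\partial\Omega$, where the confinement is weakest and the localisation region has a corner; since that set is finite the extra contribution is of lower order and can be lumped into the area-controlled part. The other sensitive point is controlling $\curl\Ab-B_0$, which is exactly where Propositions~\ref{prop:FH-b}--\ref{prop:A-F} are used; this also explains the role of the hypotheses, with $H\lesssim\kappa^2$ keeping $\|\Ab-\Fb\|_\infty$ small and $H\gg\kappa$ ensuring $\kappa Ht\gg\kappa^2$, so that the confinement really beats the right-hand side of the virial identity.
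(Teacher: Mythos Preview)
Your strategy is essentially the paper's: test the first Ginzburg--Landau equation against $\overline\psi$, localise with a cutoff at distance $\ell=(\kappa/H)^{1/3}$ from $\Gamma$, use magnetic confinement where $|B_0|\gtrsim\ell$ to beat $\kappa^2$, and absorb the thin tube via $\|\psi\|_\infty\le1$ and its $O(\ell)$ area. The choice of scale and the final absorption argument match.

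There are two technical differences worth noting. First, the paper also cuts off in a strip $\{\dist(x,\partial\Omega)<\ell\}$ near the boundary, so that the localised function is compactly supported in $\Omega$. This allows the elementary inequality $\int|(\nabla-i\kappa H\Ab)u|^2\ge\kappa H\int|\curl\Ab|\,|u|^2$ for $u\in C_c^\infty(\Omega)$, and the boundary strip (area $O(\ell)$) is absorbed exactly like the tube around $\Gamma$. You instead keep the boundary and invoke a Neumann-type spectral lower bound with the de~Gennes constant; that is correct but, as you yourself flag, requires a further localisation near $\partial\Omega$ and comparison with half-plane models (this is precisely what the paper does later, in the exponential-decay proof, but \emph{not} here). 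Cutting off near $\partial\Omega$ is simpler and avoids the issue entirely. Second, you pass from $\Ab$ to $\Fb$ via the $L^\infty$ bound from Proposition~\ref{prop:A-F}; the paper instead keeps $\Ab$ and decomposes $\curl\Ab=\curl\Fb+\curl(\Ab-\Fb)$, controlling the error by Cauchy--Schwarz and the $L^2$ bound $\|\curl(\Ab-\Fb)\|_2\le C\|\psi\|_2/H$. Both routes work; the paper's is slightly more self-contained since it uses only \eqref{eq:curl} and not the elliptic regularity behind Proposition~\ref{prop:A-F}.
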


An important ingredient in the proof of this theorem is:
\begin{prop}\label{prop:lb-blk*}
Let $\Lambda>0$ and $\epsilon:\R\to\R_+$ such that
$\displaystyle\lim_{\kappa\to \infty}\kappa\,\epsilon(\kappa)=
\infty$
 and $\displaystyle\lim_{\kappa\to \infty}\epsilon(\kappa)=0~$. There exist positive constants $C$, $\ell_0$ and
$\kappa_0$ such that the following is true:\\
For  $\ell\in\,(0,\ell_0)\,$, $a\in\,(0,1]$ and $h\in C_c^\infty(\Omega)$
such that
$$
{\rm supp}\,h\subset \{x\in{\Omega}~:~{\rm
dist}(x,\partial\Omega)> \ell~\&~{\rm dist}(x,\Gamma)>
\sqrt{a}\,\ell\}{\rm ~and~}
\|h\|_\infty\leq 1\,,$$ if
$(\psi,\Ab)$ is a critical point of the functional in
\eqref{eq-3D-GLf}, $\kappa\geq\kappa_0$ and
$\epsilon(\kappa)\kappa^2 \leq H\leq \Lambda\kappa^2$,  then
\begin{equation}\label{blk1}
\int_\Omega|(\nabla-i\kappa H\Ab)\psi|^2\,dx\geq {\frac 1C}
\kappa\big(H\sqrt{a}\,\ell-{C^2}\big)\int_\Omega|h\psi|^2\,dx - C\kappa\int_\Omega(1-h^2)|\psi|^2\,dx\,.
\end{equation}
\end{prop}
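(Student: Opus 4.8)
\emph{Strategy and geometry.} The plan is to localize the magnetic Dirichlet integral to $\supp h$, where the effective field strength $\kappa H|B_0|$ is comparable to $\kappa H\sqrt a\,\ell$, and then to invoke the standard lower bound for the magnetic quadratic form in terms of the field (the inequality used in \eqref{eq:lb}, i.e.\ Lemma~1.4.1 of \cite{FH-b}). Since $\Gamma$ is a finite union of smooth curves and $B_0$ vanishes non-degenerately on it by \eqref{eq:B0}, there are $\ell_0>0$ and $c_0>0$, depending only on $\Omega$ and $B_0$, such that
\[
\dist(x,\Gamma)\ge\sqrt a\,\ell\ \Longrightarrow\ |B_0(x)|\ge c_0\sqrt a\,\ell\qquad(\ell\in(0,\ell_0),\ a\in(0,1])\,;
\]
hence $|B_0|\ge c_0\sqrt a\,\ell$ on $\supp h$. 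Moreover $\{\dist(\cdot,\Gamma)>\sqrt a\,\ell\}\cap\Omega$ splits into components on each of which $B_0$ has a fixed sign, and $\supp h$ stays at positive distance from $\Gamma$; so I write $h=h_++h_-$, where $h_\pm\in C_c^\infty(\Omega)$ is the part of $h$ supported where $\pm B_0>0$, the supports of $h_\pm$ are mutually separated, and $h_+^2+h_-^2=h^2$.

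\emph{Localization.} Put $q(v)=\int_\Omega|(\nabla-i\kappa H\Ab)v|^2\,dx$. The energy identity $q(\psi)=\kappa^2\int_\Omega(1-|\psi|^2)|\psi|^2\,dx$ (from \eqref{eq:GLen}), together with the identity $q(h\psi)=\int_\Omega|\nabla h|^2|\psi|^2\,dx+\kappa^2\int_\Omega h^2(1-|\psi|^2)|\psi|^2\,dx$ — obtained by multiplying \eqref{eq:GL} by $\overline{h^2\psi}$ and integrating by parts, using $h^2\psi\in H^1_0(\Omega)$ — gives, since $0\le h\le1$ and $|\psi|\le1$,
\[
q(\psi)\ \ge\ \kappa^2\int_\Omega h^2(1-|\psi|^2)|\psi|^2\,dx\ =\ q(h\psi)-\int_\Omega|\nabla h|^2|\psi|^2\,dx\,.
\]
For the admissible cut-offs the last term is $\le C\kappa\int_\Omega(1-h^2)|\psi|^2\,dx$, its gradient being supported on $\{1-h^2>0\}$ and $\|\psi\|_\infty\le1$ by \eqref{eq:psi<1}; and by disjointness of the supports, $q(h\psi)=q(h_+\psi)+q(h_-\psi)$. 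It remains to bound $q(h_+\psi)+q(h_-\psi)$ from below.

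\emph{Magnetic lower bound.} Apply Lemma~1.4.1 of \cite{FH-b} to $h_\pm\psi\in H^1_0(\Omega)$ with potential $\kappa H\Ab$, for which the relevant field is $\kappa H\curl\Ab=\kappa H(B_0+\curl(\Ab-\Fb))$, choosing on $\supp h_\pm$ the sign for which the inequality reads $q(h_+\psi)\ge\kappa H\int\curl\Ab\,|h_+\psi|^2\,dx$ and $q(h_-\psi)\ge-\kappa H\int\curl\Ab\,|h_-\psi|^2\,dx$. Summing, and using that $B_0$ has sign $\pm$ on $\supp h_\pm$, the $B_0$-contributions combine into $\kappa H\int_\Omega|B_0|\,|h\psi|^2\,dx\ge c_0\,\kappa H\sqrt a\,\ell\int_\Omega|h\psi|^2\,dx$, while the remainder is $\le\kappa H\,\|\curl(\Ab-\Fb)\|_2\,\big\||h\psi|^2\big\|_2$. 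By \eqref{eq:curl}, $\|h\psi\|_\infty\le1$ (so $\big\||h\psi|^2\big\|_2\le\|h\psi\|_2$), and $\|\psi\|_2^2=\|h\psi\|_2^2+\int_\Omega(1-h^2)|\psi|^2\,dx$, this remainder is $\le C\kappa\,\|\psi\|_2\|h\psi\|_2\le C\kappa\int_\Omega|h\psi|^2\,dx+C\kappa\int_\Omega(1-h^2)|\psi|^2\,dx$. Combining everything and enlarging $C$ yields
\[
q(\psi)\ \ge\ \tfrac1C\,\kappa\big(H\sqrt a\,\ell-C^2\big)\int_\Omega|h\psi|^2\,dx-C\kappa\int_\Omega(1-h^2)|\psi|^2\,dx\,,
\]
which is \eqref{blk1}.

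\emph{Main difficulty.} The crux is the magnetic lower bound: $\curl\Ab$ changes sign in every neighbourhood of $\Gamma$, so Lemma~1.4.1 cannot be applied globally; this is exactly why $\supp h$ must avoid the $\sqrt a\,\ell$-collar of $\Gamma$ and why $h$ is split along $\{B_0>0\}$ and $\{B_0<0\}$, the lemma being used with the locally correct sign on each piece. The second sensitive point is that the discrepancy $\curl(\Ab-\Fb)$ is controlled only in $L^2$ by \eqref{eq:curl} (whose proof uses \eqref{eq:grad<kappa} and $\|\psi\|_\infty\le1$), not pointwise, so the field error must be absorbed by an $L^2$ Cauchy--Schwarz estimate — this is precisely the source of the loss $-C^2\kappa\int_\Omega|h\psi|^2$ inside the principal term and of the second term $-C\kappa\int_\Omega(1-h^2)|\psi|^2$. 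One must also fix $\ell_0$ small enough that the linearization of $B_0$ near $\Gamma$ used in the first step holds uniformly, and that the cut-offs entering the localization have gradients controlled by $\kappa^{1/2}$ on $\{1-h^2>0\}$.
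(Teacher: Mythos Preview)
Your magnetic lower bound and the treatment of the field error $\curl(\Ab-\Fb)$ via \eqref{eq:curl} are exactly the paper's argument. Your explicit splitting $h=h_++h_-$ along $\{B_0>0\}$ and $\{B_0<0\}$ is in fact a more careful rendering of what the paper invokes in one line as the ``celebrated inequality'' $\int|(\nabla-i\kappa H\Ab)h\psi|^2\ge\kappa H\int|\curl\Ab|\,|h\psi|^2$; that form indeed requires the sign splitting, which the paper leaves implicit.

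The gap is in your localization step. You assert $\int_\Omega|\nabla h|^2|\psi|^2\,dx\le C\kappa\int_\Omega(1-h^2)|\psi|^2\,dx$, but the hypotheses place no bound on $\nabla h$: only the support condition and $\|h\|_\infty\le1$ are assumed, and the constants $C,\ell_0,\kappa_0$ are fixed \emph{before} $h$ is chosen. Your remark that the cut-offs ``have gradients controlled by $\kappa^{1/2}$'' imports an assumption that is not there. Worse, with $\int_\Omega|(\nabla-i\kappa H\Ab)\psi|^2\,dx$ on the left the inequality \eqref{blk1} cannot hold for all admissible $h$: your own identity $q(h\psi)=\int|\nabla h|^2|\psi|^2+\kappa^2\int h^2(1-|\psi|^2)|\psi|^2$ shows that sharpening $h$ drives $q(h\psi)$ to infinity while $q(\psi)$ and the stated right-hand side stay fixed, so no uniform comparison $q(\psi)\ge q(h\psi)-C\kappa\int(1-h^2)|\psi|^2$ is possible.

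The resolution is that the left-hand side of \eqref{blk1} is meant to be $\int_\Omega|(\nabla-i\kappa H\Ab)h\psi|^2\,dx$. This is exactly what the paper's proof establishes (it starts from $h\psi$ and never passes to $\psi$), and it is the $h\psi$ version that is invoked in the proof of Theorem~\ref{thm:rb}, where a lower bound on $\frac12\int_\Omega|(\nabla-i\kappa H\Ab)h\psi|^2\,dx$ is what is needed. With that reading, your localization step is unnecessary and the rest of your argument coincides with the paper's.
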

\begin{proof}~\\
The support of the function $h\psi$ does not meet the boundary of
$\Omega$  and $\Gamma$. We can use the celebrated inequality
$$\int_{\Omega}|(\nabla-i\kappa H\Ab)h\psi|^2\,dx\geq \kappa
H\int_\Omega|\curl\Ab|\,|h\,\psi|^2\,dx\,.$$
The simple decomposition $\curl\Ab=\curl\Fb+\curl(\Ab-\Fb)$ and the triangle inequality yield,
\begin{equation}\label{eq:lb-t}
\int_{\Omega}|(\nabla-i\kappa H\Ab)h\psi|^2\,dx\geq \kappa
H\int_\Omega|\curl\Fb|\,|h\psi|^2\,dx-\kappa H\int_\Omega|\curl\Ab-\curl\Fb|\,|h\psi|^2\,dx\,.
\end{equation}
By assumption  $\nabla B_0$ does not vanish  on $\Gamma$, hence
\begin{equation}\label{eq:cond-B0}
|\curl\Fb|=|B_0(x)|\geq \frac 1 M \sqrt{a}\,\ell\quad{\rm in~}\{{\rm dist}(x,\Gamma)\geq\sqrt{a}\,\ell\}\,,
\end{equation}
for some constant $M >0\,$. \\
Thus,
\begin{equation}\label{eq:curlF}
\int_\Omega|\curl\Fb|\,|h\psi|^2\,dx\geq \frac{1}{M}\, \sqrt{a}\,\ell\int_\Omega|h\psi|^2\,dx\,.
\end{equation}
Next we use the Cauchy-Schwarz inequality and the inequality in \eqref{eq:curl} as
follows
\begin{align*}
\kappa H\, \int_\Omega|\curl\Ab-\curl\Fb|\,|h\psi|^2\,dx&\leq
 \kappa H\|\curl\Ab-\curl\Fb\|_{2}\left(\int_\Omega|h\psi|^4\,dx\right)^{1/2}\\
&\leq C\kappa\|\psi\|_2\left(\int_\Omega|h\psi|^4\,dx\right)^{1/2}\,.
\end{align*}
Since $\|\psi\|_\infty\leq1$ and $\|h\|_\infty\leq 1$, we get further,
$$\|\psi\|_2\left(\int_\Omega|h\psi|^4\,dx\right)^{1/2}\leq \int_\Omega|\psi|^2\,dx=\int_\Omega|h\psi|^2\,dx+\int_\Omega(1-h^2)|\psi|^2\,dx\,.$$
Therefore, we have,
\begin{equation}\label{eq:h=1}
\kappa H\int_\Omega|\curl\Ab-\curl\Fb|\,|h\psi|^2\,dx\leq C\kappa \int_\Omega|\psi|^2\,dx+ C \kappa\int_\Omega(1-h^2)|\psi|^2\,dx\,.\end{equation}
Inserting \eqref{eq:h=1}  and \eqref{eq:curlF} into \eqref{eq:lb-t} finishes the proof of the proposition.
\end{proof}

\begin{proof}[Proof of Theorem~\ref{thm:rb}]

Let $\ell>0$ and $\Omega_\ell=\{x\in\Omega~:~{\rm dist}(x,\partial\Omega)>\ell~\&~{\rm dist}(x,\Gamma)>\ell\}$.
Select a function $h\in C_c^\infty(\Omega)$ satisfying
$$0\leq h\leq 1{\rm ~in~}\Omega\,,\quad h=1{\rm ~in~}\Omega_{2\ell}\,,\quad h=0{~\rm in~}\Omega_\ell\,,$$
and
$$|\nabla h|\leq \frac{C}{\ell}\quad{\rm in~}\Omega\,,$$
where $C$ is a constant.

Thanks to the bound $\|\psi\|_\infty\leq 1$ and the assumptions on $h$, we have,
\begin{align}
&\int_\Omega|\psi|^2\leq \int|h\psi|^2+C\ell\,,\label{eq:1}\\
&\int_\Omega|(\nabla-i\kappa H\Ab)\psi|^2\,dx\geq \int_\Omega|h(\nabla-i\kappa H\Ab)\psi|^2\,dx\\
&\hskip1cm\geq\frac12\int_\Omega|(\nabla-i\kappa H\Ab)h\psi|^2\,dx-C\int_\Omega|\nabla h|^2\,|\psi|^2\,dx\,.
\end{align}
%
Thanks to the estimate on the gradient of $h$, we may write,
$$\frac12\int_\Omega|(\nabla-i\kappa H\Ab)h\psi|^2\,dx-
\kappa^2\int_\Omega|h\psi|^2\,dx-C\big(\ell+\ell^{-1}\big)\kappa^2 \leq \mathcal E_0(\psi,\Ab;\Omega)\leq 0\,,$$
where $\mathcal E_0(\psi,\Ab;\Omega)$ is introduced in \eqref{eq:GLen}.

Now, we use Proposition~\ref{prop:lb-blk*} with $a=1$ and get,
$$\left(\frac{\kappa}{2C}(H\ell-C^2)
-\kappa^2\right)\int_\Omega|h\psi|^2\,dx\leq C\big(\ell+\ell^{-1}\big)\kappa^2\,.$$
Selecting $\ell=(\kappa/H)^{1/3}$, we get for $\kappa$ large and $H$ satisfying \eqref{hypH}
$$\int_\Omega|h\psi|^2\,dx\leq C\left(\frac{\kappa}{H}\right)^{1/3}\,.$$
Now, thanks to \eqref{eq:1}, the first inequality \eqref{l2est} in
Theorem~\ref{thm:rb} is proved. Now, the  inequality \eqref{l2curl}
(resp.  \eqref{l2est:grad}) is simply a consequence of
\eqref{eq:curl}) (resp.  \eqref{eq:grad<kappa}).
\end{proof}
\subsection{Exponential bound}

In the next theorem, we establish that any order parameter decays
exponentially fast away from the set $\Gamma$  where the magnetic
field vanishes.

\begin{thm}\label{thm:exdec}
Let $\Lambda>0$ and $\epsilon:\R\to\R_+$ such that
$\displaystyle\lim_{\kappa\to \infty}\kappa\,\epsilon(\kappa)=
\infty$ and $ \displaystyle\lim_{\kappa\to
\infty}\epsilon(\kappa)=0\,$. There exist positive constants $C$,
$m_0$
 and $\kappa_0$ such that, if $(\psi,\Ab)$ is a
critical point of the functional in \eqref{eq-3D-GLf},
$\kappa\geq\kappa_0\,$, $\epsilon(\kappa)\kappa^2 \leq H\leq
\Lambda\kappa^2$, then
$$
\int_{\Omega} \exp\left(2m_0\frac H\kappa\,
t(x)\right)\left(\frac1{\kappa^2}|(\nabla-i\kappa
H\Ab)\psi|^2+|\psi(x)|^2\right)\,dx\leq C\, \int_{\{t(x)\leq
C\frac \kappa H\}}|\psi(x)|^2\,dx\,,
$$
where $t(x)={\rm dist}(x,\Gamma)$.
\end{thm}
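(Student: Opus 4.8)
The plan is to prove the estimate by an Agmon-type weighted energy argument, with a weight that is a truncation of $m_0\frac{H}{\kappa}\dist(x,\Gamma)$ and a passage to the limit at the end. Fix $m_0>0$ small (to be specified, depending only on $\Lambda$) and, for $n\ge 1$, set
\[
\Phi_n(x)=m_0\,\frac{H}{\kappa}\,\min\big(\dist(x,\Gamma),\,n\kappa/H\big),
\]
a bounded Lipschitz function with $|\nabla\Phi_n|\le m_0 H/\kappa\le m_0\Lambda\kappa$ almost everywhere. Multiplying the Ginzburg--Landau equation \eqref{eq:GL} by $e^{2\Phi_n}\overline\psi$, integrating over $\Omega$, and using the boundary condition $\nu\cdot(\nabla-i\kappa H\Ab)\psi=0$ to discard the boundary term, one obtains the identity
\[
\int_\Omega\big|(\nabla-i\kappa H\Ab)(e^{\Phi_n}\psi)\big|^2\,dx=\int_\Omega e^{2\Phi_n}\big(|\nabla\Phi_n|^2+\kappa^2(1-|\psi|^2)\big)|\psi|^2\,dx\le\int_\Omega e^{2\Phi_n}\big(|\nabla\Phi_n|^2+\kappa^2\big)|\psi|^2\,dx .
\]
In particular, since $|\nabla\Phi_n|\le m_0\Lambda\kappa$, this already gives $\|(\nabla-i\kappa H\Ab)(e^{\Phi_n}\psi)\|_2\le C\kappa\,\|e^{\Phi_n}\psi\|_2$.

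Next I would bound the left-hand side from below by $\kappa H\int_\Omega|\curl\Ab|\,|e^{\Phi_n}\psi|^2\,dx$ and write $\curl\Ab=B_0+\curl(\Ab-\Fb)$. The term $\kappa H\int_\Omega|B_0|\,|e^{\Phi_n}\psi|^2$ is the ``classically forbidden'' term, and the key point is that the error $\kappa H\int_\Omega|\curl(\Ab-\Fb)|\,|e^{\Phi_n}\psi|^2$ is negligible. By the a priori estimate \eqref{eq:curl} one has $\|\curl(\Ab-\Fb)\|_2\le\frac{C}{H}\|\psi\|_2$, and by Theorem~\ref{thm:rb}, $\|\psi\|_2\le C(\kappa/H)^{1/6}=o(1)$. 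Combining Cauchy--Schwarz with the two-dimensional Ladyzhenskaya (Gagliardo--Nirenberg) inequality $\|v\|_4^2\le C\|v\|_2\big(\|v\|_2+\|\nabla|v|\|_2\big)$, the diamagnetic inequality $\|\nabla|v|\|_2\le\|(\nabla-i\kappa H\Ab)v\|_2$ applied to $v=e^{\Phi_n}\psi$, and the bound $\|(\nabla-i\kappa H\Ab)(e^{\Phi_n}\psi)\|_2\le C\kappa\|e^{\Phi_n}\psi\|_2$ from the first step, one gets
\[
\kappa H\int_\Omega|\curl(\Ab-\Fb)|\,|e^{\Phi_n}\psi|^2\,dx\le C\kappa^2\|\psi\|_2\,\|e^{\Phi_n}\psi\|_2^2=o(1)\,\kappa^2\,\|e^{\Phi_n}\psi\|_2^2 ,
\]
so that, altogether,
\[
\int_\Omega\big(\kappa H|B_0|-\kappa^2-|\nabla\Phi_n|^2\big)e^{2\Phi_n}|\psi|^2\,dx\le o(1)\,\kappa^2\,\|e^{\Phi_n}\psi\|_2^2 .
\]

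The geometric ingredient is the non-degenerate vanishing of $B_0$: by \eqref{eq:B0} and compactness there are constants $c_0,\delta_0>0$ with $|B_0(x)|\ge c_0\min(\dist(x,\Gamma),\delta_0)$ on $\overline\Omega$. Choosing a constant $A$ large enough (depending only on $\Lambda$, $c_0$, $m_0$) and setting $R_0=A\kappa/H$ --- which tends to $0$ under the standing hypothesis on $H$, hence $R_0<\delta_0$ for $\kappa$ large --- one has $\kappa H|B_0|-\kappa^2-|\nabla\Phi_n|^2\ge\kappa^2$ on $\{\dist(\cdot,\Gamma)\ge R_0\}$. Splitting the last integral over $\{t<R_0\}\cup\{t\ge R_0\}$ with $t=\dist(\cdot,\Gamma)$, estimating the contribution of $\{t<R_0\}$ crudely (there $e^{2\Phi_n}\le e^{2m_0A}$ once $n\ge A$, and $|\nabla\Phi_n|^2\le m_0^2\Lambda^2\kappa^2$), and absorbing the portion of the $o(1)\kappa^2$ remainder supported in $\{t\ge R_0\}$ into the good term, one deduces $\int_{\{t\ge R_0\}}e^{2\Phi_n}|\psi|^2\,dx\le C\int_{\{t<R_0\}}|\psi|^2\,dx$, and therefore
\[
\int_\Omega e^{2\Phi_n}|\psi|^2\,dx\le C\int_{\{\dist(x,\Gamma)<A\kappa/H\}}|\psi|^2\,dx ,
\]
with $C$ independent of $n$. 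A further integration by parts against $e^{2\Phi_n}\overline\psi$ together with one more Cauchy--Schwarz yields $\int_\Omega e^{2\Phi_n}|(\nabla-i\kappa H\Ab)\psi|^2\,dx\le C\kappa^2\int_\Omega e^{2\Phi_n}|\psi|^2\,dx$, which handles the gradient part of the integrand. Letting $n\to\infty$ and using monotone convergence (since $\Phi_n\uparrow m_0\frac{H}{\kappa}\dist(\cdot,\Gamma)$) gives the statement.

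The main obstacle is the control of the induced-field error term $\kappa H\int_\Omega|\curl(\Ab-\Fb)|\,|e^{\Phi_n}\psi|^2$: its prefactor $\kappa H$ is of order $\kappa^3$, far larger than the $\kappa^2$ available from the linear potential, so a crude $L^\infty$ bound on $\curl(\Ab-\Fb)$ is not enough. It is essential to combine the sharp $L^2$ decay $\|\psi\|_2\lesssim(\kappa/H)^{1/6}$ of Theorem~\ref{thm:rb} with the Ladyzhenskaya interpolation, while simultaneously using that the weight is only mildly growing, namely $|\nabla\Phi_n|\lesssim\kappa$, which is what keeps $\|(\nabla-i\kappa H\Ab)(e^{\Phi_n}\psi)\|_2\lesssim\kappa\|e^{\Phi_n}\psi\|_2$.
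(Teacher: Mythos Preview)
Your argument has a genuine gap at the step where you ``bound the left-hand side from below by $\kappa H\int_\Omega|\curl\Ab|\,|e^{\Phi_n}\psi|^2\,dx$''. The pointwise-field lower bound
\[
\int_\Omega\big|(\nabla-i\kappa H\Ab)v\big|^2\,dx\;\ge\;\kappa H\int_\Omega|\curl\Ab|\,|v|^2\,dx
\]
holds for $v\in C_c^\infty(\Omega)$ (Dirichlet), but is \emph{false} for functions that do not vanish on $\partial\Omega$: already for a constant field $B$, the Neumann ground state energy on a bounded domain behaves like $\Theta_0 B<B$, not $B$. Your weight $e^{\Phi_n}$ does not localize $\psi$ away from $\partial\Omega$, and $\psi$ satisfies the magnetic Neumann condition, so $e^{\Phi_n}\psi$ has no reason to vanish on the boundary. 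Everything you do \emph{after} this step (the handling of the induced-field error via Ladyzhenskaya plus Theorem~\ref{thm:rb}, the splitting at $R_0=A\kappa/H$, the monotone-convergence passage $n\to\infty$) is along the same lines as the paper and would be fine once this boundary issue is resolved.

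The paper addresses exactly this point by running two weights in parallel: an \emph{interior} weight $f=\chi\,e^{\ell^{-1}t}$ with $\chi\in C_c^\infty(\Omega)$ vanishing in a thin layer $\{\dist(x,\partial\Omega)\le\zeta\}$ with $\zeta=(\kappa H)^{-1/3}$, for which the pointwise-field inequality is legitimate, and a \emph{boundary} weight $g=\eta\,e^{\ell^{-1}t}$ supported in $\{\dist(x,\partial\Omega)\le 2\zeta\}$. For $g\psi$ the pointwise-field bound is replaced by a localization argument: a partition of unity into boundary boxes of scale $\zeta$, a gauge transformation to an approximately constant field $B_0(a_j)$ in each box (using Proposition~\ref{prop:A-F} and Proposition~\ref{prop:guage}), and then the Neumann spectral gap $\lambda^N(b)\ge\frac{\Theta_0}{2}|b|$ for large $|b|$. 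This boundary step is not a cosmetic fix --- it is a separate estimate of the same size as the interior one, and the two are added to close the argument. To repair your proof you would need to insert a boundary cutoff and carry out an analogous localization near $\partial\Omega$.
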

\begin{proof}
Let
\begin{equation}\label{zeta}
\zeta=(\kappa H)^{-1/3}\,.
\end{equation}
The assumption on $\kappa$ and $H$
ensures that
\begin{equation}\label{estzeta}
\kappa^{-1} \lesssim \zeta \ll 1\,.
\end{equation}
We will prove Theorem~\ref{thm:exdec} by establishing the following
two estimates {(away from the boundary or in a neighborhood of the
boundary)},
\begin{equation}
\int_{\{{\rm dist}(x,\partial\Omega)\geq\zeta\}}
e^{2m_0\frac H \kappa\, t(x)}\left(\frac1{\kappa^2}|(\nabla-i\kappa H\Ab)\psi|^2+|\psi(x)|^2\right)\,dx\leq
C_1\int_{\{t(x)\leq C\frac{\kappa}{H}\}}|\psi(x)|^2\,dx\,,\label{eq:ed-int}
\end{equation}
and
\begin{equation}
\int_{\{{\rm dist}(x,\partial\Omega)\leq\zeta\}}
e^{2 m_0 \frac H\kappa\, t(x)}\left(\frac1{\kappa^2}|(\nabla-i\kappa H\Ab)\psi|^2+|\psi(x)|^2\right)\,dx\leq
C_2\int_{\{t(x)\leq
C\frac{\kappa}{H}\}}|\psi(x)|^2\,dx\,,\label{eq:ed-bd}
\end{equation}
expressing the localization of the energy of $\psi$ near $\Gamma$.\\

The proof of \eqref{eq:ed-int} and \eqref{eq:ed-bd} is divided into several steps.\\
{\bf Step~1.}\\
Consider the parameters
\begin{equation}\label{eq:exp}
\xi \in (1,\infty)\,,\quad \sigma=\frac{H}{\kappa^2}\,,\quad
\ell=\frac{\xi}{\sigma\kappa}\,.
\end{equation}
Let
$$f(x)=\chi(x)\,\exp(\ell^{-1}\,t(x))\,,$$
and
$$g(x)=\eta(x)\,\exp(\ell^{-1}t(x))\,.$$
The functions $\chi\in C_c^\infty(\Omega)$ and $\eta\in
C^\infty(\overline\Omega)$ satisfy,
\begin{equation}
\left\{\begin{array}{ll}
&0\leq \chi\leq 1\quad{\rm in~}\Omega\,,\\
&\chi=1\quad{\rm in~}\{{\rm dist}(x,\partial\Omega)\geq \zeta\}\bigcup\{t(x)\geq \ell\}\,,\\
&\chi=0\quad{\rm in~}\{{\rm dist}(x,\partial\Omega)\leq \frac12\zeta\}\bigcup\{t(x)\leq \frac12\ell\}\,,\\
&|\nabla\chi|\leq C {\kappa} \quad{\rm in~}\Omega\,.
\end{array}
\right.
\end{equation}
and
\begin{equation}
\left\{
\begin{array}{ll}
&0\leq \eta\leq 1\quad{\rm in~}\Omega\,,\\
&\eta=1\quad{\rm in~}\{{\rm dist}(x,\partial\Omega)\leq \zeta\}\bigcup\{t(x)\geq \ell\}\,,\\
&\eta=0\quad{\rm in~}\{{\rm dist}(x,\partial\Omega)\geq 2\zeta\}\bigcup\{t(x)\leq \frac12\ell\}\,,\\
&{|\nabla\eta|\leq C\kappa\quad{\rm in~}\Omega\,.}
\end{array}
\right.
\end{equation}
Here we have used for the control of the  gradient \eqref{estzeta} and that
$$\kappa^{-1}\ll\ell\lesssim1\,.$$
%
%
%
Using the Ginzburg-Landau equation in \eqref{eq:GL}, we write,
\begin{multline}\label{eq:jdeFK}
\int_\Omega\Big(|(\nabla-i\kappa H\Ab)f\psi|^2-|\nabla
f|^2|\psi|^2\Big)\,dx=\kappa^2\int_\Omega\big(|\psi|^2-|\psi|^4\big)f^2\,dx
\leq\kappa^2\int_\Omega|f\psi|^2\,dx\,,
\end{multline}
and
\begin{multline}\label{eq:jdeFK*}
\int_\Omega\Big(|(\nabla-i\kappa H\Ab)g\psi|^2-|\nabla
g|^2|\psi|^2\Big)\,dx=\kappa^2\int_\Omega\big(|\psi|^2-|\psi|^4\big)g^2\,dx
\leq\kappa^2\int_\Omega|g\psi|^2\,dx\,.
\end{multline}
%
%
%
%
%
%
%
{\bf Step~2.}\\
In this step, we determine a lower bound of
$\displaystyle\int_\Omega|(\nabla-i\kappa H\Ab)f\psi|^2\,dx$. Notice
that $f\psi\in C_c^\infty(\Omega)$. Consequently, we may write (see
\eqref{eq:lb-t}),
$$
\int_\Omega|(\nabla-i\kappa H\Ab)f\psi|^2\,dx
\geq \kappa H\int_\Omega|\curl\Fb|\,|f\psi|^2\,dx-\kappa H\int_\Omega|\curl\Ab-\curl\Fb|\,|f\psi|^2\,dx\,.
$$
We use the following estimates,
\begin{align*}
&\int_\Omega|\curl\Fb|\,|f\psi|^2\,dx\geq \frac 1 M \,\ell\int_\Omega|f\psi|^2\,dx&[{\rm by~\eqref{eq:cond-B0}}]&\\
&  \int_\Omega|\curl\Ab-\curl\Fb|\,|f\psi|^2\,dx\leq \frac{C}{H}\left(\frac{\kappa}{H}\right)^{1/6}\|f\psi\|_4^2&[{\rm by}~\eqref{l2curl}]&\,,
\end{align*}
and obtain
$$\int_\Omega|(\nabla-i\kappa H\Ab)f\psi|^2\,dx\geq \frac 1M\, \kappa
H\ell\int_\Omega|f\psi|^2\,dx-C\kappa\left(\frac{\kappa}{H}\right)^{1/6}\|f\psi\|_4^2\,.$$
Notice that $f\psi\in C_c^\infty(\Omega)\subset H^1(\R^2)\,$. By the continuous
Sobolev embedding of $H^1(\R^2)$ in $L^4(\R^2)$ and a scaling, we
get for all $\eta\in(0,1)$,
\begin{align*}
\|f\psi\|_4^2&=\big\|\,|f\psi|\,\big\|_4^2&&\\
&\leq C_{\rm Sob}\Big(\eta\|\nabla |f\psi|\|_2^2+\eta^{-1}\|f\psi\|_2^2\Big)&&\\
&\leq C_{\rm Sob}\Big(\eta\|(\nabla-i\kappa H\Ab)f\psi\|_2^2+\eta^{-1}\|f\psi\|_2^2\Big)&[\text{\rm By~the~diamagnetic~inequality}]&\,.
\end{align*}
We select $\eta=\frac{ 1}{CC_{\rm Sob}}  \kappa^{-1} \left(\frac
\kappa H\right)^{-\frac 16}$ and obtain,
\begin{equation}\label{eq:g1*}
\int_\Omega|(\nabla-i\kappa H\Ab)f\psi|^2\,dx\geq \Big(\frac{\kappa
H\ell}{2M}-\widehat C\kappa^2\left(\frac{\kappa}{H}\right)^{1/3}\Big)\int_\Omega|f\psi|^2\,dx\,.
\end{equation}
Thanks to the choice of the parameters in \eqref{eq:exp}, the lower
bound in \eqref{eq:g1*} becomes,
\begin{equation}\label{eq:g1}
\int_\Omega|(\nabla-i\kappa H\Ab)f\psi|^2\,dx\geq \Big(\frac{\xi\kappa^2}{2M}
- \widehat C\kappa^2 \left(\frac{\kappa}{H}\right)^{1/3}\Big)\int_\Omega|f\psi|^2\,dx\,.
\end{equation}
{\bf Step~3.}\\
We insert \eqref{eq:g1} into \eqref{eq:jdeFK} and use that
\begin{align}\label{eq:nablaf} \int_\Omega|\nabla
f|^2|\psi|^2\,dx&\leq
2\ell^{-2}\int_\Omega|f\psi|^2\,dx+2 \int_\Omega|\nabla\chi|^2\exp(2\ell^{-1}t(x))|\psi|^2\,dx\nonumber\\
&\leq 2\ell^{-2}\int_\Omega|f\psi|^2\,dx+C\kappa^2\int_\Omega|g\psi|^2\,dx+C\kappa^2\int_{\{\ell^{-1}t(x)\leq1\}}|\psi|^2\,dx\,,
\end{align}
to obtain,
\begin{multline}\label{eq:expdec1}
\int_\Omega\left(\frac12|(\nabla-i\kappa
H\Ab)f\psi|^2+\frac12\Big(\frac{\xi\kappa^2}{2M}
- 2\frac{\sigma^2}{\xi^2}\kappa^2-\widehat C\kappa^2\left(\frac{\kappa}{H}\right)^{1/3}\Big)|f\psi|^2\,dx\right)\\
\leq \widehat C  \kappa^2\int_\Omega|g\psi|^2\,dx
+ \widehat C \kappa^2\int_{\{\ell^{-1}t(x)\leq1\}}|\psi|^2\,dx\,.
\end{multline}
%
%
%
{\bf Step~4.}\\
%
%
We will  determine a lower bound of
$\displaystyle\int_\Omega|(\nabla-i\kappa H\Ab)g\psi|^2\,dx$. We
cover the set
$$\Omega_{\zeta,\ell}=\{x\in\overline{\Omega}~:~{\rm dist}(x,\partial\Omega)\leq
2\zeta\,,~{\rm dist}(x,\Gamma)\geq \ell\}$$ by a family of squares (in tubular coordinates),
$$\mathcal K(a_j,\zeta)=\{x~\in\overline{\Omega}~:~{\rm
dist}(x,\partial\Omega)\leq 2\zeta,~{\rm
dist}_{\partial\Omega}(p(x),a_j)\leq 2\zeta\}\,,$$ where:
\begin{itemize}
\item ${\rm dist}_{\partial\Omega}$ is the arc-length distance
along $\partial\Omega$\,.
\item If $x\in\Omega_{\zeta,\ell}$ and $\zeta$ is sufficiently small, $p(x)$ is the unique point on $\partial\Omega$ satisfying
${\rm dist}(x,p(x))={\rm dist}(x,\partial\Omega)$\,.
\item For all $j$,  $a_j\in\partial\Omega\cap\overline{\Omega_{\zeta,\ell}}$.
\end{itemize}
Let $(\chi_j)$ be a partition of unity such that
$$\sum_{j}\chi_j^2=1\,,\quad \sum_j|\nabla\chi_j|^2 \leq
C\zeta^{-2}\,,\quad {\rm supp}\,\chi_j\subset\mathcal K(a_j,2\zeta)\,.
$$
There holds the decomposition formula
\begin{align}
\int_{\Omega}|(\nabla-i\kappa
H\Ab)g\psi|^2\,dx&=\sum_j\int_\Omega|(\nabla-i\kappa
H\Ab)\chi_jg\psi|^2\,dx-\sum_j\int_\Omega|\nabla\chi_j|^2\,|g\psi|^2\,dx\nonumber\\
&\geq\sum_j\int_\Omega|(\nabla-i\kappa
H\Ab)\chi_jg\psi|^2\,dx-C\zeta^{-2}\int_\Omega|g\psi|^2\,dx\,.\label{eq:half-plane*}
\end{align}
Next we define the gauge function
$$\alpha_j=\big(\Ab(a_j)-\Fb(a_j)\big)\cdot(x-a_j)\,.$$
Using the Cauchy-Schwarz inequality, Proposition \ref{prop:A-F} and
Theorem \ref{thm:rb},   we may write,
\begin{multline}\label{eq:half-plane1*}
\int_\Omega|(\nabla-i\kappa
H\Ab)\chi_jg\psi|^2\,dx=\int_\Omega|(\nabla-i\kappa
H(\Ab-\nabla\alpha_j))e^{-i\kappa
H\alpha_j}\chi_jg\psi|^2\,dx\\
\geq\frac12\int_\Omega|(\nabla-i\kappa H\Fb)e^{-i\kappa
H\alpha_j}\chi_jg\psi|^2\,dx-C\kappa^2\left(\frac{\kappa}{H}\right)^{1/3}\zeta^{2\alpha}\int_\Omega|\chi_jg\psi|^2\,dx\,.
\end{multline}
Next, we observe that there exists a gauge function $\varphi_j$
satisfying
$$\big|\Fb(x)-\big(B_0(x_j)\Ab_0(x-a_j)+\nabla\varphi_j\big)\big|\leq
C\zeta^2\quad{\rm in ~}\mathcal K_j(a_j,\zeta)\,.$$ Again, using
the Cauchy-Schwarz inequality, we may write,
\begin{multline}\label{eq:half-plane2*}
\int_\Omega|(\nabla-i\kappa H\Fb)e^{-i\kappa
H\alpha_j}\chi_jg\psi|^2\,dx\\
\geq \frac12\int_\Omega|(\nabla-i\kappa
HB_0(a_j)\Ab_0(x-a_j))e^{-i\kappa H\varphi_j}e^{-i\kappa
H\alpha_j}\chi_jg\psi|^2\,dx-\kappa^2H^2\zeta^4\int_\Omega|\chi_jg\psi|^2\,dx\,.
\end{multline}
Now, we are reduced to the analysis of the Neumann realization of
the  Schr\"odinger operator  with a constant magnetic field equal to
$\kappa HB_0(a_j)$ in our case. In the half-plane case, the ground
state energy of this operator is $\Theta_0\kappa H|B_0(a_j)|$, where
the constant $\Theta_0$ is universal and satisfies
$\Theta_0\in\,(\frac12,1)$\,. The result remains asymptotically true
in general domains with smooth and compact boundary \cite{HelMo1}.
More precisely, there exists a function
$${\rm err}:\R_+\to\R_+\,,$$
such that $\lim_{|b|\to\infty}{\rm err}(b)=0$ and
$$\forall~b\,,\quad \lambda^N(b)\geq \Theta_0 |b|-|b|\,{\rm
err}(b)\,,$$ where $\lambda^N(b)$ is the lowest eigenvalue of the
operator $-(\nabla-ib\Ab_0)^2$ in $L^2(\Omega)$ with Neumann
boundary condition.

Notice that by the assumptions on $\ell$ and the points $(a_j)$, we
may use \eqref{eq:cond-B0} with $x=a_j$ and get,
$$\forall~j\,,\quad \kappa H|B_0(a_j)|\geq  \frac 1M \ell\,\kappa H\gg
1\,.$$
Moreover, the magnetic potentials $\Ab_0(x)$ and $\Ab_0(x-a_j)$ are
gauge equivalent since
$$\Ab_0(x-a_j)=\Ab_0(x)-\Ab_0(a_j)=\Ab_0(x)-\nabla u_j(x)\,,$$
 with $u_j(x) = A_0(a_j)\cdot x\,$.\\
In that way,  when $\kappa$ is sufficiently large, we may write,
\begin{multline}\label{eq:half-plane3*}
\int_\Omega|(\nabla-i\kappa HB_0(a_j)\Ab_0(x-a_j))e^{-i\kappa
H\varphi_j}e^{-i\kappa H\alpha_j}\chi_jg\psi|^2\,dx
\\
\geq \frac{\Theta_0}{2} \kappa H
|B_0(a_j)|\,\int_\Omega|\chi_jg\psi|^2\,dx\geq  \frac{1}{4M}\,
\ell\kappa H\, \int_\Omega|\chi_jg\psi|^2\,dx\,.
\end{multline}
%
%
%
Collecting the estimates in \eqref{eq:half-plane*},
\eqref{eq:half-plane1*}, \eqref{eq:half-plane2*} and
\eqref{eq:half-plane3*}, we get,
\begin{equation}\label{eq:g2}
\int_\Omega|(\nabla-i\kappa H\Ab)g\psi|^2\,dx\geq \kappa\left({\frac{H\ell}{4M}} -C\kappa H^2\zeta^4-\frac{C}{\kappa\zeta^2}-C\kappa\left(\frac\kappa{H}\right)^{1/3}\zeta^{2\alpha}\right)
\int_\Omega|g\psi|^2\,dx\,.
\end{equation}
Recall the definition of the  parameters in \eqref{eq:exp} and
\eqref{zeta}:
$$ \zeta=(H\kappa)^{-1/3}=\sigma^{-1/3}\kappa^{-1}\,.
$$ We insert
\eqref{eq:g2} into \eqref{eq:jdeFK*} and use that
\begin{align*}
\int_\Omega|\nabla g|^2|\psi|^2\,dx&\leq
2 \ell^{-2}\int_\Omega|g\psi|^2\,dx+ 2 \int_\Omega|\nabla\eta|^2\exp(2\ell^{-1}t(x))|\psi|^2\,dx\\
&\leq
2 \ell^{-2}\int_\Omega|g\psi|^2\,dx+C\kappa^2\int_\Omega|f\psi|^2\,dx+
C\kappa^2\int_{\{\ell^{-1}t(x)\leq1\}}|\psi|^2\,dx\,,
\end{align*}
to obtain,
\begin{multline}\label{eq:expdec2}
\int_\Omega\left(\frac12|(\nabla-i\kappa
H\Ab)g\psi|^2+\frac12\Big(\frac{\xi\kappa^2}{4M}
- 2 \ell^{-2}-C\sigma^{2/3}\kappa-C\kappa\left(\frac{\kappa}{H}\right)^{1/3}\zeta^{2\alpha}\Big)|g\psi|^2\,dx\right)\\
\leq \kappa^2\int_\Omega|f\psi|^2\,dx
+C\kappa^2\int_{\{\kappa\,t(x)\leq1\}}|\psi|^2\,dx\,.
\end{multline}
{\bf Step~5.}\\
Adding the two inequalities in \eqref{eq:expdec1} and
\eqref{eq:expdec2}, we get,
\begin{align}
&\frac12\int_\Omega\left(|(\nabla-i\kappa
H\Ab)g\psi|^2+|(\nabla-i\kappa H\Ab)f\psi|^2\right)\,dx\nonumber\\
&\hskip1cm+\frac12\int_\Omega\Big(\frac{\xi\kappa^2}{4M}
-C\frac{\sigma^2}{\xi^2}\kappa^2-C\kappa^2-C\sigma^{2/3}\kappa-C\kappa\left(\frac{\kappa}{H}\right)^{1/3}\zeta^{2\alpha}\Big)\Big(|f\psi|^2+|g\psi|^2\Big)
\nonumber\\
&\hskip2cm\leq C\kappa^2\int_{\{\kappa\,t(x)\leq1\}}|\psi|^2\,dx\,.\label{eq:expdec*}
\end{align}
Recall that $\sigma$ satisfies $\kappa^{-1}\ll\sigma\leq\Lambda$. We
select $\xi$ sufficiently large
 such that
$$\frac{\xi}{4M} -C\frac{\Lambda^2}{\xi^2}-C>2\,.$$
Since $\left(\frac{\kappa}{H}\right)^{1/3}\ll 1$ and $ \zeta\ll1$,
we get,
$$\int_\Omega\left(\frac12|(\nabla-i\kappa H\Ab)f\psi|^2+\frac{\kappa^2}2|f\psi|^2\,dx\right)\leq
C\kappa^2\int_{\{\ell^{-1}\,t(x)\leq1\}}|\psi|^2\,dx\,,
$$
and
$$\int_\Omega\left(\frac12|(\nabla-i\kappa H\Ab)g\psi|^2+\frac{\kappa^2}2|g\psi|^2\,dx\right)\leq C\kappa^2\int_{\{\ell^{-1}\,t(x)\leq1\}}|\psi|^2\,dx\,.
$$ Thanks to the definitions of $f$ and $g$, the two aforementioned inequalities yield the inequalities in
\eqref{eq:ed-int} and \eqref{eq:ed-bd} with $m_0= 1 /\xi$.
\end{proof}

As a consequence of Theorem~\ref{thm:exdec}, we get an improvement
of the bound given in Theorem~\ref{thm:rb}.
\begin{prop}\label{thm:ob}
Under the assumptions of Theorem~\ref{thm:exdec}, there holds,
$$
\|\psi\|_2\leq C\sqrt{\frac \kappa H}\,.$$
\end{prop}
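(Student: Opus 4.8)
The plan is to feed the exponential-decay estimate of Theorem~\ref{thm:exdec} back into itself, using only that the weight $\exp\!\big(2m_0\frac H\kappa t(x)\big)$ is bounded below by $1$, together with the trivial pointwise bound $\|\psi\|_\infty\le 1$ from Proposition~\ref{prop:FH-b} and an elementary volume estimate for a tubular neighbourhood of $\Gamma$.

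First I would discard the gradient term and the exponential weight on the left-hand side of the inequality in Theorem~\ref{thm:exdec}, keeping only $\int_\Omega|\psi|^2\,dx$. Since the assumptions of Theorem~\ref{thm:exdec} are in force, this yields
\begin{equation*}
\|\psi\|_2^2=\int_\Omega|\psi(x)|^2\,dx\le \int_\Omega \exp\!\Big(2m_0\frac H\kappa\, t(x)\Big)|\psi(x)|^2\,dx\le C\int_{\{t(x)\le C\kappa/H\}}|\psi(x)|^2\,dx\,,
\end{equation*}
where $t(x)=\dist(x,\Gamma)$.

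Next I would bound the right-hand side. Using $\|\psi\|_\infty\le1$ (Proposition~\ref{prop:FH-b}), one gets
\begin{equation*}
\int_{\{t(x)\le C\kappa/H\}}|\psi(x)|^2\,dx\le \big|\{x\in\Omega~:~\dist(x,\Gamma)\le C\kappa/H\}\big|\,.
\end{equation*}
Because $\Gamma$ is a finite union of simple smooth curves (of finite total length) and $\kappa/H\approx\kappa^{-1}\to0$ under the hypothesis $H\le\Lambda\kappa^2$, the area of this tubular neighbourhood of $\Gamma$ is $\le C'\,\kappa/H$ for $\kappa$ large. Combining the two displays gives $\|\psi\|_2^2\le C''\,\kappa/H$, hence $\|\psi\|_2\le C\sqrt{\kappa/H}$, as claimed.

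There is essentially no obstacle here: the only point requiring a word of care is the tubular-neighbourhood volume bound, which is standard once one knows $\Gamma$ is a finite union of smooth curves and that the neighbourhood width $C\kappa/H$ is small (so the tube does not self-overlap and its area is comparable to width times length). Everything else is a direct consequence of Theorem~\ref{thm:exdec} and the uniform $L^\infty$ bound on $\psi$.
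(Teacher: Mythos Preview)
Your argument is correct and is exactly the one the paper intends: the proposition is stated there simply ``as a consequence of Theorem~\ref{thm:exdec}'' with no further details, and your reconstruction (drop the weight on the left, use $\|\psi\|_\infty\le1$ on the right, bound the area of the tube around $\Gamma$) is the intended proof. One small slip: the smallness of $\kappa/H$ does not follow from $H\le\Lambda\kappa^2$ (that gives a \emph{lower} bound $\kappa/H\ge(\Lambda\kappa)^{-1}$); it follows from the lower bound $H\ge\epsilon(\kappa)\kappa^2$ together with $\kappa\,\epsilon(\kappa)\to\infty$, which yields $\kappa/H\le(\kappa\,\epsilon(\kappa))^{-1}\to0$.
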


Combining the results in Propositions~\ref{prop:FH-b},
 \ref{prop:A-F} and \ref{thm:ob}, we obtain the improved estimates:
\begin{prop}\label{thm:oe}
Under the assumptions of Theorem~\ref{thm:exdec} and
Proposition~\ref{prop:A-F}, there holds,
\begin{align*}
& \|\curl\Ab-\curl\Fb\|_2\leq  \frac{C}{H}\sqrt{\frac\kappa H} \,,\\
&\|\Ab-\Fb\|_{C^{1,\alpha}(\overline\Omega)}\leq C_\alpha \sqrt{\frac \kappa H}\,,\\
&\|\Ab-\Fb\|_{C^{0,\alpha}(\overline\Omega)}\leq \frac{\widehat C_\alpha}{H}\sqrt{\frac\kappa H}\,.
\end{align*}
\end{prop}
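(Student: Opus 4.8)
The plan is to derive all three bounds by feeding the sharpened $L^2$ estimate $\|\psi\|_2\le C\sqrt{\kappa/H}$ of Proposition~\ref{thm:ob} into the a priori estimates already recorded in Propositions~\ref{prop:FH-b} and \ref{prop:A-F}; no new idea is required beyond keeping track of the powers of $\kappa$ and $H$. First I would dispose of the induced magnetic field: inequality \eqref{eq:curl} gives $\|\curl(\Ab-\Fb)\|_2\le (C/H)\|\psi\|_2$, so Proposition~\ref{thm:ob} immediately yields $\|\curl(\Ab-\Fb)\|_2\le (C/H)\sqrt{\kappa/H}$, which is the first claimed bound.

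Next, for the $C^{1,\alpha}$ bound I would use \eqref{eq:A-F}, namely $\|\Ab-\Fb\|_{C^{1,\alpha}(\overline\Omega)}\le C\,\frac{1+\kappa H+\kappa^2}{\kappa H}\,\|\psi\|_\infty\,\|\psi\|_2$, combined with $\|\psi\|_\infty\le 1$ and Proposition~\ref{thm:ob}. The only point needing a short verification — and really the closest thing to an obstacle — is that the prefactor $\frac{1+\kappa H+\kappa^2}{\kappa H}$ stays bounded in the regime at hand: since $H\ge\epsilon(\kappa)\kappa^2$ with $\kappa\epsilon(\kappa)\to\infty$, one has $\kappa^2/(\kappa H)=\kappa/H\le 1/(\kappa\epsilon(\kappa))\to 0$ and $1/(\kappa H)\to 0$, so that prefactor is $\le C$ for $\kappa$ large. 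This gives $\|\Ab-\Fb\|_{C^{1,\alpha}(\overline\Omega)}\le C\sqrt{\kappa/H}$.

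Finally, for the $C^{0,\alpha}$ bound I would invoke Proposition~\ref{prop:A-F}, which gives $\|\Ab-\Fb\|_{C^{0,\alpha}(\overline\Omega)}\le C\big(\|\curl(\Ab-\Fb)\|_2+\frac1{\kappa H}\|(\nabla-i\kappa H\Ab)\psi\|_2\,\|\psi\|_\infty\big)$; the first term was just bounded by $(C/H)\sqrt{\kappa/H}$, and for the second, \eqref{eq:grad<kappa} gives $\|(\nabla-i\kappa H\Ab)\psi\|_2\le\kappa\|\psi\|_2$, so with $\|\psi\|_\infty\le 1$ and Proposition~\ref{thm:ob} it is $\le \frac{\kappa}{\kappa H}\sqrt{\kappa/H}=\frac1H\sqrt{\kappa/H}$, and adding the two yields $\|\Ab-\Fb\|_{C^{0,\alpha}(\overline\Omega)}\le (C/H)\sqrt{\kappa/H}$. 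In short, the whole statement is a bookkeeping exercise once Proposition~\ref{thm:ob} is available, and that proposition itself follows from Theorem~\ref{thm:exdec} by bounding the weighted $L^2$-norm below by $\|\psi\|_2^2$ and bounding its right-hand side above via $\|\psi\|_\infty\le 1$ together with the fact that a tubular neighborhood of $\Gamma$ of width $\approx\kappa/H$ has area $\approx\kappa/H$; hence no serious difficulty is expected, only the care indicated above in checking that the $\kappa$-dependent coefficient in \eqref{eq:A-F} stays bounded under $\epsilon(\kappa)\kappa^2\le H\le\Lambda\kappa^2$.
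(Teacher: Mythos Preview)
Your proposal is correct and follows exactly the approach the paper intends: the paper states only that Proposition~\ref{thm:oe} is obtained by combining Propositions~\ref{prop:FH-b}, \ref{prop:A-F} and \ref{thm:ob}, and you have spelled out precisely this combination, including the one non-automatic point that the coefficient $\frac{1+\kappa H+\kappa^2}{\kappa H}$ stays bounded in the regime $\epsilon(\kappa)\kappa^2\le H\le\Lambda\kappa^2$.
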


\section{Energy lower bound}\label{s7}
In this section, we will derive lower bounds of the following
energy,
\begin{equation}\label{eq:GLf-U}
\mathcal E_0(\psi,\Ab;U)=\int_U\left(|(\nabla-i\kappa H\Ab)\psi|^2-\kappa^2|\psi|^2+\frac{\kappa^2}2|\psi|^4\right)\,dx\,,
\end{equation}
where $U\subset \R^2$ is an open set such that $\overline{U} \subset
\Omega$.

\begin{prop}\label{prop:lb}
Let $\Lambda>0$ and $\epsilon:\R\to\R_+$ such that
$\displaystyle\lim_{\kappa\to \infty}\kappa\,\epsilon(\kappa)=
\infty$
 and $\displaystyle\lim_{\kappa\to \infty}\epsilon(\kappa)=0~.$ For $\alpha \in (0,1)$, there exist positive constants $C$ and
$\kappa_0$ such that,
for $\ell\in\,(0,1)\,$,  $\delta\in\,(0,1)\,$,
$a_j\in\Gamma$, $D(a_j,\ell)\subset\Omega$,  $x_j\in
\overline{D(a_j,\ell)}\cap \Gamma$,   $h\in
C_c^\infty\big(D(a_j,\ell)\big)$  a function satisfying
$\|h\|_\infty\leq1$,
  $(\psi,\Ab)$  a critical point of the
functional in \eqref{eq-3D-GLf}, $\kappa\geq\kappa_0\,,$ and
$$ \epsilon(\kappa)\kappa^2 \leq H\leq \Lambda\kappa^2\,,$$
the following holds:
$$\mathcal E_0\big(h\,\psi,\Ab; D(a_j,\ell)\big)\geq
(1-\delta)\,2\ell\,\kappa\,\left(|\nabla
B_0(x_j)|\frac{H}{\kappa^2}\right)^{1/3}\,E\left(|\nabla
B_0(x_j)|\frac{H}{\kappa^2}\right)- r  \,,$$ where
$$r=C\Big(\delta\kappa^2+\delta^{-1}\left(\frac{\kappa^3}{H}\ell^{2\alpha}+\kappa^2H^2\ell^6\right)\Big)\int_{D(a_j,\ell)}|h\psi|^2\,dx\,.$$
\end{prop}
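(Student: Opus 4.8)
The plan is to reduce the local energy $\mathcal E_0(h\psi,\Ab;D(a_j,\ell))$ to the reduced Ginzburg--Landau energy $\er(L_j;R_j)$ studied in Theorem~\ref{thm-FK}, by a combination of a gauge transformation, a magnetic-potential replacement, and a change of variables, and then to invoke the lower bound $\er(L;R)\geq 2RE(L)$ from Theorem~\ref{thm-FK}. First I would use Proposition~\ref{prop:guage}(2) to find an angle $\nu_j$ and a gauge function $\phi_j$ with
$$
\big|\Fb(x)-\big(|\nabla B_0(x_j)|\Ab_{{\rm app},\nu_j}(x-a_j)+\nabla\phi_j\big)\big|\leq C\ell^3\quad\text{in }D(a_j,\ell).
$$
Writing $\Ab=\Fb+(\Ab-\Fb)$, the term $\Ab-\Fb$ is controlled in $C^{0,\alpha}$ by Proposition~\ref{thm:oe} (or Proposition~\ref{prop:A-F} together with Theorem~\ref{thm:rb}), so $\kappa H|\Ab-\Fb|=\mathcal O(\kappa H\cdot H^{-1}(\kappa/H)^{1/2}\ell^{\alpha})=\mathcal O(\kappa^{3/2}/H^{1/2}\,\ell^\alpha)$ on $D(a_j,\ell)$; after performing the gauge transformation $e^{-i\kappa H\phi_j}h\psi$, the Cauchy--Schwarz inequality with parameter $\delta$ gives
$$
\mathcal E_0(h\psi,\Ab;D(a_j,\ell))\geq (1-\delta)\,\widetilde{\mathcal E}_0\big(\tilde u;D(a_j,\ell)\big)-C\delta^{-1}\big(\kappa^3H^{-1}\ell^{2\alpha}+\kappa^2H^2\ell^6\big)\int_{D(a_j,\ell)}|h\psi|^2\,dx-C\delta\kappa^2\int_{D(a_j,\ell)}|h\psi|^2\,dx,
$$
where $\widetilde{\mathcal E}_0$ is the energy with $\Ab$ replaced by $|\nabla B_0(x_j)|\Ab_{{\rm app},\nu_j}(x-a_j)$ and $\tilde u=e^{-i\kappa H\phi_j}h\psi$ (here the $\ell^3$ error from $\Fb$ gets squared into the $\ell^6$ term, and the $\mathcal O(\kappa^{3/2}/H^{1/2}\ell^\alpha)$ error squared gives the $\kappa^3H^{-1}\ell^{2\alpha}$ term). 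The extra term from completing the square involving $|\psi|^4$ is absorbed using $|\psi|\le 1$, producing the $\delta\kappa^2$ contribution.

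Next I would translate $x\mapsto x-a_j$ so that the test function $\tilde u(\cdot+a_j)\in H^1_0(D(0,\ell))$, recognize $\widetilde{\mathcal E}_0$ as the functional $\mathcal G$ of \eqref{eq-GL-r-new} with $L=L_j:=|\nabla B_0(x_j)|\,H/\kappa^2$ (since $L\kappa^3\App=L\kappa^3\cdot(-|x|^2/2)\nb$ matches $\kappa H\cdot|\nabla B_0(x_j)|\Ab_{{\rm app},\nu_j}$ when $H=L\kappa^2/|\nabla B_0(x_j)|$), and then apply the scaling $x\mapsto\sqrt{m}\,\kappa\,x$ with $m=L_j^{2/3}$ exactly as in \eqref{eq-gs-er}--\eqref{eq-gs-er-nu'}. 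This shows
$$
\widetilde{\mathcal E}_0\big(\tilde u(\cdot+a_j)\big)\geq \Er(\kappa,L_j,\nu_j;\ell)=\er(\nu_j,L_j;R_j)=\er(0,L_j;R_j)\geq \er(L_j;R_j),
$$
with $R_j=L_j^{1/3}\kappa\ell$, using Theorem~\ref{thm-tdl-e} for the $\nu$-independence and the comparison $\er(0,L;R)\ge\er(L;R)$. Finally Theorem~\ref{thm-FK} gives $\er(L_j;R_j)\geq 2R_j E(L_j)$, and substituting $2R_j=2L_j^{1/3}\kappa\ell=2\kappa\ell\,(|\nabla B_0(x_j)|H/\kappa^2)^{1/3}$ produces the claimed main term $(1-\delta)\,2\ell\kappa\,(|\nabla B_0(x_j)|H/\kappa^2)^{1/3}E(|\nabla B_0(x_j)|H/\kappa^2)$, noting $E(L_j)\le 0$ so multiplying by $(1-\delta)$ keeps the inequality in the correct direction after combining with the error terms.

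The main obstacle I anticipate is bookkeeping the three successive approximations (gauge, potential replacement, scaling) so that all error terms collapse into precisely the stated remainder $r=C(\delta\kappa^2+\delta^{-1}(\kappa^3H^{-1}\ell^{2\alpha}+\kappa^2H^2\ell^6))\int|h\psi|^2$, in particular checking that the $C^{0,\alpha}$-bound on $\Ab-\Fb$ from Proposition~\ref{thm:oe} is applied at the right power of $\ell$ and $\kappa/H$, and that the cross terms from the Cauchy--Schwarz splitting $|(\nabla-i\kappa H\Ab)\tilde u|^2\ge(1-\delta)|(\nabla-i\cdots)\tilde u|^2-\delta^{-1}|\cdots|^2|\tilde u|^2$ are uniformly controlled on the support of $h$. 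A secondary point is ensuring that the lower-bound step $\widetilde{\mathcal E}_0\ge\er(\nu_j,L_j;R_j)$ is legitimate, i.e.\ that extending $\tilde u$ by zero outside $D(0,\ell)$ is harmless and that the hypotheses of Theorem~\ref{thm-FK} (merely $L>0$, $R>0$) impose no additional constraint here, which they do not.
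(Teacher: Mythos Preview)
Your plan matches the paper's proof almost step for step: gauge away to $\Fb$, then to $|\nabla B_0(x_j)|\Ab_{{\rm app},\nu_j}$ via Proposition~\ref{prop:guage}(2), translate, recognize the functional $\mathcal G$ of \eqref{eq-GL-r-new}, and apply Theorem~\ref{thm-tdl-e} followed by the inequality $\er(L;R)\geq 2R\,E(L)$ from Theorem~\ref{thm-FK}. The identification $R_j=L_j^{1/3}\kappa\ell$ and the sign observation $E(L_j)\leq 0$ are exactly how the paper concludes.

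There is one small but genuine gap in your bookkeeping. You assert the pointwise bound $\kappa H|\Ab-\Fb|=\mathcal O\big(\kappa^{3/2}H^{-1/2}\,\ell^{\alpha}\big)$ directly from the $C^{0,\alpha}$ estimate of Proposition~\ref{thm:oe}. That estimate controls the H\"older \emph{seminorm}; to turn it into a pointwise bound carrying the factor $\ell^{\alpha}$ on $D(a_j,\ell)$ you must first subtract the value of $\Ab-\Fb$ at a reference point. The paper does this by introducing an \emph{additional} linear gauge
\[
\alpha_j(x)=\big(\Ab(a_j)-\Fb(a_j)\big)\cdot(x-a_j),
\]
so that $|\Ab-\Fb-\nabla\alpha_j|\leq \|\Ab-\Fb\|_{C^{0,\alpha}}\,|x-a_j|^{\alpha}\leq CH^{-1}(\kappa/H)^{1/2}\ell^{\alpha}$, and then passes from $\Ab$ to $\Fb$ via $e^{-i\kappa H\alpha_j}$ \emph{before} applying your gauge $e^{-i\kappa H\phi_j}$. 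In other words, two successive gauges (and hence two applications of Cauchy--Schwarz with parameter $\delta$) are used, one producing the $\delta^{-1}\kappa^{3}H^{-1}\ell^{2\alpha}$ term and the other the $\delta^{-1}\kappa^{2}H^{2}\ell^{6}$ term; the resulting $(1-\delta)^{2}$ is absorbed into $(1-\delta)$ after renaming constants. Once you insert this missing linear gauge, your argument is complete and coincides with the paper's.
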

\begin{proof}
Let $\alpha_j=\big(\Ab(a_j)-\Fb(a_j)\big)\cdot(x-a_j)$. Thanks to
Proposition~\ref{thm:oe}, we have
\begin{equation}\label{eq:lb-gauge}
\left|\Ab-(\Fb+\nabla\alpha_j)\right|\leq
C\, \|\Ab-\Fb\|_{C^{0,\alpha}(\overline\Omega)}{ |x-a_j|^\alpha} \leq
\frac{C}{H}\sqrt{\frac \kappa H}\,\ell^\alpha \quad{\rm in}\quad D(a_j,\ell)\,.
\end{equation}
Notice that,
\begin{align}
\mathcal E_0\big(h\,\psi,\Ab; D(a_j,\ell)\big)&=\mathcal E_0\big(h\,\psi\,e^{-i\kappa H\alpha_j},\Ab-\nabla\alpha_j; D(a_j,\ell)\big)\nonumber\\
&\geq (1-\delta)\mathcal E_0\big(h\,\psi\,e^{-i\kappa H\alpha_j},\Fb; D(a_j,\ell)\big)\nonumber\\ & \quad
-C\left(\delta\kappa^2\int_{D(a_j,\ell)}|h\psi|^2\,dx+\delta^{-1}\kappa^2H^2\, \int_{D(a_j,\ell)} |\Ab-(\Fb+\nabla\alpha_j)|^2 |h\psi|^2\,dx\right)\nonumber\\
\end{align}
Using \eqref{eq:lb-gauge}, we get,
\begin{equation}\label{eq:lb-A-F}
\mathcal E_0\big(h\,\psi,\Ab; D(a_j,\ell)\big)\geq (1-\delta)\mathcal E_0\big(h\,\psi\,e^{-i\kappa H\alpha_j},\Fb; D(a_j,\ell)\big)-C
\left(\delta\kappa^2+\delta^{-1}\frac{\kappa^3}{H}\ell^{2\alpha}\right)\int_{D(a_j,\ell)}|h\psi|^2\,dx\,.
\end{equation}
 Let  $$f_j=h\,\psi\,e^{-i\kappa
H\alpha_j}e^{i\kappa H\phi_j}\,,
$$
where  $\phi_j$ is defined in
Proposition~\ref{prop:guage}. \\
 Notice that $f_j\in
H^1_0(D(a_j,\ell))$, $\|f_j\|_\infty\leq 1$ and, using \eqref{eq24.3(2)},
\begin{align}
&\mathcal E_0\big(h\,\psi\,e^{-i\kappa H\alpha_j},\Fb; D(a_j,\ell)\big)\nonumber\\
&\quad=\mathcal E_0\big(f_j,\Fb-\nabla\phi_j; D(a_j,\ell)\big)\nonumber\\
&\quad\geq
(1-\delta)\mathcal E_0\big(f_j,|\nabla B_0(x_j)|\Ab_{{\rm app},\nu_j}(x-a_j); D(a_j,\ell)\big)
-C\left(\delta\kappa^2+\delta^{-1}\kappa^2H^2\ell^6\right)\int_{D(a_j,\ell)}|f_j|^2\,dx\,.\label{eq:lb-F-A0}
\end{align}
We will use Theorem~\ref{thm-tdl-e} to
get a lower bound of the energy
$$\mathcal E_0\big(f_j,|\nabla
B_0(x_j)|\Ab_{{\rm app},\nu_j}(x-a_j); D(a_j,\ell)\big)\,.$$
Define
\begin{equation}\label{eq:Lj}
L=L_j=|\nabla B_0(x_j)|\,\frac{H}{\kappa^2}\,.\end{equation}
Performing the translation $x\mapsto x+a_j$, we get that
\begin{equation}\label{eq:GLj}
\mathcal E_0\big(f_j,|\nabla B_0(x_j)|\Ab_{{\rm
app},\nu_j}(x-a_j); D(a_j,\ell)\big)=\mathcal G(f_j) \geq
\Er(\kappa,L;\ell)\,.\end{equation} Here $\mathcal G$ is the
functional in \eqref{eq-GL-r-new} and $\Er(\kappa,L;\ell)$ is the
ground state energy in \eqref{eq-gs-Er}.\\
Let $$R=L^{1/3}\kappa\ell \,.$$
 Now, Theorems~\ref{thm-tdl-e} and
\ref{thm-FK} applied successively  tell us that
$$\mathcal E_0\big(f_j,|\nabla
B_0(x_j)|\Ab_{{\rm app},\nu_j}(x-a_j); D(a_j,\ell)\big)\geq
\er(\nu,L;R)\geq 2R\,E(L)=L^{1/3}\kappa\ell\,E(L)\,.$$ Recall the
definition of $L$ in \eqref{eq:Lj}. We insert the aforementioned
estimate into \eqref{eq:GLj}. In that way, we infer from
\eqref{eq:lb-F-A0} and \eqref{eq:lb-A-F} the lower bound of
Proposition~\ref{prop:lb}.
\end{proof}

\begin{prop}\label{prop:lb-bnd0}
For $r>0$,  $h\in C^\infty(\R^2)$ satisfying
$\|h\|_\infty\leq 1\,,$  and $(\psi,\Ab)$   a critical point of the functional in
\eqref{eq-3D-GLf},  the following lower bound holds,
\begin{equation}\label{bnd2} \mathcal E_0\big(h\,\psi,\Ab;D(a_j,r)\cap\Omega\big)\geq
- \pi \kappa^2r^2\,.
\end{equation}
\end{prop}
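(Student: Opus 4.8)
The plan is to discard the two manifestly nonnegative contributions to $\mathcal E_0$ and then control the remaining negative term by a trivial volume bound. First I would write out
$$\mathcal E_0\big(h\,\psi,\Ab;D(a_j,r)\cap\Omega\big)=\int_{D(a_j,r)\cap\Omega}\Big(|(\nabla-i\kappa H\Ab)(h\psi)|^2-\kappa^2|h\psi|^2+\tfrac{\kappa^2}2|h\psi|^4\Big)\,dx,$$
and observe that the magnetic kinetic term $|(\nabla-i\kappa H\Ab)(h\psi)|^2$ and the quartic term $\tfrac{\kappa^2}{2}|h\psi|^4$ are pointwise nonnegative. Dropping both of them gives
$$\mathcal E_0\big(h\,\psi,\Ab;D(a_j,r)\cap\Omega\big)\geq -\kappa^2\int_{D(a_j,r)\cap\Omega}|h\psi|^2\,dx.$$

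Next I would invoke the a priori bound $\|\psi\|_\infty\leq 1$ from Proposition~\ref{prop:FH-b} together with the hypothesis $\|h\|_\infty\leq 1$ to conclude that $|h\psi|\leq 1$ everywhere, so the integrand above is at most $1$ and the integral is bounded by the Lebesgue measure of the domain of integration. Since $D(a_j,r)\cap\Omega\subset D(a_j,r)$, this measure is at most $\pi r^2$, whence
$$\mathcal E_0\big(h\,\psi,\Ab;D(a_j,r)\cap\Omega\big)\geq -\kappa^2\,|D(a_j,r)\cap\Omega|\geq -\pi\kappa^2r^2,$$
which is exactly the claimed inequality \eqref{bnd2}.

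There is no real obstacle here: the only external ingredient is the sup-norm bound $\|\psi\|_\infty\le 1$, which holds for every critical point by the maximum-principle argument recalled in Proposition~\ref{prop:FH-b}. The estimate is deliberately crude — it is meant to be applied in the lower bound argument only on the small collection of boundary discs $D(a_j,r)$ that meet $\partial\Omega$, where a precise reduced-energy lower bound is unavailable but the total contribution of such discs is, by \eqref{eq:lenG}, negligible compared with the main term.
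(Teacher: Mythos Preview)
Your proof is correct and follows essentially the same approach as the paper: drop the nonnegative kinetic and quartic terms, then use $\|h\|_\infty\le 1$ and $\|\psi\|_\infty\le 1$ to bound the remaining integral by the area $\pi r^2$.
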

\begin{proof}
Notice that all terms in $\mathcal
E_0\big(h\,\psi,\Ab;D(a_j,r)\cap\Omega\big)$ are positive except the
integral of $|h\psi|^2$. Thus,
$$\mathcal
E_0\big(h\,\psi,\Ab;D(a_j,r)\cap\Omega\big)\geq-\kappa^2\int_{\Omega\cap D(a_j,r)} |h\psi|^2\,dx\,.$$
This finishes the proof of the proposition upon using
$\|h\psi\|_\infty\leq 1$ and $\|\psi\|_\infty\leq1\,$.
\end{proof}

\begin{thm}\label{thm:lb}
Let $\Lambda>0$ and $\epsilon:\R\to\R_+$ such that
$\displaystyle\lim_{\kappa\to
\infty}\kappa\,\epsilon(\kappa)=\infty$
 and $\displaystyle\lim_{\kappa\to \infty}\epsilon(\kappa)=0\,$.

There exist $\kappa_0>0$ and a function ${\rm err}:\R\to \R$ such
that the following is true:
\begin{enumerate}
\item $\displaystyle\lim_{\kappa\to \infty}{\rm err}(\kappa)=0$\,.
\item  Let  $D\subset \Omega$ be a regular open set,
$h\in C^\infty(\overline{D})$, $\|h\|_\infty\leq 1$,
$(\psi,\Ab)$  a critical point of the functional in
\eqref{eq-3D-GLf}, $\kappa\geq\kappa_0$ and
$\epsilon(\kappa)\kappa^2 \leq H\leq \Lambda\kappa^2$.
\begin{enumerate}
\item If $H\gg \kappa^{3/2}$, then,
\begin{equation}\label{lb1}
\mathcal E_0(h\psi,\Ab;D)\geq
\kappa\left(\int_{\Gamma\cap D}\left(|\nabla
B_0(x)|\frac{H}{\kappa^2}\right)^{1/3}\,E\left(|\nabla
B_0(x)|\frac{H}{\kappa^2}\right)\,ds(x)\right)+\frac{\kappa^3}{H}{\rm err}(\kappa)
\,.
\end{equation}
\item If $H\lesssim\kappa^{3/2}$, then,
\begin{equation}\label{lb1*}
\mathcal E_0(h\psi,\Ab;D)\geq \kappa^2\int_{D}
g\left(\frac{H}{\kappa}|B_0(x)|\right)\,dx+\frac{\kappa^3}{H}{\rm
err}(\kappa) \,.
\end{equation}
\end{enumerate}
\end{enumerate}
\end{thm}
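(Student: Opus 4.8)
The plan is to prove the lower bounds \eqref{lb1} and \eqref{lb1*} by the usual localization--gluing scheme, matching the energy on small cells to the effective models studied above: the reduced strip energy $\er(L;R)$ and the function $E(L)$ in the first regime, and the constant--field energy $g(b)$ in the second. The common first step is localization near $\Gamma$. By Theorem~\ref{thm:exdec} together with Proposition~\ref{thm:ob}, setting $t(x)=\dist(x,\Gamma)$ one has $\int_{\{t(x)\ge\rho\}}|\psi|^2\,dx\le C\,e^{-2m_0\frac H\kappa\rho}\,\frac\kappa H$ for every $\rho>0$; choosing $\rho=\omega(\kappa)\frac\kappa H$ with $\omega(\kappa)\to\infty$ (as slowly as needed below) and using that every term of $\mathcal E_0$ except $-\kappa^2|h\psi|^2$ is nonnegative, the contribution of $\{t(x)\ge\rho\}$ to $\mathcal E_0(h\psi,\Ab;D)$ is $\ge-\kappa^2\int_{\{t\ge\rho\}}|\psi|^2=\frac{\kappa^3}H\,o(1)$. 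Hence it remains to bound from below the contribution of the thin tube $\{t(x)<\rho\}\cap D$; the finitely many points of $\Gamma\cap\partial\Omega$ (see \eqref{eq:ass:Gam}) and of $\partial D\cap\Gamma$ are each enclosed in a disk of radius $\gg\rho$ tending to $0$, on which Proposition~\ref{prop:lb-bnd0} (and $|h\psi|\le 1$) gives a bound $\ge-C\kappa^2(\mathrm{radius})^2=\frac{\kappa^3}H\,o(1)$.

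\noindent\textbf{First regime $H\gg\kappa^{3/2}$.} Cover $\Gamma\cap D$ by disks $D(a_j,\ell)$ with $a_j\in\Gamma\cap D$, of bounded overlap and the number of disks being $\approx\ell^{-1}$, as in Step~2 of the proof of Proposition~\ref{prop:ub}, taking the length scale $\ell$ with $\rho\ll\ell\ll\min(1,H^{-1/3},(\kappa/H)^{1/2})$ (possible because $H\gg\kappa^{3/2}$ forces $\kappa/H\ll\min(H^{-1/3},(\kappa/H)^{1/2})$, once $\omega$ is slow enough), so that $\bigcup_j D(a_j,\ell)$ covers $\{t<\rho\}$ over a set exhausting $D$. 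With a partition of unity $\sum_j\chi_j^2=1$ on this union, $|\nabla\chi_j|\le C/\ell$, an IMS-type inequality gives $\mathcal E_0(h\psi,\Ab;D)\ge\sum_j\mathcal E_0(\chi_j h\psi,\Ab;D(a_j,\ell))-C\ell^{-2}\int_\Omega|\psi|^2+\frac{\kappa^3}H o(1)$, and $\ell^{-2}\int_\Omega|\psi|^2\le C\ell^{-2}\kappa/H=\frac{\kappa^3}H o(1)$ since $\ell\gg\kappa^{-1}$. On each disk apply Proposition~\ref{prop:lb} (with its cutoff taken to be $\chi_j h$), picking $x_j\in\overline{D(a_j,\ell)}\cap\Gamma$ so that $\sum_j 2\ell\,(|\nabla B_0(x_j)|\tfrac H{\kappa^2})^{1/3}E(|\nabla B_0(x_j)|\tfrac H{\kappa^2})$ overestimates the corresponding integral (recall $E\le 0$); the remainders $r_j$ sum to $\frac{\kappa^3}H o(1)$ after letting $\delta\to 0$, using $\sum_j\int_{D(a_j,\ell)}|h\psi|^2\le C\kappa/H$ and the constraints on $\ell$. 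The main term is a Riemann sum which, by the continuity and monotonicity of $E$ and the two-sided bound $E(L)\approx-(1-\lambda_0 L^{2/3})L^{-4/3}$ of Theorem~\ref{thm:Lto0}, converges to $\kappa\int_{\Gamma\cap D}(|\nabla B_0|\tfrac H{\kappa^2})^{1/3}E(|\nabla B_0|\tfrac H{\kappa^2})\,ds$ with error $\frac{\kappa^3}H o(1)$ (the sets $\bigcup_j(\Gamma\cap\overline{D(a_j,\ell)})$ and $\Gamma\cap D$ differing by a set of arc-length $o(1)$, which contributes $\frac{\kappa^3}H o(1)$ again by Theorem~\ref{thm:Lto0}). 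This proves \eqref{lb1}.

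\noindent\textbf{Second regime $H\lesssim\kappa^{3/2}$.} Now $\int_D g(\tfrac H\kappa|B_0|)$ is carried by the tube $\{|B_0|<\kappa/H\}$ of width $\approx\kappa/H$, which is much larger than the magnetic length $(\kappa H)^{-1/2}$. Cover $\{t(x)<\rho\}\cap D$ by a lattice of squares $Q_{k,\zeta}$ of side $\zeta$ with $\kappa^{-1}\ll\zeta\ll\min(\kappa/H,H^{-1/2})$, e.g.\ $\zeta=\kappa^{-1/16}H^{-1/2}$ as in Theorem~\ref{thm:up*}, and localize as before. On $Q_{k,\zeta}$ replace $\Ab$ by $B_0(x_k)\Ab_0(\cdot-a_k)$ modulo a gradient, the discrepancy being $\lesssim\zeta^2+H^{-1}(\kappa/H)^{1/2}\zeta^\alpha$ by Propositions~\ref{prop:guage} and~\ref{thm:oe}; a Cauchy--Schwarz splitting then produces, besides errors that sum (against $\int_\Omega|\psi|^2\le C\kappa/H$) to $\frac{\kappa^3}H o(1)$ for the above $\zeta$, the rescaled constant-field energy $\tfrac1{b_k}F_{b_k,Q_{r_k}}$ with $b_k=\tfrac H\kappa|B_0(x_k)|$ and $r_k=\zeta\sqrt{\kappa H|B_0(x_k)|}$. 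Bounding this from below by $\tfrac1{b_k}e_N(b_k,r_k)$ and invoking \eqref{eq:g'} gives $\kappa^2\zeta^2 g(b_k)-C\kappa\zeta$ per square; choosing $x_k\in\overline{Q_{k,\zeta}}$ where $|B_0|$ is smallest makes $\sum_k\kappa^2\zeta^2 g(b_k)\ge\kappa^2\int_D g(\tfrac H\kappa|B_0|)$ (as $g$ is nonincreasing on $[0,1]$ and vanishes beyond $1$), while the $C\kappa\zeta$'s sum over the $O((\kappa/H)\omega/\zeta^2)$ relevant squares to $\frac{\kappa^3}H o(1)$. Finally the squares meeting a sub-tube $\{|B_0|<\delta\tfrac\kappa H\}$ — on which $r_k<1$ and \eqref{eq:g'} is unavailable — are treated trivially: their union lies in $\{|B_0|<2\delta\tfrac\kappa H\}$ (area $\approx\delta\kappa/H$), so there $\mathcal E_0\ge-\kappa^2\int|\psi|^2\ge-C\delta\tfrac{\kappa^3}H$, which matches $\kappa^2\int g(\tfrac H\kappa|B_0|)\ge-C\delta\tfrac{\kappa^3}H$ (since $g(b)=-\tfrac12+o(1)$ as $b\to0$) up to a loss $\le C\delta\tfrac{\kappa^3}H$ that disappears on letting $\delta\to0$. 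This proves \eqref{lb1*}.

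\noindent The substance of the proof is the bookkeeping: one must choose the single length scale ($\ell$, resp.\ $\zeta$) so that \emph{all} error terms — the gauge approximations, the finite-size corrections $C\sqrt b/r$ of \eqref{eq:g'}, the IMS localization errors, and the Riemann-sum discrepancy — are simultaneously $o(\kappa^3/H)$. This is tight exactly at the junction $H\approx\kappa^{3/2}$ of the two regimes, which accounts for the residual gap between \eqref{lb1} and \eqref{lb1*}; and in the first regime it relies on the blow-up rate $E(L)\sim L^{-4/3}$ from Theorem~\ref{thm:Lto0} to keep the Riemann sum bounded as $L=|\nabla B_0|H/\kappa^2\to0$.
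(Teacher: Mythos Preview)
Your proof follows essentially the same route as the paper's: an IMS localization near $\Gamma$ (the paper does this through a two-region partition $D_1/D_2$, you do it by first invoking Theorem~\ref{thm:exdec} and then covering the remaining tube), followed in the first regime by an application of Proposition~\ref{prop:lb} on disks centered on $\Gamma$ at scale $\ell\sim H^{-1/3}$, and in the second regime by a square decomposition at scale $\zeta\sim H^{-1/2}$ together with the constant-field estimate \eqref{eq:g'}. The parameter bookkeeping you outline is the same as the paper's, and your treatment of the boundary/intersection contributions via Proposition~\ref{prop:lb-bnd0} matches as well.

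There is one concrete slip in your second regime. You write that $g$ is nonincreasing and therefore choose $x_k$ to be a point where $|B_0|$ is \emph{smallest} on $\overline{Q_{k,\zeta}}$. In fact $g$ is nondecreasing on $[0,1]$ (from $g(0)=-\tfrac12$ up to $g(1)=0$; this is how the paper uses it in both Theorem~\ref{thm:up*} and in the present proof). For the lower Riemann sum inequality
\[
\sum_k \kappa^2\zeta^2\,g\!\left(\tfrac H\kappa|B_0(x_k)|\right)\ \ge\ \kappa^2\int_D g\!\left(\tfrac H\kappa|B_0(x)|\right)dx
\]
to hold, you need $g(b_k)\ge g\big(\tfrac H\kappa|B_0(x)|\big)$ for every $x\in Q_{k,\zeta}$, which with $g$ increasing forces $|B_0(x_k)|=\max_{\,\overline{Q_{k,\zeta}}}|B_0|$, not the minimum. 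This is exactly the choice the paper makes. Once you swap ``smallest'' for ``largest'' (and drop the claim that $g$ is nonincreasing), the argument goes through.
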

\begin{proof}
Consider three parameters
$$a\in\,(0,1)\,,\quad \ell\in\,(0,1)\,,\quad \delta\in\,(0,1)\,,$$
and define the following sets,
\begin{align*}
&D_{1}=\{x\in\Omega~:~{\rm dist}(x,\Gamma)< 2\sqrt{a}\,\ell\}\,,\\
&D_{2}=\{x\in\Omega~:~{\rm dist}(x,\Gamma)> \sqrt{a}\,\ell\}\,.\\
\end{align*}
Let $(\chi_j)$ be a partition of unity satisfying
$$ \sum_{j=1}^2\chi_j^2=1\,,\quad \sum_{j=1}^2|\nabla\chi_j|^2 \leq
C(a\ell^2)^{ -1}\,,\quad {\rm supp}\,\chi_j\subset D_j\quad(j\in\{1,2\})\,.$$
There holds the following decomposition of the energy,
$$
\mathcal E_0(h\psi,\Ab;D)\geq \mathcal
E_0(\chi_1h\psi,\Ab;D_1)+\mathcal
E_0(\chi_2h\psi,\Ab;D_j)-\sum_{j=1}^2\int_\Omega|\nabla
\chi_j|^2\,|h\psi|^2\,dx\,.
$$
The error terms are controlled using the pointwise bounds on $|h|$,
$|\psi|$, $|\nabla\chi_j|$,  and the conditions on the support of
$\chi_j$. We obtain the following lower bound,
\begin{equation}\label{eq:lb-IMS}
\mathcal E_0(h\psi,\Ab;D)\geq \mathcal
E_0(\chi_1h\psi,\Ab;D_1)+\mathcal
E_0(\chi_2h\psi,\Ab;D_2)
-C(\sqrt{a}\,\ell)^{-1}\,.
\end{equation}
~\\
\subsection*{The regime $H\gg\kappa^{3/2}$}~\\
In this regime, we shall see that  $\mathcal
E_0(\chi_1h\psi,\Ab;D_1)$ is the leading term and
$\mathcal E_0(\chi_2h\psi,\Ab;D_2)$ is an error term.\\

\subsubsection*{Lower bound of the term $\mathcal
E_0(\chi_1h\psi,\Ab;D_1)$.}~\\
Consider a constant $c>0$ and distinct points $(a_j)$ in $\Gamma$
such that,
$$\forall~j\,,\quad \ell-a\ell\leq {\rm dist}(a_j,a_{j+1})\leq
\ell-a\ell\,.$$
Choose the constant $a$ sufficiently small so that
$$D_1=\{x\in\Omega~:~{\rm dist}(x,\Gamma)<
2\sqrt{a}\,\ell\}\subset \bigcup_jD(a_j,\ell)\,.$$ Consider a partition of unity satisfying
$$ \sum_jf_j^2=1~{\rm in~}D_1\,,\quad{\rm supp}\,f_j\subset D(a_j,\ell)\,,\quad
\sum_j|\nabla f_j|^2\leq \frac{C}{a^2\ell^2}\,.$$ Notice that
the support of each $\nabla f_j$ is in $D(a_j,\ell)\cap
D(a_{j+1},\ell)\cap D(a_{j-1},\ell)$ and that the points $(a_j)$ are
selected such that the last domain  has an area proportional to
$\sqrt{a}\,\ell\times a\ell=a\sqrt{a}\,\ell^2$.

The partition of unity $(f_j)$ allows us to decompose the energy as
follows,
\begin{align}\label{eq:lb-bnd1}
\mathcal E_0(\chi_1h\psi,\Ab;D_1)&\geq \sum_j\mathcal E_0(f_j\,\chi_1h\psi,\Ab;D_1)-\sum_j\big\|\,|\nabla f_j|\,\chi_1h\psi\big\|_2^2\nonumber\\
&\geq \sum_j\mathcal E_0(h_j\,\psi,\Ab;D_1)-\frac{C}{\sqrt{a}\,\ell}\,,
\end{align}
where $h_j=f_j\,\chi_1h\psi$ is supported in $D\cap D(a_j,\ell)\,$. \\
If
$D(a_j,\ell)\cap\partial\Omega\not=\emptyset$, then we can apply
Proposition~\ref{prop:lb-bnd0}. Since $\Gamma\cap\partial\Omega$ is
a finite set, then we get,
$$\sum_{D(a_j,\ell)\cap\partial\Omega\not=\emptyset}\mathcal E_0(h_j\,\psi,\Ab;D_1)\geq
-C\kappa^2\ell^2\,.$$
We select the parameter $\ell$ as follows,
\begin{equation}\label{eq:lb-ell}
\ell=\delta H^{-1/3}\,.
\end{equation}%
In that way, we obtain,
$$\ell\ll1\,,\quad\kappa^2\ell^2\ll\frac{\kappa^3}{H}\,,\quad \frac1{\sqrt{a}\,\ell}\ll\frac{\kappa^3}{H}\,,$$
and
\begin{equation}\label{eq:lb-1}
\sum_{D(a_j,\ell)\cap\partial\Omega\not=\emptyset}\mathcal E_0(h_j\,\psi,\Ab;D_1)\geq
\frac{\kappa^3}{H}\,o(1)\quad(\kappa\to\infty)\,.
\end{equation}
 If $D(a_j,\ell)\subset
D^c$, then $h_j=0$ and
$$\mathcal E_0(h_j\,\psi,\Ab;D_1)=0\,.$$
Now,  if $j\in \mathcal I=\{j~:~D(a_j,\ell)\subset\Omega~{\rm
and}~D(a_j,\ell)\cap D\not=\emptyset\}$, then
we can apply Proposition~\ref{prop:lb} and get,
\begin{multline*}
\sum_{j\in\mathcal I}\mathcal E_0(h_j\,\psi,\Ab;D_1)\geq
(1-\delta)\,2\ell\,\kappa\sum_{j\in\mathcal I}\left(|\nabla
B_0(x_j)|\frac{H}{\kappa^2}\right)^{1/3}\,E\left(|\nabla
B_0(x_j)|\frac{H}{\kappa^2}\right) \\
-C\left(\delta\kappa^2+\delta^{-1}\left(\frac{\kappa^3}{H}\ell^{2\alpha}+\kappa^2H^2\ell^6\right)\right)\int_\Omega|h\psi|^2\,dx\,,\end{multline*}
where, for all $j$, $x_j$ is an arbitrary point
in $\overline{D(a_j,\ell)}$.\\
Thanks to Proposition~\ref{thm:ob} and the choice of $\ell$ in
\eqref{eq:lb-ell}, we get further,
\begin{multline*}
\sum_{j\in\mathcal I}\mathcal E_0(h_j\,\psi,\Ab;D_1)\geq
(1-\delta)\,2\ell\,\kappa\sum_{j\in\mathcal I}\left(|\nabla
B_0(x_j)|\frac{H}{\kappa^2}\right)^{1/3}\,E\left(|\nabla
B_0(x_j)|\frac{H}{\kappa^2}\right) \\
-C\left(\delta+\delta^{2\alpha-1}\frac\kappa H\,H^{-2\alpha/3}\right)\frac{\kappa^3}{H}\,.
\end{multline*}
Theorem~\ref{thm:Lto0} allows us to write,
\begin{equation}\label{eq:thmLto0}
\left(|\nabla
B_0(x_j)|\frac{H}{\kappa^2}\right)^{1/3}\,E\left(|\nabla
B_0(x_j)|\frac{H}{\kappa^2}\right)\leq
C\frac{\kappa^2}{H}\,.\end{equation} Consequently,
\begin{multline*}
\sum_{j\in\mathcal I}\mathcal E_0(h_j\,\psi,\Ab;D_1)\geq
2\ell\,\kappa\sum_{j\in\mathcal I}\left(|\nabla
B_0(x_j)|\frac{H}{\kappa^2}\right)^{1/3}\,E\left(|\nabla
B_0(x_j)|\frac{H}{\kappa^2}\right)
\\-C\left(\delta+\delta^{2\alpha-1}\frac\kappa H\,H^{-2\alpha/3}\right)\frac{\kappa^3}{H}\,.\end{multline*}
Inserting this and \eqref{eq:lb-1} into \eqref{eq:lb-bnd1}, and using that $(\sqrt{a}\,\ell)^{-1}\ll\frac{\kappa^3}{H}$, we get,
\begin{multline}\label{eq:lb-Rsum} \mathcal
E_0(\chi_1h\psi,\Ab;D_1)\geq \kappa\sum_{j\in\mathcal I}2\ell\left(|\nabla
B_0(x_j)|\frac{H}{\kappa^2}\right)^{1/3}\,E\left(|\nabla
B_0(x_j)|\frac{H}{\kappa^2}\right)\\
 -C\left(\delta+\delta^{2\alpha-1}\frac\kappa H\,H^{-2\alpha/3}\right)\frac{\kappa^3}{H}\,.\end{multline} Thanks to \eqref{eq:thmLto0},
the sum
\begin{equation}\label{eq:Rsum'}
\sum_{j\in\mathcal I}2\ell\left(|\nabla
B_0(x_j)|\frac{H}{\kappa^2}\right)^{1/3}\,E\left(|\nabla
B_0(x_j)|\frac{H}{\kappa^2}\right)
\end{equation}
is of order $\kappa^2/H$. Let $\eta>0$. Select $\ell_0$ sufficiently
small such that, for all $\ell\in (0,\ell_0)$, the arc-length
measure of $D(a_j,\ell)\cap\Gamma$ along $\Gamma$ satisfies,
$$2\ell-\ell\frac{\eta}2\leq |D(a_j,\ell)\cap\Gamma|\leq
2\ell+\ell\frac{\eta}2\,.$$ Thus, replacing $2\ell$ by
$|D(a_j,\ell)\cap\Gamma|$ in the sum in \eqref{eq:Rsum'} produces an
error of order $\eta\ell$. Now, select
$x_j\in\overline{D(a_j,\ell)}$ such that
$$|\nabla
B_0(x_j)|^{1/3}\,E\left(|\nabla
B_0(x_j)|\frac{H}{\kappa^2}\right)=\max_{\overline
{D(a_j,\ell)}}|\nabla B_0(x)|^{1/3}\,E\left(|\nabla
B_0(x)|\frac{H}{\kappa^2}\right)\,.$$ In that way, the sum in
\eqref{eq:Rsum'} satisfies,
\begin{multline}\label{eq:sum=int}
\sum_{j\in\mathcal I}2\ell\left(|\nabla
B_0(x_j)|\frac{H}{\kappa^2}\right)^{1/3}\,E\left(|\nabla
B_0(x_j)|\frac{H}{\kappa^2}\right)\\
\geq \sum_{j\in\mathcal
I}\left(\int_{D(a_j,\ell)\cap\Gamma}\left(|\nabla
B_0(x_j)|\frac{H}{\kappa^2}\right)^{1/3}\,E\left(|\nabla
B_0(x_j)|\frac{H}{\kappa^2}\right)\,dx\right)-C\eta\frac{\kappa^2}{H}\,.
\end{multline}
Recall \eqref{eq:thmLto0}. Since the balls $\big(D(a_j,\ell)\big)$
overlap in a region of length $\mathcal O(a\ell)$, and the number of
these balls is inversely proportional to $\ell$, then
\begin{multline*}
\sum_{j\in\mathcal I}\left(\int_{D(a_j,\ell)\cap\Gamma}\left(|\nabla
B_0(x_j)|\frac{H}{\kappa^2}\right)^{1/3}\,E\left(|\nabla
B_0(x_j)|\frac{H}{\kappa^2}\right)\,dx\right)\\
\geq \int_{D\cap\Gamma}\left(|\nabla
B_0(x_j)|\frac{H}{\kappa^2}\right)^{1/3}\,E\left(|\nabla
B_0(x_j)|\frac{H}{\kappa^2}\right)\,dx-Ca\frac{\kappa^2}{H}\,.
\end{multline*}
Inserting this into \eqref{eq:sum=int}, then inserting the resulting
inequality into \eqref{eq:lb-Rsum}, we get,
\begin{multline}
\mathcal E_0(\chi_1h\psi,\Ab;D_1)\geq \kappa\int_{\Gamma\cap D}\left(|\nabla
B_0(x)|\frac{H}{\kappa^2}\right)^{1/3}\,E\left(|\nabla
B_0(x)|\frac{H}{\kappa^2}\right)\,ds(x)\\
 -C\left(a+\delta+\delta^{2\alpha-1}\frac\kappa H\,H^{-2\alpha/3}+\eta\right)\frac{\kappa^3}{H}\,.
\end{multline}
Recall that $\alpha>0$. Taking $\kappa\to\infty$, we get,
\begin{multline*}
\liminf_{\kappa\to\infty}\frac{H}{\kappa^3}\left\{\mathcal
E_0(\chi_1h\psi,\Ab;D_1) -\kappa\int_{\Gamma\cap D}\left(|\nabla
B_0(x)|\frac{H}{\kappa^2}\right)^{1/3}\,E\left(|\nabla
B_0(x)|\frac{H}{\kappa^2}\right)\,ds(x)\right\}\\
\geq
-C(a+\delta+\eta)\,. \end{multline*} Taking $\eta\to0_+$, we obtain,
$$\liminf_{\kappa\to\infty}\frac{H}{\kappa^3}\left\{\mathcal E_0(\chi_1h\psi,\Ab;D_1)
-\kappa\int_{\Gamma\cap D}\left(|\nabla
B_0(x)|\frac{H}{\kappa^2}\right)^{1/3}\,E\left(|\nabla
B_0(x)|\frac{H}{\kappa^2}\right)\,ds(x)\right\}\geq -C(a+\delta)\,,$$ i.e. when $\kappa$ is sufficiently large,
\begin{equation}\label{eq:lb-bnd2}
\mathcal E_0(\chi_1h\psi,\Ab;D_1)\geq \kappa\int_{\Gamma\cap D}\left(|\nabla
B_0(x)|\frac{H}{\kappa^2}\right)^{1/3}\,E\left(|\nabla
B_0(x)|\frac{H}{\kappa^2}\right)\,ds(x)  -2C(\delta+a)\frac{\kappa^3}H\,.
\end{equation}
~\\
\subsubsection*{Lower bound of the term $\mathcal
E_0(\chi_2h\psi,\Ab;D_2)$}~\\
%
%
Since $H\gg\kappa^{3/2}$, the parameter $\ell$ defined in
\eqref{eq:lb-ell} satisfies,
$$
\ell=\delta H^{-1/3}=\delta\frac \kappa H \frac{H^{1/3}}\kappa\gg \frac{\kappa}H\,.
$$
Thanks to the exponential decay in Theorem~\ref{thm:exdec}, there
holds,
$$\kappa^2\int_{D_2}|\psi|^2\,dx\ll\frac{\kappa^3}H\,,
$$ and
\begin{equation}\label{eq:lb-enD2-C1}
\mathcal E_0(\chi_2h\psi,\Ab;D_2)\geq -\frac{\kappa^3}H \,o(1)\quad(\kappa\to\infty)\,.
\end{equation}
Inserting \eqref{eq:lb-enD2-C1} and \eqref{eq:lb-bnd2} into
\eqref{eq:lb-IMS}, we get,
$$
\mathcal E_0(h\psi,\Ab;D)\geq \kappa\int_{\Gamma\cap D}\left(|\nabla
B_0(x)|\frac{H}{\kappa^2}\right)^{1/3}\,E\left(|\nabla
B_0(x)|\frac{H}{\kappa^2}\right)\,ds(x)  -C\big(\delta+a+o(1)\big)\frac{\kappa^3}H\,.
$$
Taking the successive limits
$$\liminf_{\kappa\to\infty}\,,\quad \lim_{\delta\to0_+}\,,\quad
\lim_{a\to0+}\,,$$ we arrive at
\begin{equation}\label{eq:lb-conc-C1}
\mathcal E_0(h\psi,\Ab;D)\geq \kappa\int_{\Gamma\cap D}\left(|\nabla
B_0(x)|\frac{H}{\kappa^2}\right)^{1/3}\,E\left(|\nabla
B_0(x)|\frac{H}{\kappa^2}\right)\,ds(x)  -\frac{\kappa^3}H\,o(1)\,.
\end{equation}
This finishes the proof of Theorem~\ref{thm:lb} in the case $H\gg
\kappa^{3/2}$.\\
\subsection*{The regime $H\lesssim \kappa^{3/2}$.}~\\
In this regime,  we shall see that   $\mathcal
E_0(\chi_1h\psi,\Ab;D_1)$ is an error term and $\mathcal
E_0(\chi_2h\psi,\Ab;D_2)$ is the leading term.

Since $ H\lesssim\kappa^{3/2}$, the parameter $\ell$ introduced in
\eqref{eq:lb-ell} satisfies
$$\ell\lesssim \delta\frac\kappa H\,.$$
Consequently, we have,
\begin{equation}\label{eq:lb-1*}
\mathcal E_0(\chi_1h\psi,\Ab;D_1)\geq -\kappa^2\int_\Omega|\chi_1h\psi|^2\,dx
\geq C \kappa^2\ell \geq -\delta\frac{\kappa^3}{H}\,.
\end{equation}
Unlike the regime $H\gg\kappa^{3/2}$, we can no more ignore the
energy in $\{\sqrt{a}\,\ell\leq {\rm dist}(x,\Gamma)\leq
\frac{\kappa}H\}$.\\
We introduce the two parameters,
\begin{equation}\label{eq:lb-m}
m>1\quad {\rm and}\quad \zeta\in(0,\ell)\,,
\end{equation}
and the domain,
\begin{equation}\label{eq:lb-dom}
U=\big\{\,x\in D_2~:~{\rm dist}(x,\Gamma)\geq m\frac\kappa H\,\big\}\,.
\end{equation}
Thanks to the exponential decay in Theorem~\ref{thm:exdec}, we get,
$$\kappa^2\int_{U}|\psi|^2\,dx\leq
C{e^{-2m\,m_0}}\, \frac{\kappa^3}H\,.
$$ and
\begin{equation}\label{eq:lb-enU}
\mathcal E_0(\chi_2h\psi,\Ab;U)\geq -C{e^{-2m\,m_0}}\,\frac{\kappa^3}H\,.
\end{equation}
Our next task is to determine a lower bound of the energy $\mathcal
E_0(\chi_2h\psi,\Ab;D_2\setminus U)$. Consider  for $\zeta\in (0,1)$ the lattice of
squares $(\mathcal Q_{k,\zeta})_k$ generated by the square
$$\mathcal Q_\zeta=\big(-\frac\zeta2,\frac\zeta2\big)\times
\big(-\frac\zeta2,\frac\zeta2\big)\,.$$
Let
\begin{align*}
&\mathcal J_{\rm blk}=\{k~:~\mathcal Q_{k,\zeta}\subset
D_2\setminus U\quad{\rm and}~\mathcal
Q_{k,\zeta}\cap\partial\Omega=\emptyset\}\,,\\
&\mathcal J_{\rm bnd,1}=\{k~:~\mathcal Q_{k,\zeta}\cap
(D_2\setminus U)\not=\emptyset\quad{\rm and}\quad \mathcal
Q_{k,\zeta}\cap\partial\Omega=\emptyset\}\,,\\
&\mathcal J_{\rm bnd,2}=\{k~:~\mathcal Q_{k,\zeta}\subset
D_2\setminus U\quad{\rm and}~\mathcal
Q_{k,\zeta}\cap\partial\Omega\not=\emptyset\}\,.
\end{align*} We have the obvious
decomposition,
\begin{equation}\label{eq:lb-dec}
\mathcal
E_0(\chi_2h\psi,\Ab;D_2\setminus U)\geq \sum_{k\in \mathcal J_{\rm blk}}\mathcal
E_0(\chi_2h\psi,\Ab;\mathcal Q_{k,\zeta})+\sum_{j=1}^2\sum_{k\in\mathcal J_{{\rm bnd},j}}\mathcal
E_0(\chi_2h\psi,\Ab;\mathcal Q_{k,\zeta}\cap(D_2\setminus U))\,.
\end{equation}
Since $\Gamma\cap\partial\Omega$ is a finite set, then $N={\rm
Card}\mathcal J_{\rm bnd,2}$ is bounded independently of $\kappa$.
Now, the terms corresponding to $k\in\mathcal J_{{\rm bnd},j}$ are
easily estimated as follows,
\begin{equation}\label{eq:lb-dec1}
\sum_{j=1}^2\sum_{k\in\mathcal J_{{\rm bnd},j}}\mathcal
E_0(\chi_2h\psi,\Ab;\mathcal Q_{k,\zeta}\cap(D_2\setminus U))\geq -\kappa^2\int_{\{{\rm dist}(x,\partial\Gamma)\leq C\zeta}|\psi|^2\,dx-N\kappa^2\zeta^2
\geq -C\kappa^2\zeta\,.
\end{equation}
 For all $k$, let $x_k$ be the center of the
square $\mathcal Q_{k,\zeta}$ and $a_k$ an arbitrary point in
$\overline{\mathcal Q_{k,\zeta}}$. Repeating the proof of
Proposition~\ref{prop:lb}, we get, for all $k\in\mathcal J$ and
$\eta\in(0,1)$,
\begin{multline}\label{eq:lb-ensq}
\mathcal E_0(\chi_2h\psi,\Ab;\mathcal Q_{k,\zeta})\geq
(1-\eta)\mathcal E_0(\chi_2h\psi\,e^{-i\kappa H
u_k},B_0(a_k)\Ab_0(x-x_k);\mathcal Q_{k,\zeta})
\\-C\left(\eta\kappa^2+\eta^{-1}\left(\frac{\kappa^3}{H}\zeta^{2\alpha}+\kappa^2H^2\zeta^4\right)\right)\|\psi\|_{L^2(\mathcal
Q_{k,\zeta})}^2\,,
\end{multline}
where $u_k$ is a gauge function.\\
We select the parameter $\zeta$ as follows
\begin{equation}\label{eq:lb-zeta}
\zeta=\eta H^{-1/2}\,.
\end{equation}
Clearly,  $\zeta$ satisfies,
$$ \zeta\ll\ell\lesssim \delta\frac\kappa H \ll 1\,,\quad\kappa^2\zeta\ll\frac\kappa H\,,\quad  H^2\zeta^4=\eta^4\,,\quad
\zeta\sqrt{\kappa H|B_0(a_k)|}\gtrsim \zeta\sqrt{\kappa H\sqrt{a}\,\ell}\,\gtrsim\eta\delta^{1/2} a^{1/4}\,.$$
Applying a scaling and a translation, we may use \eqref{eq:g'} and
get,
$$
\begin{array}{l}
\frac{1}{{\left(\zeta\sqrt{\kappa H|B_0(a_k)|}\right)^2} }  \mathcal E_0(\chi_2h\psi\,e^{-i\kappa H
u_k},B_0(a_k)\Ab_0(x-x_k);\mathcal
Q_{k,\zeta})\qquad \\ \quad\quad \quad\quad
\geq\frac{\kappa}{H|B_0(a_k)|}\left(g\left(\frac{H}{\kappa}|B_0(a_k)|\right)- C\frac{\sqrt{\frac{H}{\kappa}|B_0(a_k)|}}{\zeta\sqrt{\kappa
H|B_0(a_k)|}}\right)\,.
\end{array}
$$
We insert this  into \eqref{eq:lb-ensq}, sum over $k\in\mathcal
J_{\rm blk}$ and use Proposition~\ref{thm:ob} to get,
$$
\sum_{k\in\mathcal J_{\rm blk}}\mathcal E_0(\chi_2h\psi,\Ab;\mathcal Q_{k,\zeta})\geq
\zeta^2\kappa^2\sum_{k\in\mathcal J_{\rm blk}}\left(g\left(\frac{H}{\kappa}|B_0(a_k)|\right)-\frac{C}{\zeta\kappa}\right)
-\left(C\eta+o(1)\right)\frac{\kappa^3}{H}\,.
$$
The sum in the inequality above  becomes a lower Riemann sum if for each $k$  the
point $(a_k)$ is selected in $\overline{Q_{k,\ell}}$ as follows,
$$|B_0(a_k)|= \max\{|B_0(x)|~:~x\in\overline{\mathcal
Q_{k,\ell}}\}\,.$$ Notice that $\mathcal N_{\rm blk}={\rm Card}\,\mathcal J_{\rm blk}$ satisfies
$\mathcal N_{\rm blk}\times\zeta^2\approx | D_2\setminus U|$ as $\zeta\to0$ and
$$|D_2\setminus U|=|\{\sqrt{a}\,\ell\leq {\rm dist}(x,\Gamma)\leq m\frac\kappa H\}|\leq Cm\frac\kappa H\,.$$
Consequently, we have,
$$
\sum_{k\in\mathcal J_{\rm blk}}\mathcal E_0(\chi_2h\psi,\Ab;\mathcal Q_{k,\zeta})\geq
\kappa^2\int_{\mathcal D_\zeta} g\left(\frac{H}{\kappa}|B_0(x)|\right)\,dx- m\frac{C\kappa}{\zeta}\frac\kappa H-\left(C\eta+o(1)\right)\frac{\kappa^3}{H}\,,
$$
where
$$\mathcal D_{\zeta}=\bigcup_{k\in\mathcal J_{\rm blk}}\overline{\mathcal
Q_{k,\zeta}}\subset D_2\setminus U\,.$$ Since the function $g$ is non-positive, then
we get that,
\begin{align}\label{eq:lb-enD2-C2} \sum_{k\in\mathcal
J_{\rm blk}}\mathcal E_0(\chi_2h\psi,\Ab;\mathcal Q_{k,\zeta})&\geq
\kappa^2\int_{\mathcal D_2\setminus
U}g\left(\frac{H}{\kappa}|B_0(x)|\right)\,dx- Cm \, \frac{\kappa}{\zeta}\, \frac\kappa
H-\left(C\eta+o(1)\right)\frac{\kappa^3}{H}\nonumber\\
&\geq \kappa^2\int_{\mathcal D_2\setminus
U}g\left(\frac{H}{\kappa}|B_0(x)|\right)\,dx-C\, m\eta^{-1}\frac{H^{1/2}}{\kappa}\frac{\kappa^3}{H}-\left(C\eta+o(1)\right)\frac{\kappa^3}{H}\,.
\end{align}
%
We insert \eqref{eq:lb-enD2-C2} 
and \eqref{eq:lb-dec1} into
\eqref{eq:lb-dec}. Since $g\left(\frac{H}{\kappa}|B_0(x)|\right)=0$
in $\{|B_0(x)|\geq \frac H\kappa\}$ and $H\lesssim \kappa^{3/2}$, it results the inequality,
\begin{equation}\label{eq:lb-enD-C2'}
\mathcal E_0(\chi_2h\psi,\Ab;D_2\setminus U)\geq \kappa^2\int_{D}
g\left(\frac{H}{\kappa}|B_0(x)|\right)\,dx-\left(C\eta+o(1)\right)\frac{\kappa^3}{H}\,,\quad(\kappa\to\infty)\,.
\end{equation}
Combining \eqref{eq:lb-enD-C2'} and \eqref{eq:lb-enU}, we get
$$\mathcal E_0(\chi_2h\psi,\Ab;D_2)\geq
\kappa^2\int_{D}
g\left(\frac{H}{\kappa}|B_0(x)|\right)\,dx-C{e^{-2m\,m_0}} \, \frac{\kappa^3}{H}
-\left(C\eta+o(1)\right)\frac{\kappa^3}{H}\,.$$ Now, we insert this inequality and \eqref{eq:lb-1*} into \eqref{eq:lb-IMS} to get,
$$\mathcal E_0(h\psi,\Ab;D)\geq\kappa^2\int_{D}
g\left(\frac{H}{\kappa}|B_0(x)|\right)\,dx
-C\big(a+\delta+\eta+{e^{-2m\,m_0}}+o(1)\big)\frac{\kappa^3}{H}\,.$$ By taking the successive limits,
$$\liminf_{\kappa\to\infty}\,,\quad \lim_{a\to0_+}\,,\quad
\lim_{\delta\to0_+}\,,\quad \lim_{\eta\to0_+}\,,\quad
\lim_{m\to\infty}\,,$$ we get
$$\mathcal E_0(h\psi,\Ab;D)\geq\kappa^2\int_{D}
g\left(\frac{H}{\kappa}|B_0(x)|\right)\,dx
-\frac{\kappa^3}{H}\,o(1)\,,$$ which finishes the proof of Theorem~\ref{thm:lb} in the regime $H\lesssim\kappa^{3/2}$.
%
%
\end{proof}

We get by applying Theorem~\ref{thm:lb} with $D=\Omega$\,:

\begin{corol}\label{corl:lb}
Let $\Lambda>0$ and $\epsilon:\R\to\R_+$ such that
$\displaystyle\lim_{\kappa\to
\infty}\kappa\,\epsilon(\kappa)=\infty$
 and $\displaystyle\lim_{\kappa\to \infty}\epsilon(\kappa)=0\,$.

There exist $\kappa_0>0$ and a function ${\rm err}:\R\to \R$ such
that the following is true:
\begin{enumerate}
\item $\displaystyle\lim_{\kappa\to \infty}{\rm err}(\kappa)=0$\,.
\item  Let $\kappa\geq\kappa_0$,
$\epsilon(\kappa)\kappa^2 \leq H\leq \Lambda\kappa^2$ and
$(\psi,\Ab)$  be a critical point of the functional in
\eqref{eq-3D-GLf}.
\begin{enumerate}
\item If $H\gg \kappa^{3/2}$, then,
\begin{equation}\label{lb2}
\mathcal E_0(h\psi,\Ab)\geq
\kappa\left(\int_{\Gamma}\left(|\nabla
B_0(x)|\frac{H}{\kappa^2}\right)^{1/3}\,E\left(|\nabla
B_0(x)|\frac{H}{\kappa^2}\right)\,ds(x)\right)+\frac{\kappa^3}{H}{\rm err}(\kappa)
\,.
\end{equation}
\item If $H\lesssim\kappa^{3/2}$, then,
\begin{equation}\label{lb2*}
\mathcal E_0(h\psi,\Ab)\geq \kappa^2\int_{\Omega}
g\left(\frac{H}{\kappa}|B_0(x)|\right)\,dx+\frac{\kappa^3}{H}{\rm
err}(\kappa) \,.
\end{equation}
\end{enumerate}
\end{enumerate}
\end{corol}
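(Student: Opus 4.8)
The plan is to obtain Corollary~\ref{corl:lb} directly as the special case $D=\Omega$, $h\equiv 1$, of Theorem~\ref{thm:lb}. First I would note that $\Omega$ itself is an admissible domain for that theorem: it is bounded, simply connected and has smooth boundary (as assumed in the introduction), and by Assumption~\eqref{eq:ass:Gam} the set $\Gamma\cap\partial\Omega$ is finite, so in particular $\partial D\cap\Gamma$ is finite when $D=\Omega$. I would then take $h$ to be the constant function $1\in C^\infty(\overline\Omega)$, which trivially satisfies $\|h\|_\infty\le 1$; with this choice $\mathcal E_0(h\psi,\Ab;\Omega)=\mathcal E_0(\psi,\Ab;\Omega)=\mathcal E_0(\psi,\Ab)$ is exactly the energy appearing in \eqref{eq:GLen}. (If one prefers, the same reasoning goes through verbatim keeping a general $h\in C^\infty(\overline\Omega)$ with $\|h\|_\infty\le 1$, as in Theorem~\ref{thm:lb}.)

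Applying Theorem~\ref{thm:lb} with these choices then yields, with the same $\kappa_0$ and the same function ${\rm err}(\kappa)\to 0$, the lower bound
\[
\mathcal E_0(\psi,\Ab)\ge \kappa\int_{\Gamma\cap\Omega}\Big(|\nabla B_0(x)|\frac{H}{\kappa^2}\Big)^{1/3}\,E\Big(|\nabla B_0(x)|\frac{H}{\kappa^2}\Big)\,ds(x)+\frac{\kappa^3}{H}\,{\rm err}(\kappa)
\]
in the regime $H\gg\kappa^{3/2}$, and
\[
\mathcal E_0(\psi,\Ab)\ge \kappa^2\int_\Omega g\Big(\frac{H}{\kappa}|B_0(x)|\Big)\,dx+\frac{\kappa^3}{H}\,{\rm err}(\kappa)
\]
in the regime $H\lesssim\kappa^{3/2}$. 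The last thing I would check is that in the first display the domain of integration may be replaced by all of $\Gamma$: since $\Gamma\setminus\Omega=\Gamma\cap\partial\Omega$ is finite by \eqref{eq:ass:Gam}, it has zero arc-length measure, and since $x\mapsto|\nabla B_0(x)|$ is continuous on $\overline\Omega$ (because $B_0$ is $C^\infty$ in a neighbourhood of $\overline\Omega$) while $L\mapsto E(L)$ is continuous by Theorem~\ref{thm-FK}, the integrand is bounded and continuous along $\Gamma$, so $\int_{\Gamma\cap\Omega}(\cdots)\,ds(x)=\int_{\Gamma}(\cdots)\,ds(x)$. This converts the first display into \eqref{lb2}, while the second is already \eqref{lb2*}.

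There is no real obstacle here: the corollary is a pure specialization of Theorem~\ref{thm:lb}, and the only point deserving a word of comment is the harmless passage from $\Gamma\cap\Omega$ to $\Gamma$ in the arc-length integral, which is legitimate precisely because $\Gamma$ meets $\partial\Omega$ in a finite set.
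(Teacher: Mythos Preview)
Your proof is correct and matches the paper's own approach exactly: the paper derives the corollary in one line by applying Theorem~\ref{thm:lb} with $D=\Omega$. Your additional remark on replacing $\Gamma\cap\Omega$ by $\Gamma$ is a harmless clarification that the paper leaves implicit.
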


We conclude this section with the
\begin{proof}[Proof of Theorem~\ref{thm:HK}]
We have just to combine the conclusions of Theorem~\ref{thm:up} and
Corollary~\ref{corl:lb}.\end{proof}

\section{Local energy estimates}

\subsection{Preliminaries} Let $D\subset \Omega$ be an open set with a smooth boundary such that $\partial D\cap\Gamma$ is a finite set. Let
$\rho_0\in(0,1)$, $\rho\in(0,\rho_0)$ and
$$D_\rho=\{x\in \Omega~:~{\rm dist}(x, D)<\rho\}\,.$$
We select $\rho_0$ sufficiently small so that the boundary of
$\partial D_\rho$ is smooth.\\
Let $h_1\in C_c^\infty(D_\rho)$ and $h_2\in C^\infty(\mathbb R^2)$
be functions satisfying
$$0\leq h_1\leq 1\,,\quad|\nabla h_1| + |\nabla h_2| \leq \frac{C}\rho\quad{\rm in~}\R^2\,,\quad h_1=1\quad{\rm in}~
D_{\rho}\,,\,{\rm and}\;  h_1^2 +h_2^2 =1\,.$$
Notice that
$${\rm supp}\,h_2\subset \overline D^c\,.$$
Let $(\psi,\Ab)$ be a  minimizer of \eqref{eq-3D-GLf}. We will
estimate the following energy
$$\mathcal
E_0(\psi,\Ab;D)=\int_D\left(|\nabla-i\kappa H\Ab)\psi|^2-{\kappa^2}|\psi|^2+\frac{\kappa^2}2|\psi|^4\right)\,dx\,.$$
Notice that we have the following decomposition of the energy
$$\mathcal E_0(\psi,\Ab;\Omega)\geq \mathcal
E_0(h_1\psi,\Ab;D_\rho)+\mathcal E_0(h_2\psi,\Ab;\overline
D^c)-\frac{C}{\rho^2}\int_\Omega|\psi|^2\,dx\,.$$ Now we use the
estimate in Proposition~\ref{thm:ob} and write,
\begin{equation}\label{eq:dec}
\mathcal E_0(\psi,\Ab;\Omega)\geq \mathcal
E_0(h_1\psi,\Ab;D_\rho)+\mathcal E_0(h_2\psi,\Ab;\overline
D^c)-\frac{C}{\rho^2}\frac\kappa H\,.
\end{equation}
Recall that we deal with two separate regimes:\\
\begin{equation*}\left\{
\begin{array}{lll}
&{\bf Regime~I:}~&\kappa\ll H\lesssim\kappa^{3/2}\,;\\
&{\bf Regime~II:}~&\kappa^{3/2}\ll H\lesssim\kappa^2\,.
\end{array}
\right.
\end{equation*}
We define the
quantity $C_0(\kappa,H;D)$ as follows:
\begin{equation}\label{eq:gse-D}
C_0(\kappa,H;D)=\left\{\begin{array}{ll}
\displaystyle\kappa\left(\int_{D\cap\Gamma}\left(|\nabla
B_0(x)|\frac{H}{\kappa^2}\right)^{1/3}\,E\left(|\nabla
B_0(x)|\frac{H}{\kappa^2}\right)\,ds(x)\right)&{\rm in~}{\rm Regime~I}\,,\\
\displaystyle\kappa^2\int_{D}g\left(\frac H\kappa|B_0(x)|\right)\,dx&{\rm in~Regime~II}\,.
\end{array}\right.
\end{equation}
Notice that, in Regimes~I~and~II, the result of Theorem~\ref{thm:HK} reads as follows,
$$\E=C_0(\kappa,H;\Omega)+\frac{\kappa^3}{H}\,o(1)\,,\quad(\kappa\to\infty)\,,$$

\subsection{Upper bound}
The results in this section are valid under the assumption that
$(\psi,\Ab)$ is a minimizer of the functional in
\eqref{eq-3D-GLf}.\\
%
We have $\mathcal E_0(\psi,\Ab;\Omega)\leq \E$. Using $|\psi|\leq 1$
and the upper bound in Theorem~\ref{thm:up}, we get,
$$
\mathcal E_0(h_1\psi,\Ab;D_\rho)+\mathcal E_0(h_2\psi,\Ab;\overline D^c)
\leq C_0(\kappa,H;\Omega)+\frac{\kappa^3}{H}{\rm
err}(\kappa)+\frac{C}{\rho^2}\frac\kappa H\,.$$
Using Theorem~\ref{thm:lb}, we may write,
$$\mathcal E_0(h_2\psi,\Ab;D^c)\geq
C_0(\kappa,H;\overline D^c)+\frac{\kappa^3}{H}{\rm err}(\kappa)\,.$$
As a consequence, we have,
$$
\mathcal E_0(h_1\psi,\Ab;D_\rho) \leq
C_0(\kappa,H;D)+\frac{\kappa^3}{H}{\rm err}(\kappa)
+\frac{C}{\rho^2}\frac\kappa H\,.
$$
Since $h_1=1$ in $D$, we get the simple decomposition of the energy,
$$\mathcal E_0(\psi,\Ab;D)=\mathcal E_0(h_1\psi,\Ab;D_\rho)-\mathcal E_0(h_1\psi,\Ab;D_\rho\setminus
D)\,.$$
Since $\|h_1\|_\infty\leq 1$ and
%
 the boundary of $D_\rho\setminus D$ is smooth,
we get in light of Theorem~\ref{thm:lb},
$$\mathcal E_0(h_1\psi,\Ab;D_\rho\setminus D)\geq C_0(\kappa,H;D_\rho\setminus D)+\frac{\kappa^3}{H}{\rm
err}_\rho(\kappa)\,.$$ In light of the upper bound in
Theorem~\ref{thm:Lto0}, we have, $$\left(\int_{(D_\rho\setminus
D)\cap\Gamma}\left(|\nabla
B_0(x)|\frac{H}{\kappa^2}\right)^{1/3}\,E\left(|\nabla
B_0(x)|\frac{H}{\kappa^2}\right)\,ds(x)\right)\leq
C\frac{\kappa^2}{H}\,\rho\,.$$ In the same vein, since $g(b)$ is
bounded and vanishes when $b\geq 1$, then,
$$\int_{D_\rho\setminus
D}g\left(\frac{H}\kappa |B_0(x)|\right)\,dx\leq
C\rho\frac\kappa H\,.$$
 As a consequence, we get,
$$C_0(\kappa,H;D_\rho\setminus D)\leq C\frac{\kappa^3}{H}\,\rho\,,$$
and
$$
\mathcal E_0(\psi,\Ab;D)\leq
C_0(\kappa,H;D)+\frac{\kappa^3}{H}\Big(C\rho+{\rm
err}_\rho(\kappa)\Big)+\frac{C}{\rho^2}\frac\kappa H\\
$$
Sending $\kappa$ to infinity, we deduce that,
$$\limsup_{\kappa\to \infty}\frac{H}{\kappa^3}\left\{\mathcal E_0(\psi,\Ab;D_\rho)-C_0(\kappa,H;D)\right\}\leq C\rho\,.
$$
Next, we send $\rho$ to $0_+$ and get,
\begin{equation}\label{eq:loc-ub}
\limsup_{\kappa\to \infty}\frac{H}{\kappa^3}\left\{\mathcal E_0(\psi,\Ab;D)-C_0(\kappa,H;D)\right\}\leq 0\,.
\end{equation}
Notice that the upper bound in \eqref{eq:loc-ub} is valid for any
open set $D\subset \Omega$ with smooth boundary. In particular, it
is true when $D$ is replaced by
$\overline{D}^c=\Omega\setminus {\overline D}$, i.e.
\begin{equation}\label{eq:loc-ub'}
\limsup_{\kappa\to \infty}\frac{H}{\kappa^3}\left\{\mathcal E_0(\psi,\Ab;\overline{D}^c)-C_0(\kappa,H;\overline{D}^c)\right\}\leq 0\,.
\end{equation}
\subsection{Lower bound.}
We continue to assume that $(\psi,\Ab)$ is  a minimizer of the
functional in \eqref{eq-3D-GLf}. We will give a lower bound of the
energy $\mathcal E_0(\psi,\Ab;D)$. We plug the lower bound in
Corollary~\ref{corl:lb} into the following simple decomposition of
the energy,
$$\mathcal E_0(\psi,\Ab;D)+\mathcal E_0(\psi,\Ab;\overline{D}^c)=\mathcal
E_0(\psi,\Ab;\Omega)\,.$$ In that way, we get,
$$\mathcal E_0(\psi,\Ab;D)\geq C_0(\kappa,H;\Omega)-\mathcal
E_0(\psi,\Ab;\overline{D}^c)+\frac{\kappa^3}{H}{\rm
err}(\kappa)\,.$$ Notice the following simple decomposition of
the term on the right hand side,
$$
\mathcal E_0(\psi,\Ab;D)\geq C_0(\kappa,H;D)+\frac{\kappa^3}{H}{\rm
err}(\kappa)\\
-\left\{\mathcal
E_0(\psi,\Ab;\overline{D}^c)-C_0(\kappa,H;\overline{D}^c)\right\}\,.$$
 Now we send $\kappa$ to $\infty$ and using
\eqref{eq:loc-ub'}, we get,
\begin{equation}\label{eq:loc-lb}
\liminf_{\kappa\to \infty}\frac{H}{\kappa^3}\left\{\mathcal E_0(\psi,\Ab;D)-C_0(\kappa,H;D)\right\}\geq0\,.
\end{equation}

\subsection{Conclusion for the  local energy}\label{sec:locen}~\\
 Combining \eqref{eq:loc-ub} and \eqref{eq:loc-lb}, we get, in the two regimes we are considering,  that the local energy in $D$ of a minimizer $(\psi,\Ab)$
satisfies,
\begin{equation}\label{con74}
\mathcal
E_0(\psi,\Ab;D)= C_0(\kappa,H;D)+\frac{\kappa^3}{H}\,o(\kappa)\,,
\end{equation}
where $C_0(\kappa,H;D)$ is introduced in \eqref{eq:gse-D}.

\section{Proof of Theorem~\ref{thm:HK-loc}}
The proof of (1) in Theorem~\ref{thm:HK-loc} is a simple combination
of the upper bound in Theorem~\ref{thm:up} and the lower bound in
Theorem~\ref{thm:lb} (used with $D=\Omega$).

The assertion (2) in Theorem~\ref{thm:HK-loc} is the conclusion of
Section~\ref{sec:locen}.

The rest of the section is devoted to the proof of  statement  (3)
in Theorem~\ref{thm:HK-loc}. This will be done in three steps.
Recall the definition of the quantity $C_0(\kappa,H;D)$ in
\eqref{eq:gse-D} and that we work under the assumption on  $H$ described in Regimes~I~and~II. It is sufficient to prove that the following formula is true in Regimes~I~and~II,
$$\int_D|\psi(x) |^4\,dx=-\frac2{\kappa^2}C_0(\kappa,H;D)+\frac\kappa
H\,o(1)\,,\quad(\kappa\to\infty)\,,$$ where $(\psi,\Ab)$ is a minimizer of the energy in \eqref{eq-3D-GLf}.

\subsection*{Step 1: The case $D=\Omega$}~\\
A minimizer $(\psi,\Ab)$ satisfies the Ginzburg-Landau equation in \eqref{eq:GL}. Recall the useful identity in \eqref{eq:GLen},
$$-\frac{\kappa^2}2\int_\Omega|\psi(x) |^4\,dx=\mathcal E_0(\psi,\Ab;\Omega)
\,.$$ Thanks to the formulas in Subsection~\ref{sec:locen} used with
$D=\Omega\,$, we observe that \eqref{con74} yields,
\begin{equation}\label{eq:op}
\int_\Omega|\psi(x)|^4\,dx=-\frac2{\kappa^2}C_0(\kappa,H;\Omega)+\frac{\kappa}{H}\,o(1)\,,
\end{equation}
where the formula is valid in Regimes~I~and~II.
\subsection*{Step 2: Upper bound}~\\
Let
$$\ell=\kappa^{-1/4}\sqrt{\frac\kappa H}\quad{\rm and}\quad D_\ell=\{x\in
D~:~{\rm dist}(x,\partial D)\geq \ell\}\,.$$ Consider a cut-off
function $\chi_\ell\in C_c^\infty(D)$ such that,
$$\|\chi_\ell\|_\infty\leq1\,,\quad \|\nabla\chi_\ell\|\leq
\frac{C}{\ell}\,,\quad\chi_\ell =1{~\rm in~}D_\ell\,.$$ Multiplying
both sides of the equation in \eqref{eq:GL} by
$\chi_\ell^2\overline\psi$ then integrating by parts and using the
estimate in Proposition~\ref{thm:ob} yield,
$$\int_D\left(|(\nabla-i\kappa
H\Ab)\chi_\ell\psi|^2-\kappa^2\chi_\ell^2|\psi|^2+\kappa^2\chi_\ell^2|\psi|^4\right)\,dx=\int_D|\nabla\chi_\ell|^2|\psi|^2\,dx=\mathcal
O\Big(\,\frac{C}{\ell^2}\,\frac\kappa
H\,\Big)=\frac{\kappa^3}{H}o(1)\,.$$ Since $1\geq \chi_\ell^2\geq
\chi_\ell^4$, this formula implies,
\begin{equation}\label{eq:op-loc}
-\frac{\kappa^2}{2}\int_D\chi_\ell^2|\psi|^4\,dx\geq \mathcal
E_0(\chi_\ell\psi,\Ab;D)-\frac{\kappa^3}{H}o(1)\,.\end{equation}
Using the bounds $\|\psi\|_\infty\leq 1$ and $\|\psi\|_2\leq
C\sqrt{\frac\kappa H}$\,, the fact that $\chi_\ell$ is supported in
$D$ and $\chi_\ell=1$ in $D_\ell$, we get,
\begin{align}\label{eq:op-loc*}
\int_{D}|\psi(x)|^4\,dx&=\int_D\chi_\ell^2(x)|\psi(x)|^4\,dx+\int_D(1-\chi_\ell^2(x))|\psi(x)|^4\,dx\nonumber\\
&=\int_D\chi_\ell^2(x)|\psi(x)|^4\,dx+\mathcal
O\Big(\,\sqrt{\ell}\,\sqrt{\frac\kappa H}\,\Big)\nonumber\\
&=\int_D\chi_\ell^2(x)|\psi(x)|^4\,dx+\frac\kappa
H\,o(1)\,.\end{align} Now, we infer from \eqref{eq:op-loc} and Theorem~\ref{thm:lb},
\begin{equation}\label{eq:op-up}
\int_{D}|\psi(x)|^4\,dx\leq
-\frac2{\kappa^2}C_0(\kappa,H;D)+\frac{\kappa}{H}\,o(1)\,.
\end{equation}
\subsection*{Step 3: Lower bound}~\\
Notice that \eqref{eq:op-up} is valid for any open domain $D\subset
\Omega$ with a smooth boundary, in particular, it is valid when $D$
is replaced by the complementary of $\overline{D}$ in $\Omega$:
$\overline{D}^c$. We have the simple decomposition,
\begin{align*}
\int_{D}|\psi(x)|^4\,dx&=\int_{\Omega}|\psi(x)|^4\,dx-\int_{\overline{D}^c}|\psi(x)|^4\,dx\\
&\geq
\int_{\Omega}|\psi(x)|^4\,dx-\frac2{\kappa^2}C_0(\kappa,H;\overline{D}^c)+\frac{\kappa}{H}\,o(1)\,.
\end{align*}
Using the asymptotics in \eqref{eq:op} obtained in  Step~1, we
deduce that,
$$\int_{D}|\psi(x)|^4\,dx\geq-\frac2{\kappa^2}C_0(\kappa,H;D)+\frac{\kappa}{H}\,o(1)\,.$$
Combining this lower bound and the upper bound in \eqref{eq:op-up},
we obtain the asymptotics announced in the third assertion of
Theorem~\ref{thm:HK-loc}.

\subsection*{Acknowledgements.} The authors would like to thank S.
Fournais and N. Raymond for useful discussions. B. Helffer is
partially supported by the ANR program NOSEVOL. A. Kachmar is
partially supported by a grant from Lebanese University.

\end{document}